\documentclass{sig-alternate-2013}

\newfont{\mycrnotice}{ptmr8t at 7pt}
\newfont{\myconfname}{ptmri8t at 7pt}
%
%

%\permission{Permission to make digital or hard copies of all or part of this work for personal or classroom use is granted without fee provided that copies are not made or distributed for profit or commercial advantage and that copies bear this notice and the full citation on the first page. Copyrights for components of this work owned by others than ACM must be honored. Abstracting with credit is permitted. To copy otherwise, or republish, to post on servers or to redistribute to lists, requires prior specific permission and/or a fee. Request permissions from permissions@acm.org.}
%\conferenceinfo{SIGMETRICS'14,}{June 16--20, 2014, Austin, Texas, USA.}
%\copyrightetc{Copyright 2014 ACM \the\acmcopyr}
%\crdata{978-1-4503-2789-3/14/06\ ...\$15.00.\\
%http://dx.doi.org/10.1145/2591971.2591984}

\clubpenalty=10000 
\widowpenalty = 10000

%-------------------PACKAGES-------------
\usepackage{amsmath}    % need for subequations
\usepackage{graphics,epsfig,subfigure}    % need for figures
\usepackage{verbatim}   % useful for program listings
\usepackage{color}      % use if color is used in text
\usepackage{subfigure}  % use for side-by-side figures
\usepackage{amssymb}
\usepackage{wasysym}
\usepackage{epstopdf}
\usepackage{graphicx}
\DeclareGraphicsRule{.jpg}{eps}{.bb}{}
\usepackage{mathrsfs}

%-------------------NEWCOMMANDS--------------
\newcommand{\prob}[1]{\text{Pr}\big\{#1\big\}}
\newcommand{\expect}[1]{\mathbb{E}\big\{#1\big\}}
\newcommand{\expectm}[1]{\mathbb{E}\{#1\}}

\newcommand{\bv}[1]{{\boldsymbol{#1} }}
\newcommand{\script}[1]{{{\cal{#1} }}}

%-------------------NEWTHEOREM--------------

\newtheorem{coro}{\textbf{Corollary}}
\newtheorem{lemma}{\textbf{Lemma}}

\newtheorem{theorem}{\textbf{Theorem}}

\newtheorem{assumption}{\textbf{Assumption}}
\newtheorem{definition}{\textbf{Definition}}

\newcommand{\olac}{$\mathtt{OLAC}$}
\newcommand{\olacb}{$\mathtt{OLAC2}$}

\allowdisplaybreaks

\begin{document}

\title{%Learning-Aided Control in Stochastic Queueing Systems\\
The Power of Online Learning in Stochastic Network Optimization }
%\title{Learning-Aided Control for Stochastic Neworked Systems}
\author{ Longbo Huang$^\star$, Xin Liu$^\dagger$, Xiaohong Hao$^\star$\\
$^\star$\{longbohuang  haoxh10\}@tsinghua.edu.cn, IIIS, Tsinghua University\\
$^\dagger$liuxin@microsoft.com, Microsoft Research Asia
}

\markboth{Draft}{Huang}

\maketitle

\begin{abstract}
%In this paper, we propose a joint learning and control approach for optimizing performance in stochastic queueing networks. Our approach is motivated by the recently developed Lyapunov optimization technique and utilizes the connection between queueing network control and Lagrange multiplier learning. We show that by incorporating a learning phase, which estimates an optimal Lagrange multiplier for a carefully constructed deterministic optimization problem, algorithms constructed based on Lyapunov optimization can achieve near-optimal utility-delay tradeoff for general queueing networks. Moreover, the learning-aided algorithms process faster convergence speed compared to the ones that do not utilize such information.

In this paper, we  investigate the power of online learning in  stochastic network optimization with unknown system statistics {\it a priori}. We are interested in understanding how information and learning can be efficiently incorporated into system control techniques, and what are the fundamental benefits of doing so. We propose two \emph{Online Learning-Aided Control} techniques, $\mathtt{OLAC}$ and $\mathtt{OLAC2}$, that explicitly utilize the past system information in current system control via a learning procedure called \emph{dual learning}. We prove strong performance guarantees of the proposed algorithms: $\mathtt{OLAC}$ and $\mathtt{OLAC2}$ achieve the near-optimal $[O(\epsilon), O([\log(1/\epsilon)]^2)]$ utility-delay tradeoff and $\mathtt{OLAC2}$ possesses an  $O(\epsilon^{-2/3})$ convergence time. Simulation results also confirm the superior performance of the proposed algorithms in practice. 
To the best of our knowledge, $\mathtt{OLAC}$ and $\mathtt{OLAC2}$ are  the first algorithms that simultaneously possess explicit near-optimal delay guarantee and sub-linear convergence time, and our attempt is the first to explicitly incorporate  online learning into stochastic network optimization and to demonstrate its power in both theory and practice. 
%
%Besides our algorithm
\end{abstract}
%\begin{keywords}
%Online learning, Queueing, Lagrange Multiplier, Optimal Control, Lyapunov Optimization, network optimization
%\end{keywords}

%\section{Introduction}
% this is the introduction
\section{Introduction} \label{section:intro}
Consider the following constrained network optimization problem: We are given a stochastic networked system with dynamic system states that have a stationary state distribution. At each state, an operation is implemented and a corresponding system cost occurs depending on the chosen actions. The objective is to minimize the expected cost given service/demand constraints.
Such a constrained optimization framework in stochastic systems is general and models many practical  application scenarios, such as in computer networks, smart grids, supply chain management, and transportation networks.
%Because the optimal control of stochastic networked systems has a wide range of application scenarios, and thus
Due to this wide applicability, developing efficient control techniques for this framework in stochastic  systems has been one central question in network optimization.
However, solving this problem is very challenging and the main difficulty comes from the fact that the state distribution of the system is often unknown {\it a priori}. Moreover, known algorithms that handle this challenge either do not admit explicit delay guarantee, or suffer from a slow convergence speed. 
% and may change over time.
To address such a challenge and to overcome the previous limitations, in this paper, we investigate the value of online learning in optimal stochastic system control. %Our work is motivated by the ignorance of the
We are interested in understanding how information and learning can be efficiently incorporated into system control techniques, and what are the fundamental benefits of doing so.

Specifically, we propose two \emph{Online Learning-Aided Control} techniques, $\mathtt{OLAC}$ and $\mathtt{OLAC2}$. The two new techniques are inspired by
%the Lyapunov technique, also known as Backpressure \cite{neelynowbook}, and
the following key aspect of the recently developed Lyapunov technique (also known as Backpressure  \cite{neelynowbook}): %, but simultaneous the main reason of its known undesired latency performance and slow convergence speed.
%contributes to its power and results in a slow convergence speed.
%The Lyapunov technique
the Lyapunov technique {converts the problem of finding optimal system control decisions into learning the optimal Lagrange multiplier of an underlying optimization problem in an incremental manner, i.e., at every time, the technique reacts only to the instantaneous system condition}.
This conversion allows the queues in the system to play the role of Lagrange multipliers, and the incremental nature eliminates the need for knowing the statistical information of the system.
Because of this attractive feature, the Lyapunov technique has received much attention in the literature. However, the main limitation of the  technique is that under Lyapunov algorithms, the queue size  in the system has to build up gradually, which  results in a large system delay. Specifically, in order to achieve an $O(\epsilon)$ close-to-optimal performance, the queue size has to be $\Theta(1/\epsilon)$, which is undesirable when $\epsilon$ is small.

In comparison, $\mathtt{OLAC}$ and $\mathtt{OLAC2}$ explicitly utilize the system information via a learning procedure called \emph{dual learning}, which effectively integrates the past system information into current system control by solving an \emph{empirical} optimal Lagrange multiplier. By doing so, $\mathtt{OLAC}$ and $\mathtt{OLAC2}$ convert the problem of optimal control into a combination of stochastic approximation and statistical learning. Note that this is a challenging task, because dual learning introduces another coupling effect in time to stochastic system algorithm performance analysis, which itself is already highly non-trivial due to the complicated interactions among different system components.

%convergence time

To the best of our knowledge, this is the first attempt to explicitly incorporate the power of online learning in stochastic network optimization. We address a general model of stochastic network optimization with unknown system statistics {\it a priori}. Specifically,
we make the following contributions in this paper:   %  \xl{Longbo, please add more in the following to make it stronger and more explicit. ===== }

\begin{itemize}
\vspace{-.08in}
\item We propose two Online-Learning-Aided Control algorithms, $\mathtt{OLAC}$ and $\mathtt{OLAC2}$, which explicitly utilize the system information via online learning to integrate the past system information into current system control. $\mathtt{OLAC}$ and $\mathtt{OLAC2}$ apply to all scenarios where Backpressure applies. Moreover, they retain all the attractive features of Backpressure, such as having low computational complexities, and assuming zero  a priori statistical information. Therefore, they can be applied to large-scale dynamic network systems. Simulation results also demonstrate the empirical effectiveness of our proposed algorithms.

%, and retain all the advantages Backpressure has, i.e., they are of low computational complexity and do not require any statistical information.

% and achieve the near-optimal $[O(\epsilon), O(1/\epsilon)]$ utility-delay tradeoff.
\vspace{-.08in}
\item We prove strong performance guarantees for the proposed algorithms, including near-optimality of the control policies, as well as sub-linear convergence time. Specifically, we show that $\mathtt{OLAC}$ and $\mathtt{OLAC2}$ achieve the near-optimal $[O(\epsilon), O([\log(1/\epsilon)]^2)]$ utility-delay tradeoff and $\mathtt{OLAC2}$ possesses a convergence time of $O(\epsilon^{-2/3})$, which significantly improves upon the $\Theta(1/\epsilon)$ convergence time of the Lyapunov technique. This is probably the first result that simultaneously achieves near-optimal utility-delay tradeoffs and sub-linear convergence time, and demonstrates the power of  online learning in stochastic network optimization. 

\vspace{-.08in}
\item We develop two analytical techniques, \emph{dual learning} and the \emph{augmented problem},  for  algorithm design and performance analysis. Dual learning allows us to connect dual subgradient update convergence to statistical convergence  of random system states, whereas the augmented problem  enables the interplay between Lyapunov drift analysis and duality in analyzing systems that apply time-inhomogeneous control policies.
These techniques may be applicable to solving other network control problems.

\vspace{-.06in}
%First, we introduce the idea of
%The development of our results is non-trivial. The development of the two algorithms is
%Furthermore, in our analysis, we introduce

%We also demonstrate how dual

%For developing the performance results and the convergence results, we introduce the notion of  \emph{augmented}

 %is highly non-trivial... It

%\item In practice, the proposed algorithms

\end{itemize}

The rest of the paper is organized as follows. In Section \ref{section:examples}, we discuss a few representative examples of stochastic network optimization in diverse application fields and the related works. We then explain how our results advance the state-of-the-art.  We set up our notations in Section \ref{section:notation}. We present the  system model and problem formulation in Section \ref{section:model}, and background information in Section \ref{section:review}. We present $\mathtt{OLAC}$ and $\mathtt{OLAC2}$ in Section \ref{section:algorithm}, and prove their optimality in Section \ref{section:performance}, and convergence in Section \ref{section:convergence}. Simulation results are presented in Section \ref{section:simulation}, followed by conclusions in Section \ref{section:conclusion}.

%\section{Motivating Examples}
% this one contains the motivating example
\section{Motivating Examples}\label{section:examples}
% Consider a stochastic system with stationary state distributions. At each state, an operation cost occurs depending on its chosen actions. The objective is to minimize the expected cost given service/demand constraints.
%Such a constrained optimization in stochastic systems has wide application scenarios.
% In the following, we present a few representative examples in diverse application fields and explain how our main results can be applied.

 In the following, we present a few representative examples of stochastic network optimization in diverse application fields and the related works, and explain how our main results can be applied.

\textbf{Wireless Networks}
Consider the following  simplified scheduling problem in cellular networks. A cellular base station (BS) is transmitting data to a mobile user. The channel (i.e., state) between the user and the BS is time varying,  and the cost (e.g., energy) and the transmission rate depend on the channel state.  The user has a certain arrival rate, and thus the constraint is that the  service rate provided by the BS to the user has to be larger than the arrival rate. The objective is to minimize the expected energy consumption, where the expectation is taken over channel distributions, subject to the rate constraint. This  example can be generalized in practice, e.g., to include multiple users, multiple hops, multiple transmission rates, various coding/modulation schemes,  multiple constraints, and different objectives, e.g., \cite{eryilmaz_qbsc_ton07}, \cite{rahulneelycognitive}. 

\textbf{Smart Grids} Consider the following demand response problem with renewable energy sources in a smart grid. The problem is to allocate renewable energy sources (e.g., solar or wind) to  flexible consumers (e.g., an EV to be  charged  or a dishwasher load to be finished). In this case, the renewable energy source
 provides energy according to a time-varying supply process (state). When the renewable energy source cannot generate enough energy to serve all customer load, it incurs  a cost to draw
energy from the regular power grid, which has a time-varying  price (state). The objective is to minimize the 
time average cost of using the regular grid (and hence results in the most efficient utilization of the renewable source). The constraint is that the average service rate has to be larger than the arrival rate of the consumer demand. Various related issues can also be considered here, including demand response \cite{opt-grid-tariff}, deferrable load scheduling \cite{neely-grid-10}, and energy storage management \cite{rahulneely-storage}.

\textbf{Supply Chain Management} Consider the following inventory control problem in supply chain management. A manufacturing plant  purchases
raw materials to assemble products and then sells the final products to customers. There are $K$ types of raw materials, each with a time-varying price (state) and $N$ types of final products, each with a time varying demand (state). At each time instance, the plant needs to decide whether to re-stock each type of raw materials and how to price each final product based on the current and future customer demands and material price. The objective is to maximize profit. In the case of large $K$ and $N$,  approximate dynamic programming solutions \cite{bertsekasdptbook} and  efficient Lyapunov optimization solutions have been proposed \cite{neelyhuang_assembly}. Other issues are studied in  general processing networks
\cite{dai-maxweight-spn}, \cite{jiang-spn}, with applications to semiconductor wafer fabrication facilities and assembly line control.

\textbf{Transportation Networks} Consider the traffic signal control problem in a transportation network. The operator controls each set of traffic signals at traffic intersections to regulate the traffic flow rate in the network.
Vehicles enter the network in a random fashion (state), with a constant average rate. They move in the network following  pre-specified average turn ratios. The objective is to stabilize the queue and allow vehicles to move as fast as possible. Such a system can be modeled as a ``store and forward"
queuing network. Feedback policies based on queue measurements have been extensively studied \cite{varaiya-bp-road}, as well as Backpressure based decentralized schemes \cite{le-signal-bp}.

\textbf{Solutions}
The above examples demonstrate the wide application scenarios of the stochastic optimization problem we consider in this paper. 
%Next, we use the first  example, i.e., the scheduling problem in cellular networks, to discuss existing solutions and how our results advance the state-of-the-art. Recall that the objective  is to minimize the expected energy cost subject to the rate constraint. We can state it as:
%\vspace{-.1in}
%\begin{eqnarray}
%\min &  \sum_{\mbox{states}} \mbox{Prob}(\mbox{state $i$})  \mbox{Cost}(\mbox{state $i$}) \\ \nonumber
%\mbox{s.t.} & \mbox{average service rate} \geq \mbox{arrival rate} \label{eq:cellularscheduling},
%\end{eqnarray}
%where the minimization is taken over all possible actions (transmit or not) at all channel states, including randomized policies. The cost at state $i$ is the corresponding energy cost if the action is to transmit; and is zero if the action is to not transmit. 
% 
If  the distribution of the system state is known or if the system is static, many algorithms have been proposed using flow-based optimization techniques, e.g.,   \cite{low-flow-control}, \cite{layering-chiang07}, and the references therein. 
%one can simply calculate an optimal policy. Indeed,  on network optimization that concerns solving this problem,  
However, flow-based schemes typically do not explicitly characterize network delay and algorithm convergence time. 
%The optimal policy in this  simplified example can be shown to be a threshold policy, where the action is to transmit if the cost to transmit at that channel state is no larger than the threshold (with appropriate randomization). %This structure can be obtained by introducing a Lagrange multiplier into the constraint.

For general stochastic settings, 
%On the other hand, in a general stochastic system, the main challenge for solving this problem is that  the channel state distribution is typically unknown a priori. Thus, the optimal policy cannot be directly calculated.
%
Backpressure algorithms address the challenge of unknown channel state distributions by gradually building up the queues and use them for optimal decision making, e.g., \cite{huangneelypricing-ton}, \cite{rahulneelycognitive}. %Roughly speaking,  if the weighted queue size is larger than the current cost, the action is to transmit. As the queue builds up, with an appropriate large weight, the solution converges to the optimal (asymptotically). Here, the  queue (with appropriate normalization) plays the role of Lagrange multipliers in the above optimization problem. %The BP algorithm does not assume prior knowledge and can adapt to system changes in practice.
%In addition, Backpressure also has low complexity, and allows one to obtain strong theoretical performance guarantees for the algorithms constructed, even for complex systems.
However, Backpressure is known for its slow convergence and long delay,  because the queues have to be large enough for achieving near-optimal performance. Specifically, for achieving an $O(\epsilon)$ near-optimality, an $\Theta(1/\epsilon)$ queue size is required. 
There have been recent works trying to obtain improved utility-delay tradeoff for stochastic systems. For instance,  \cite{neelysuperfast}, \cite{huangneely_dr_tac}, and \cite{huang-lifo-ton} propose algorithms that can achieve the $[O(\epsilon), O([\log(1/\epsilon)]^2)]$ tradeoff. However, all the above algorithms require a convergence time of $\Theta(1/\epsilon)$. Moreover, they typically require additional system knowledge, e.g., system slack, for algorithm design, which  adds to the complexity of algorithm implementation. 
%Specifically, the Lyapunov optimization technique, also known as Backpressure, has received much attention in networked system control, because it does not  require statistical information of the changing network conditions and handles dynamics in systems very well \cite{addref}. Furthermore, it allows one to obtain strong theoretical performance guarantees for the algorithms constructed, even for complex systems.

Our learning-based approach overcomes these limitations. In particular, we use an online learning-based approach to take advantage of the historic state information, which is ignored by Backpressure throughout the system control process. 
Our approach is based on a novel \emph{dual learning} idea, which computes an empirical Lagrange multiplier based on the empirical distribution of the system states. Then, we include  the learned Lagrange multiplier to the network queues for decision making. 
%Our approach operates as follows: based on each observation of channel states, we  build an increasingly accurate  histogram of the channel state. We calculate an optimal policy based on this (inaccurate) empirical distribution in each step and use the corresponding Lagrangian as a  ``virtual" value in the queue calculation. 
Two advantages manifest themselves in this learning-based approach. First, using the distribution information gradually learned in the system, we can significantly speed up the convergence to the optimal solution. Second, because of the ``virtual" value added to the queue, i.e., the empirical Lagrange multiplier,  the actual queue size is significantly smaller than that under Backpressure. Specifically, we develop two Online Learning-Aided Control techniques, $\mathtt{OLAC}$ and $\mathtt{OLAC2}$. We show that $\mathtt{OLAC}$ and $\mathtt{OLAC2}$ achieve the  $[O(\epsilon), O([\log(1/\epsilon)]^2)]$ tradeoff and $\mathtt{OLAC2}$ possesses a convergence time of $O(\epsilon^{-2/3})$, which significantly outperforms that of Backpressure.

\section{Notations}\label{section:notation}
$\mathbb{R}^n$ denotes the $n$-dimensional Euclidean space. $\mathbb{R}^n_+$ and $\mathbb{R}^n_-$ denote the non-negative and non-positive orthant. Bold symbols $\bv{x}=(x_1, ..., x_n)$ denote vectors in $\mathbb{R}^n$. The notion $w.p.1$ denotes ``with probability $1$.''  $\|\cdot\|$ denotes the Euclidean norm. For a sequence of variables $\{y(t)\}_{t=0}^{\infty}$, we also use $\overline{y}=\lim_{t\rightarrow\infty}\frac{1}{t}\sum_{\tau=0}^{t-1}\expect{y(\tau)}$ to denote its average (when exists). $\bv{x}\succeq\bv{y}$ means that $x_j\geq y_j$ for all $j$. 

%$\vspace{-.2in}$
\section{System Model and Problem Formulation}\label{section:model}
In this section, we specify the general network model. We consider a network controller that operates a network with the goal of minimizing the time average cost, subject to the queue stability constraint. The network is assumed to operate in slotted time, i.e., $t\in\{0,1,2,...\}$. We assume there are $r\geq1$ queues in the network (e.g., the amount of data to be transmitted in cellular networks or the amount of flexible jobs to be scheduled in a smart grid).

%$\vspace{-.25in}$
\subsection{Network State}\label{section:state}
In every slot $t$, we use $S(t)$ to denote the current network state, which indicates the current network parameters, such as a vector of conditions for each network link, or a collection of other relevant information about the current network channels and arrivals. %\xl{arrivals?}
We assume that $S(t)$ is i.i.d.~over time and takes $M$ different random network states denoted as $\script{S} = \{s_1, s_2, \ldots, s_M\}$. \footnote{The results in this paper can likely be generalized to systems with more general Markovian dynamics.} Let $\pi_{s_i}=\prob{S(t)=s_i}$ denote the probability of being in state $s_i$ at time $t$ and denote $\bv{\pi}=(\pi_{s_1}, ..., \pi_{s_M})$ the stationary distribution.
 %It is easy to see in this case that $\pi_{s_i}>0$ for all $s_i$.
We assume that the network controller can observe $S(t)$ at the beginning of every slot $t$, but the $\pi_{s_i}$ probabilities are unknown.

%$\vspace{-.22in}$
\subsection{The Cost, Traffic, and Service}\label{subsection:costtrafficservice}
At each time $t$, after observing $S(t)=s_i$, the controller chooses an action $x(t)$ from a set $\script{X}^{(s_i)}$, i.e., $x(t)= x^{(s_i)}$ for some $x^{(s_i)}\in\script{X}^{(s_i)}$. The set $\script{X}^{(s_i)}$ is called the feasible action set for network state $s_i$ and is assumed to be time-invariant and compact for all $s_i\in\script{S}$.  The cost, traffic, and service generated by the chosen action $x(t)=x^{(s_i)}$ are as follows:
\begin{enumerate}
\vspace{-.06in}
\item[(a)] The chosen action has an associated cost given by the cost function $f(t)=f(s_i, x^{(s_i)}): \script{X}^{(s_i)}\mapsto \mathbb{R}_+$ (or $\script{X}^{(s_i)}\mapsto\mathbb{R}_-$ in reward maximization problems);
\vspace{-.06in}
\item[(b)] The amount of traffic generated by the action to queue $j$ is determined by the traffic function $A_j(t)=A_{j}(s_i, x^{(s_i)}): \script{X}^{(s_i)}\mapsto \mathbb{R}_{+}$, in units of packets;
\vspace{-.06in}
\item[(c)] The amount of service allocated to queue $j$ is given by the rate function $\mu_j(t)=\mu_{j}(s_i, x^{(s_i)}): \script{X}^{(s_i)}\mapsto \mathbb{R}_{+}$, in units of packets.
\vspace{-.06in}
 \end{enumerate}
Note that $A_j(t)$ includes both the exogenous arrivals from outside the network to queue $j$, and the endogenous arrivals from other queues, i.e., the transmitted packets from other queues, to queue $j$. We assume the functions $f(s_i, \cdot)$, $\mu_{j}(s_i, \cdot)$ and $A_{j}(s_i, \cdot)$ are time-invariant, their magnitudes are uniformly upper bounded by some constant $\delta_{\max}\in(0,\infty)$ for all $s_i$, $j$, and they are known to the network operator.

%In the following, we use $\bv{A}(t)=(A_1(t), A_2(t), ..., A_r(t))^{T}$ and $\bv{\mu}(t)=(\mu_1(t), \mu_2(t), ..., \mu_r(t))^{T}$ to denote the arrival and service vectors at time $t$. It is easy to see from above that if we define:
%\begin{eqnarray}
%B=\sqrt{r}\delta_{\max},\label{eq:Bdef}
%\end{eqnarray}
%then $\|\bv{A}(t)-\bv{\mu}(t)\|\leq B$ for all $t$.

%$\vspace{-.2in}$
\subsection{Problem Formulation}
%Queueing, Average Cost, and the Stochastic Problem}
\label{section:queuenotation}
Let $\bv{q}(t)=(q_1(t), ..., q_r(t))^T\in\mathbb{R}^r_{+}$, $t=0, 1, 2, ...$ be the queue backlog vector  process of the network, in units of packets. We assume the following queueing dynamics: %\footnote{Note that (\ref{eq:queuedynamic}) is slightly different from $U_j(t+1)=\max[U_j(t)-\mu_j(t), 0]+A_j(t)$ used in previous QLA algorithms. But it can easily be shown that all the results in this paper hold for the latter queueing dynamic as well.}
\begin{eqnarray}
q_j(t+1)=\max\big[q_j(t)-\mu_j(t)+A_j(t), 0\big], \quad\forall j,\label{eq:queuedynamic}
\end{eqnarray}
and $\bv{q}(0)=\bv{0}$. By using (\ref{eq:queuedynamic}), we assume that when a queue does not have enough packets to send, null packets are transmitted, so that the number of packets entering $q_j(t)$ is equal to $A_j(t)$.  In this paper, we adopt the following notion of queue stability \cite{neelynowbook}:
\vspace{-.06in}
\begin{eqnarray}
\overline{q}_{\text{av}}\triangleq
\limsup_{t\rightarrow\infty}\frac{1}{t}\sum_{\tau=0}^{t-1}\sum_{j=1}^{r}\expect{q_j(\tau)}<\infty.\label{eq:queuestable}
\end{eqnarray}
We use $\Pi$ to denote an action-choosing policy. Then,  we use $f^{\Pi}_{\text{av}}$ to denote the time average cost induced by $\Pi$, i.e., 
\vspace{-.06in}
\begin{eqnarray}
f^{\Pi}_{\text{av}}\triangleq
\limsup_{t\rightarrow\infty}\frac{1}{t}\sum_{\tau=0}^{t-1}\expect{f^{\Pi}(\tau)},\label{eq:timeavcost}
\end{eqnarray}
where $f^{\Pi}(\tau)$ is the cost incurred at time $\tau$ by policy $\Pi$. We call an action-choosing  policy \emph{feasible} if at every time slot $t$ it only chooses actions from the feasible action set $\script{X}^{(S(t))}$.  We then call a feasible action-choosing  policy under which (\ref{eq:queuestable}) holds a \emph{stable} policy, and use $f_{\text{av}}^*$ to denote the optimal time average cost over all stable policies.
%We call an action-seeking  policy under which (\ref{eq:queuestable}) holds a \emph{stable} policy, and use $f_{\text{av}}^*$ to denote the optimal time average cost over all stable policies.
%$\vspace{-.3in}$
%\subsection{The Stochastic  Problem}

In every slot, the network controller observes the current network state and chooses a control action, with the goal of minimizing the time average cost subject to network stability. This goal can be mathematically stated as:
\begin{eqnarray*}
\textbf{(P1)}\,\,\, \bv{\min: \,  f^{\Pi}_{\text{av}}, \,\, \text{s.t.}\,  (\ref{eq:queuestable})}.
\end{eqnarray*}
In the following, we call \textbf{(P1)} \emph{the stochastic problem}. It can be seen that the examples in Section \ref{section:examples} can all be modeled with the stochastic problem framework. This is the problem formulation  we focus on in this paper.

%\subsection{Discussion of the model}
%Our formulation models many problems in general stochastic networked systems. For instance, mobile networks \cite{}, computer networks \cite{neely}, cloud computing platform \cite{}, transportation networks \cite{}, and power systems \cite{}. In these systems, the statistics of the system dynamics is typically unknown, and the system resources are typically shared by many components. Hence, developing efficient techniques for solving the stochastic problem under this model can greatly benefit algorithm design in such systems.

%This ensures that the techniques we develop have a wide range of applications.
%\textcolor{red}{add references}

\vspace{-.02in}
\section{The Deterministic Problem and Backpressure}\label{section:review}
In this section, we first define the \emph{deterministic problem} and its dual problem, which will be important for designing our new control techniques and for our later analysis. We then review the Lyapunov technique   for solving the stochastic problem \textbf{(P1)}. To follow the convention, we will call it the Backpressure algorithm.

\subsection{The deterministic problem}
The \emph{deterministic problem} is  defined as follows \cite{huangneely_dr_tac}:
\begin{eqnarray}
\hspace{-.2in}&& \min: F(\bv{x}, \bv{\pi})\triangleq V\sum_{s_i}\pi_{s_i}f(s_i, x^{(s_i)})\label{eq:primal}\\
\hspace{-.2in}&&\quad\text{s.t.}\,\,\,H_j(\bv{x}, \bv{\pi})\\
\hspace{-.2in} && \qquad\qquad\triangleq\sum_{s_i}\pi_{s_i} [ A_j(s_i, x^{(s_i)})- \mu_j(s_i, x^{(s_i)})] \leq 0,\,\,\forall\, j,\nonumber\\
\hspace{-.2in}&& \qquad\quad  x^{(s_i)}\in \script{X}^{(s_i)}\quad \forall\, i=1, 2, ..., M. \nonumber
\end{eqnarray}
Here  the minimization is taken over $\bv{x}\in\prod_i\script{X}^{(s_i)}$, 
where $\bv{x}=(x^{(s_1)}, ..., x^{(s_M)})^T$, and $V\geq1$ is a positive constant introduced for later analysis. The dual problem of (\ref{eq:primal}) can be obtained as follows:
\begin{eqnarray}
\max:\,\,\, g(\bv{\gamma}),\quad \text{s.t.}\,\,\, \bv{\gamma}\succeq\bv{0},\label{eq:dualproblem}
\end{eqnarray}
where $g(\bv{\gamma})$ is the dual function and is defined as:
\vspace{-.06in}
\begin{eqnarray}
\hspace{-.3in}&&g(\bv{\gamma})=\inf_{x^{(s_i)}\in \script{X}^{(s_i)}}\sum_{s_i}\pi_{s_i}\bigg\{Vf(s_i, x^{(s_i)})\label{eq:dual_separable}\\
\hspace{-.3in}&&\qquad\qquad\qquad\qquad+\sum_j\gamma_j\big[A_j(s_i, x^{(s_i)})- \mu_j(s_i, x^{(s_i)})\big]\bigg\}.\nonumber
\end{eqnarray}
Here $\bv{\gamma}=(\gamma_1, ..., \gamma_r)^T$ is the \emph{Lagrange multiplier} of (\ref{eq:primal}). It is well known that $g(\bv{\gamma})$ in (\ref{eq:dual_separable}) is concave in the vector $\bv{\gamma}$ for all $\bv{\gamma}\in\mathbb{R}^r$, and hence the problem (\ref{eq:dualproblem}) can usually be solved efficiently, particularly when the cost functions and rate functions are separable over different network components  \cite{bertsekasoptbook}.

Below, we use $\bv{\gamma}^*=(\gamma^*_{1}, \gamma^*_{2}, ..., \gamma^*_{r})^T$ to denote an optimal solution of the problem (\ref{eq:dualproblem}). For notational convenience, we also use $g_0(\bv{\gamma})$ and $\bv{\gamma}_0^*$ to denote the dual function and an optimal dual solution for $V=1$. It can be seen that:
\vspace{-.06in}
\begin{eqnarray}
g(\bv{\gamma}) = Vg_0(\bv{\gamma}/V), \label{eq:dual-relationship}
\end{eqnarray}
which implies that $\bv{\gamma} = V\bv{\gamma}_0^*$ is an optimal solution of $g(\bv{\gamma})$. 
Note that $g_0(\bv{\gamma})$ is independent of $V$. For our later analysis, we also define:
\vspace{-.06in}
\begin{eqnarray}
\hspace{-.3in}&&g_{s_i}(\bv{\gamma})=\inf_{x^{(s_i)}\in \script{X}^{(s_i)}} \bigg\{Vf(s_i, x^{(s_i)})\label{eq:dual_single}\\
\hspace{-.3in}&&\qquad\qquad\qquad\qquad+\sum_j\gamma_j\big[A_j(s_i, x^{(s_i)})- \mu_j(s_i, x^{(s_i)})\big]\bigg\},\nonumber
\end{eqnarray}
to be the dual function when there is only a single state $s_i$. It is clear from equations (\ref{eq:dual_separable}) and (\ref{eq:dual_single}) that:
%We see that:
\begin{eqnarray}
g(\bv{\gamma}) = \sum_{s_i}\pi_{s_i}g_{s_i}(\bv{\gamma}). \label{eq:sum-dual}
\end{eqnarray}

\subsection{The Backpressure algorithm}
%One major challenge in solve the stochastic problem is that the system state distribution $\bv{\pi}$ is often unknown a prior.
Among the many techniques developed for solving the stochastic problem, the Backpressure algorithm has received much attention because (i) it does not require any statistical information of the changing network conditions, (ii) it has low implementation complexity, and (iii) it has provable strong performance guarantees. 
The Backpressure algorithm works as follows  \cite{neelynowbook}. \footnote{A similar definition of Backpressure based on fluid model was also given in Section 4.8 of \cite{meyn-ccn}.}
%\vspace{-.05in}
%\begin{algorithm}

\underline{\textsf{Backpressure:}} At every time slot $t$, observe the current network state $S(t)$ and the backlog $\bv{q}(t)$. If $S(t)=s_i$, choose $x^{(s_i)}\in\script{X}^{(s_i)}$ that solves the following:
\vspace{-.06in}
\begin{eqnarray}
\hspace{-.3in}\max: && -Vf(s_i, x)+\sum_{j=1}^{r}q_j(t)\big[\mu_j(s_i, x)-A_j(s_i, x)\big]\label{eq:QLAeq}\\
\text{s.t.} && x\in\script{X}^{(s_i)}.\nonumber\qquad\Diamond
\end{eqnarray}
%\end{algorithm}
%\vspace{-.15in}

In many problems, (\ref{eq:QLAeq}) can usually be decomposed into separate parts that are easier to solve, e.g., \cite{huangneelypricing-ton}, \cite{rahulneelycognitive}.
%It has been shown in \cite{neelynowbook} that if the network state $S(t)$ is i.i.d., then QLA achieves the following:
Also, when the network state process $S(t)$ is i.i.d., it has been shown in \cite{neelynowbook} that,
\begin{eqnarray}
f_{\text{av}}^{\mathtt{BP}}=f^*_{\text{av}}+O(1/V),\quad \overline{q}^{\mathtt{BP}}=O(V),\label{eq:qla_performance}
\end{eqnarray}
where $f_{\text{av}}^{\mathtt{BP}}$ and $\overline{q}^{\mathtt{BP}}$ are the expected average cost and the expected average network backlog size under Backpressure, respectively.
Note that the performance results in (\ref{eq:qla_performance}) hold under Backpressure with \emph{any} queueing discipline for choosing which packets to serve  and for any $V$.

Though being a low-complexity technique that possesses wide applicability, the delay performance and the convergence speed of Backpressure are not  satisfactory. Indeed, it is known that when Backpressure achieves a utility that is within $O(\epsilon)$ of the optimal, the average queueing delay is $\Theta(1/\epsilon)$. Although  techniques proposed in \cite{huangneely_dr_tac} \cite{huang-lifo-ton} are able to achieve an $O([\log(1/\epsilon)]^2)$ delay, it has been observed that the convergence time of these algorithms is  $\Theta(1/\epsilon)$ (this will also be proven in Section \ref{section:convergence}).
%

%\huang{rewrite this subsection to state the motivation clearly}
Moreover, we make the following observation of Backpressure: it discards all past information of the system states, i.e., $\{S(0), ..., S(t-1)\}$, and only reacts to the \emph{instantaneous} state. While such an \emph{incremental} manner is known to be able to accelerate the convergence of the algorithm compared to the ordinary subgradient methods \cite{bertsekasoptbook},  one interesting question to ask is whether such information can be utilized to construct better control techniques? Specifically, we are interested in understanding  \emph{how the  information can be incorporated into algorithm design and whether this incorporation enables the development of algorithms that possess better delay and convergence performance.}

In our next section, we present two learning-aided control techniques that perform learning through the  historic system state information.
As we will see, this learning step allows us to achieve a near-optimal system performance and a significant improvement in convergence speed.

\section{Online Learning-Aided Control}\label{section:algorithm}
In this section, we describe our online learning-aided system control idea and two novel control schemes, which we call \textsf{Online Learning-Aided Control} ($\mathtt{OLAC}$) and $\mathtt{OLAC2}$.

\subsection{Intuition}
Here we provide intuitions behind the two techniques. Both $\mathtt{OLAC}$ and $\mathtt{OLAC2}$ are motivated by the fact  that, under Backpressure algorithms, the queue vector plays the role of Lagrange multiplier \cite{huangneely_dr_tac}.  However, Backpressure's incremental nature ignores the possibility of utilizing the  information of the system dynamics for  ``accelerating'' the convergence of the control algorithm, and only relies on taking subgradient-type updates.  $\mathtt{OLAC}$ and $\mathtt{OLAC2}$ are designed to simultaneously take both subgradient-type updates and statistical learning into consideration.

\begin{figure}[ht]
\begin{center}
%\vspace{-.06in}
\includegraphics[width=3.2in, height=1.8in]{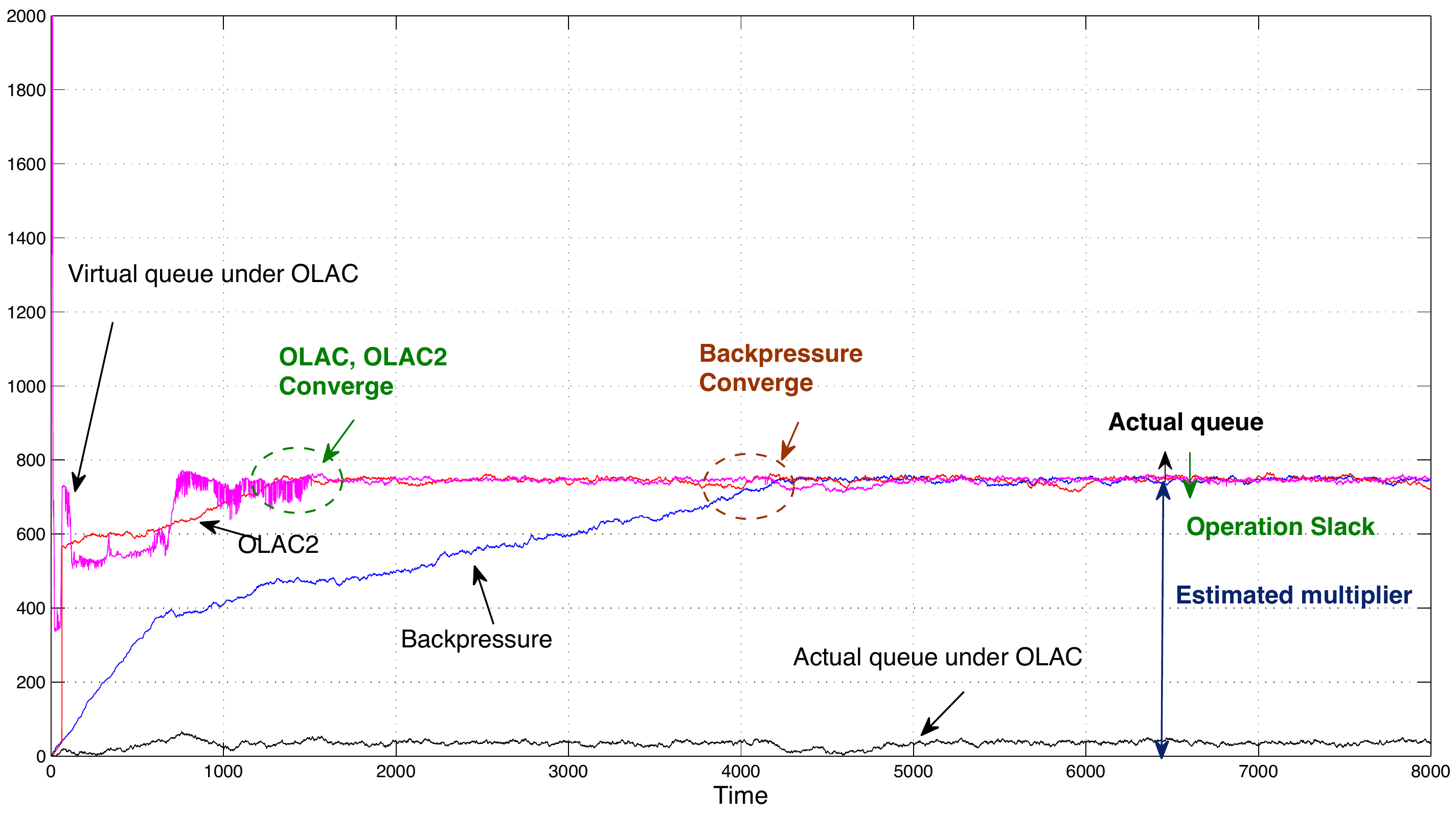}
\vspace{-.1in}
\caption{Convergence behavior of queue sizes under three algorithms, Backpressure, $\mathtt{OLAC}$ and $\mathtt{OLAC2}$. It can be seen clearly that the virtual queue under $\mathtt{OLAC}$ and the actual queue under $\mathtt{OLAC2}$ converge much faster to $\bv{\gamma}^*$ compared to the queue size under Backpressure. This clearly demonstrates the power of dual learning. }
\label{fig:intuition}
\end{center}
\vspace{-.2in}
\end{figure}
Fig. \ref{fig:intuition} best demonstrates our intuition and results.  As we can see from the figure, the queue vector under Backpressure is attracted to the fixed point $\bv{\gamma}^*$ after some time \cite{huangneely_dr_tac}. However, Backpressure only uses the queue value to track the value $\bv{\gamma}^*$. This results in undesired delay and convergence  performance. 
In $\mathtt{OLAC}$ and $\mathtt{OLAC2}$, we instead introduce an auxiliary variable $\bv{\beta}(t)$ to approach the value of $\bv{\gamma}^*$  by solving an ``empirical'' version of (\ref{eq:dual_separable}) and finding the empirical Lagrange multiplier (called \emph{dual learning}). The reason for performing this dual learning step is motivated by the fact that in many stochastic control problems, the optimal Lagrange multiplier $\bv{\gamma}^*$ is the unique key information  for finding the optimal control policies. 
Then, we feed  $\bv{\beta}(t)$  to the Backpressure controller for choosing the actions. Since $\bv{\beta}(t)$ eventually converges to $\bv{\gamma}^*$, we essentially do not require building up the queues for tracking $\bv{\gamma}^*$. Also, as we will see, at the beginning, dual learning provides a crude but fast learning of $\bv{\gamma}^*$. Hence, by doing so, we can significantly improve the convergence time of the system. 
% 

%It turns out that by doing so, one simultaneously 
%Hence, it is able to achieve both good performance and fast convergence. On the other hand, $\mathtt{OLAC2}$ adopts a more ``conservative'' dual learning step and relies on the potential ``jump'' to accelerate convergence (See Fig. \ref{fig:convergence-lac2}). 

%Our idea is to use the accumulated system state information to learn an \emph{empirical} Lagrange multiplier (called \emph{dual learning}). Then, we include this empirical  multiplier into system decision making.
%for learning the optimal Lagrange multiplier of the deterministic problem.

\subsection{Dual Learning}
The dual learning step is a novel and critical component in our online-learning-aided control mechanisms.
Specifically, at every time $t$, the network operator maintains an empirical  distribution of the network states, denoted by $\bv{\pi}(t)=(\pi_{s_1}(t), ..., \pi_{s_M}(t))$, where $\pi_{s_i}(t)=N_{s_i}(t)/t$ and $N_{s_i}(t)$ is the number of slots where $S(t)=s_i$ in $\{0, 1, ..., t-1\}$. 
Note that  $\lim_{t\rightarrow\infty}\pi_{s_i}(t) = \pi_{s_i}$ $w.p.1$.
%as $t\rightarrow\infty$, we must have $\pi_{s_i}(t)=\pi_{s_i}$.

Although we adopt a simple learning mechanism for $\pi_{s_i}(t)$ here,  other approaches for learning the distribution can also be used. In addition, if some prior knowledge of the system exists, it can be incorporated into $\pi_{s_i}(0)$, which further speeds up the learning.
Other practical techniques in typical machine learning can also be included, such as using a uniform prior to avoid large fluctuation in the early stage of learning. 
Depending on the features of the approaches, similar performance and convergence results may also be obtained.

Using the empirical distribution, we  then define the following empirical dual  problem as follows:
\begin{eqnarray}
\hspace{-.3in}&&\max:\,\,g(\bv{\beta}, t)\triangleq \sum_{s_i}\pi_{s_i}(t)g_{s_i}(\bv{\beta}), \quad\text{s.t.}\,\,\,\, \bv{\beta}\succeq\bv{0}. \label{eq:primal-approx}
%\vspace{-.06in}
\end{eqnarray}
%Our first lemma first shows that the function
%
%\begin{eqnarray}
%\hspace{-.2in}&& \min:F(\bv{x}, t)\triangleq V\sum_{s_i}\pi_{s_i}(t)f(s_i, x^{(s_i)})\label{eq:primal-approx}\\
%\hspace{-.2in}&&\quad\text{s.t.}\,\,\,H_j(\bv{x}, t)\\
%\hspace{-.2in} && \qquad\qquad\triangleq\sum_{s_i}\pi_{s_i}(t) [ A_j(s_i, x^{(s_i)})- \mu_j(s_i, x^{(s_i)})] \leq 0,\,\,\forall\, j,\nonumber\\
%\hspace{-.2in}&& \qquad\quad  x^{(s_i)}\in \script{X}^{(s_i)}\quad \forall\, i=1, 2, ..., M,\nonumber
%\end{eqnarray}
We denote $\bv{\beta}(t)=(\beta^*_{1}(t), \beta^*_{2}(t), ..., \beta^*_{r}(t))^T$ an optimal solution vector of (\ref{eq:primal-approx}), and we call this step of obtaining $\bv{\beta}(t)$ via (\ref{eq:primal-approx}) {\it dual learning}.  
 As we will see, $\bv{\beta}(t)$ plays a critical role in the proposed control schemes to significantly speed up the convergence to the optimal solution. On the other hand, by its definition, $\bv{\beta}(t)$ is time-varying and error-prone, which introduces significant challenges in the analysis of the proposed algorithms. Thus, novel  techniques are developed in Sections~\ref{section:performance} and \ref{section:convergence} for performance analysis.
 
In the following, we present two control techniques using the empirical Lagrange multiplier  $\bv{\beta}(t)$ obtained in dual learning for decision making.

%
%This is quite intuitive. as the empirical distribution $\bv{\pi}$ converges to $\bv{\pi}$ with probability $1$

\subsection{OLAC}
 In the first technique,  we use dual learning in each iteration of decision making. We also use a control parameter $\bv{\theta}>0$ to manage the queue deviation (Its function and value will be specified later).
 
\underline{\textsf{Online Learning-Aided Control} ($\mathtt{OLAC}$):} At every time slot $t$, do:
%. Then, do:
\begin{itemize}
\vspace{-.06in}
\item (Dual learning) Obtain $\bv{\beta}(t)$ by solving (\ref{eq:primal-approx}), %i.e.,
%\begin{eqnarray}
%\hspace{-.5in}&&\max:\,\,g(\bv{\beta}, t), \quad\text{s.t.}\,\,\,\, \bv{\beta}\succeq\bv{0}. %\label{eq:primal-approx-repeat}
%\end{eqnarray}
%\vspace{-.06in}
\item (Action selection) Observe the system backlog $\bv{q}(t)$, and define the \emph{effective} backlog $\bv{Q}(t)$ with:
 \begin{eqnarray}
Q_j(t) = q_j(t)+\beta_j(t)-\theta_j,\quad \forall\,\, j. \label{eq:effective-LM}
\end{eqnarray}
%\vspace{-.06in}
Observe the current network state $S(t)$. If $S(t)=s_i$, choose $x^{(s_i)}\in\script{X}^{(s_i)}$ that solves the following:
\vspace{-.06in}
\begin{eqnarray}
\hspace{-.6in}&&\max: \,\, -Vf(s_i, x)+\sum_{j=1}^{r}Q_j(t)\big[\mu_j(s_i, x)-A_j(s_i, x)\big]\label{eq:lac-eq}\\
\hspace{-.6in}&&\,\,\text{s.t.} \qquad x\in\script{X}^{(s_i)}.\nonumber
\end{eqnarray}
%\vspace{-.06in}
\item (Queueing update) Update all the queues with the arrival and service rates under the chosen actions according to (\ref{eq:queuedynamic}).  $\Diamond$
\end{itemize}

We see that the $\mathtt{OLAC}$ algorithm has both similarities and differences compared  to the Backpressure algorithm. On one hand, $\mathtt{OLAC}$ retains all the advantages of Backpressure, i.e., it does not require any statistical information of the system and it retains the low-complexity feature.
On the other hand, one should  note that $\mathtt{OLAC}$ has an important component that is not possessed by Backpressure, i.e., dual learning through empirical Lagrange multiplier. This step fundamentally changes the algorithm. In particular, $\bv{\beta}(t)$ takes advantage of the empirical information to speed up the convergence of $\bv{Q}(t)$ to the optimal Lagrange multiplier. In addition, it also serves the function of a ``virtual" queue vector, and thus reduces the actual queue size of $\bv{q}(t)$ (See Fig. \ref{fig:intuition}). %However, we note that $\bv{\beta}$ is time-varying, and thus makes it very difficult to analyze, and thus calls for novel analysis tools. 

As we will see, $\mathtt{OLAC}$ achieves a near-optimal utility-delay tradeoff and has a significantly faster learning speed. The dual learning method also connects subgradient-type update analysis to the convergence properties of the system state distribution, allowing us to leverage useful results in the learning literature for performance analysis in stochastic network optimization.

A few comments on the control parameter $\bv{\theta}>0$ are in order. As we will see later, the effective backlog $\bv{Q}(t)$ will  eventually converge to $\bv{\gamma}^*$ (See the right side of Fig. \ref{fig:intuition}). 
Since $\bv{\beta}(t)$ also converges to $\bv{\gamma}^*$, this implies that $\bv{\beta}(t)+\bv{q}(t)$ will eventually 
be larger than $\bv{\gamma}^*$. Thus, if we were to use only $\bv{\beta}(t)+\bv{q}(t)$ as the weight for the Backpressure controller, the system will always ``think'' that it is in a congested stage and operate at an ``over-provisioned'' mode, which will lead to utility loss. The introduction of $\bv{\theta}$ is  trying to solve this problem. After subtracting $\bv{\theta}$, the effective backlog $\bv{Q}(t)$ can also go below $\bv{\gamma}^*$, which allows the Backpressure controller to choose ``under-provisioned'' actions and to achieve the desired performance through  time-sharing over-provisioned and under-provisioned actions \cite{neelyenergydelay}. We will also show in Section~\ref{section:performance} that it suffices to choose $\bv{\theta}=\Theta([\log(1/\epsilon)]^2)$ to guarantee an $O(\epsilon)$ close-to-optimal utility performance (Here $\epsilon=1/V$).  
%converge to $\bv{\theta}$, resulting in an average queue size of $\sum_j\theta_j$. While it is tempting to choose $\bv{\theta}=\bv{0}$, in which case one can indeed make the average queue size $\Theta(1)$, it turns out that the value of $\bv{\theta}$ also determines the extent to which the system can operate in the under-provision mode, which must be time-shared with the over-provision mode  in order to achieve an $O(\epsilon)$ close-to-optimal utility performance (Here $\epsilon=1/V$). Therefore, in order to achieve a good utility performance, one must choose a large enough $\bv{\theta}$. 
%, which is discussed in detail in Section~\ref{section:performance}.

\subsection{OLAC2}

We now present the second algorithm, which we call \textsf{Online Learning-Aided Control $2$  ($\mathtt{OLAC2}$)}. $\mathtt{OLAC2}$ combines the LIFO-Backpressure algorithm \cite{huang-lifo-ton} with dual learning, and contains a  backlog adjustment step. %$\mathtt{OLAC2}$ is a modified version of the LIFO-Backpressure algorithm. The main difference of $\mathtt{OLAC2}$ is that it
%specifies a time  $T_l=V^{c}$ for quickly learning an approximate Lagrange multiplier.
%Then, in the second phase, we run a modified Backpressure algorithm, using the learned Lagrange multiplier as the starting point. %For $\mathtt{OLAC2}$, we will also utilize a virtual process $\bv{W}(t)$ to keep track of the backlog that should have been generated if the Backpressure algorithm is adopted.
%
%\textcolor{red}{seems that if we learn and stop, we need to use FQLA, where we have two processes, one for tracking $\bv{\gamma}^*$ and the other for the queue. In this case, we could prove a $\Theta(V^{2/3})$ convergence and the same optimal delay-utility tradeoff.}

\underline{\textsf{Online Learning-Aided Control $2$ ($\mathtt{OLAC2}$):}} Choose a time $T_l=V^c$ for some $c\in[0, 1)$. At every time slot $t$, do:
\begin{itemize}
\vspace{-.06in}
\item (Action selection) Observe the current network state $S(t)$ and queue backlog $\bv{q}(t)$. If $S(t)=s_i$, choose $x^{(s_i)}\in\script{X}^{(s_i)}$ that solves the following:
\vspace{-.06in}
\begin{eqnarray}
\hspace{-.85in}&&\max: \,\, -Vf(s_i, x)+\sum_{j=1}^{r}q_j(t)\big[\mu_j(s_i, x)-A_j(s_i, x)\big] \label{eq:lac2-eq}\\
\hspace{-.85in}&&\,\,\text{s.t.} \qquad x\in\script{X}^{(s_i)}.\nonumber
\end{eqnarray}
\item (Queueing update) Update all the queues with the arrival and service rates under the chosen actions according to (\ref{eq:queuedynamic}) with the Last-In-First-Out (LIFO) discipline.
\item (Dual learning and backlog adjustment) Only at $t=T_l$, solve (\ref{eq:primal-approx}) with $\bv{\pi}(T_l-1)$ and obtain the empirical Lagrange multiplier $\tilde{\bv{\beta}}$. Then, make $\bv{q}(T_l)=\tilde{\bv{\beta}}$ by dropping packets if $q_j(T_l-1)>\tilde{\beta}_j$ or adding null packets if $q_j(T_l-1)<\tilde{\beta}_j$. $\Diamond$
%\item \underline{Phase $2$:} At every  time $t\geq T_l$, do:
%%
%\begin{enumerate}
%\item Observe the  backlog $\bv{q}(t)$ and  the current network state $S(t)$. If $S(t)=s_i$, choose $x^{(s_i)}\in\script{X}^{(s_i)}$ that solves (\ref{eq:lac2-eq}).
%\item Update all the queues with the arrival and service rates under the chosen actions according to (\ref{eq:queuedynamic}) with the Last-In-First-Out (LIFO) discipline.
%\end{enumerate}
\end{itemize}
Note that although there exists a packet dropping step in dual learning and backlog adjustment, packet dropping rarely happens. 
%its probability goes to zero as $V$ increases. 
The intuition is that at $t=T_l =V^c$, we have with high probability that $\tilde{\beta} = O(V)$. On the other hand, since the queue size increment is always $\Theta(1)$, we only have $\bv{q}(T_l-1)=O(V^c)$. 
%Although $\mathtt{OLAC2}$ looks similar to the LIFO-Backpressure algorithm developed in \cite{huang-lifo-ton}, 
The key difference between $\mathtt{OLAC2}$ and the LIFO-Backpressure algorithm developed in \cite{huang-lifo-ton} is  that $\mathtt{OLAC2}$ contains a dual learning phase and adjusts its backlog condition at time $t=T_l$. The intuition here is that $\tilde{\bv{\beta}}$ computed at $t=T_l$ will give us good enough learning of the system, which provides better estimation of the true optimal Lagrangian multiplier than the queue value obtained via subgradient-type updates under LIFO-Backpressure.
As we will see in Section \ref{section:convergence} that, these two steps are critical for $\mathtt{OLAC2}$, as they  enable the  achievement of  a superior convergence performance. 

Note that the LIFO discipline is important to  $\mathtt{OLAC2}$. This is so because if $\mathtt{OLAC2}$ does not utilize any auxiliary process to ``substitute'' the true backlog. %If $\tilde{\bv{\beta}}$ is far away from $\bv{\gamma}^*$, 
Thus, the queue will eventually build up, and its average value will still be  $\bv{\gamma}^*=\Theta(V)$. Hence, packets can experience large delay. 
%Hence, the actual queues will eventually build up. 
However, LIFO scheduling ensures that most of the packets are not affected by such network congestion and can go through the system with low delay.

\subsection{Performance Guarantee}
We summarize the proven properties of $\mathtt{OLAC}$ and $\mathtt{OLAC2}$  here while leaving the technical details to next two sections.

 \olac: $\mathtt{OLAC}$ achieves the $[O(\epsilon), O([\log(1/\epsilon)]^2)]$ utility-delay tradeoff for general stochastic optimization problems. Compared to other Backpressure-based algorithms,  $\mathtt{OLAC}$ is easier to implement and shows much better convergence performance (see Figure~\ref{fig:intuition}). The convergence analysis  of  $\mathtt{OLAC}$ is much more difficult because of the time-varying nature of $\bv{\beta}(t)$, and thus is left for future work.

 \olacb: $\mathtt{OLAC2}$ achieves the $[O(\epsilon), O([\log(1/\epsilon)]^2)]$ utility-delay tradeoff for general stochastic optimization problems. In addition, it requires a convergence time of only $O(\epsilon^{-c} + \epsilon^{c/2-1}\log(1/\epsilon))$ with high probability. Therefore,  by selecting $c=2/3$ in \olacb, we  see that with very high probability, the system under $\mathtt{OLAC2}$  will enter the near-optimal state in only $O(\epsilon^{-2/3}\log(1/\epsilon))$ time! This is in high contrast to the Backpressure algorithm, whose convergence time is $\Theta(1/\epsilon)$.  %, and demonstrates   the power of  learning in stochastic network optimization.

\section{Performance Analysis}\label{section:convergence} \label{section:performance}
In this section, we first present some preliminaries needed for our analysis. Then, we
present the detailed performance  results for $\mathtt{OLAC}$ and $\mathtt{OLAC2}$.

%Before carrying out our analysis,
\subsection{The augmented problem and preliminaries}
Due to the introduction of $\bv{\beta}(t)$ in decision making, the performance of $\mathtt{OLAC}$ and $\mathtt{OLAC2}$ cannot be obtained by directly applying the typical Lyapunov analysis. To overcome this obstacle, we introduce the following \emph{augmented problem} and carry out our analysis based on it. Specifically, we define:
\begin{eqnarray}
\hspace{-.2in}&&\min: F(\bv{x}) + \sum_j (\beta_j(t) -\theta_j) H_j(\bv{x})
\label{eq:primal-aug}\\
\hspace{-.2in}&&\quad\text{s.t.}\,\,\,H_j(\bv{x})\leq 0, \,\,\forall\, j,\nonumber\\
\hspace{-.2in}&&\qquad\quad x^{(s_i)}\in \script{X}^{(s_i)}\quad \forall\, i=1, 2, ..., M. \nonumber
\end{eqnarray}
Let $\tilde{g}(\bv{\gamma})$ be the dual function of (\ref{eq:primal-aug}), i.e.,
\vspace{-.06in}
\begin{eqnarray}
\hspace{-.3in}&&\tilde{g}(\bv{\gamma})=\inf_{x^{(s_i)}\in \script{X}^{(s_i)}}\sum_{s_i}\pi_{s_i}\bigg\{Vf(s_i, x^{(s_i)})\label{eq:dual_separable-aug}\\
\hspace{-.3in}&& \qquad+\sum_j[\gamma_j + \beta_j(t) - \theta_j]\big[A_j(s_i, x^{(s_i)})- \mu_j(s_i, x^{(s_i)})\big]\bigg\}.\nonumber
\end{eqnarray}
It is interesting to notice the similarity between (\ref{eq:dual_separable-aug}) and (\ref{eq:lac-eq}) (note that $Q_j(t) = q_j(t)+\beta_j(t) -\theta_j$).
Using the definition of $g(\bv{\gamma})$, we have:
\begin{eqnarray}
\tilde{g}(\bv{\gamma}) = g(\bv{\gamma}+\bv{\beta}(t)-\bv{\theta}). \label{eq:g-tilde-eq}
\end{eqnarray}

In the following, we state the assumptions we make throughout the paper. 
These assumptions are mild and can  typically be satisfied in network optimization problems.
\vspace{-.06in}
\begin{assumption}\label{assumption:bdd-LM}
There exists a constant $\epsilon_s=\Theta(1)>0$ such that for any valid state distribution $\bv{\pi}' = (\pi'_{s_1}, ..., \pi'_{s_M})$ with $\|\bv{\pi}' - \bv{\pi} \|\leq \epsilon_s$, there exists a set of actions $\{x^{(s_i)}_k\}_{i=1,..., M}^{k=1,2, ..., \infty}$ with $x^{(s_i)}_k\in\script{X}^{(s_i)}$ and some variables $\vartheta^{(s_i)}_k\geq0$ for all $s_i$ and $k$ with $\sum_k\vartheta^{(s_i)}_k=1$ for all $s_i$ (possibly depending on $\bv{\pi}'$), such that:
\begin{eqnarray}
\sum_{s_i}\pi_{s_i}\big\{\sum_k\vartheta^{(s_i)}_k[A_{j}(s_i, x^{(s_i)}_k)-\mu_{j}(s_i, x^{(s_i)}_k)]\big\}\leq -\eta_0, \label{eq:slackness}
\end{eqnarray}
where $\eta_0=\Theta(1)>0$ is independent of $\bv{\pi}'$. $\Diamond$
\end{assumption}
\begin{assumption}\label{assumption:equal}
There exists a set of  actions $\{x^{(s_i)*}_k\}_{i=1,..., M}^{k=1, ..., \infty}$ with $x^{(s_i)*}_k\in\script{X}^{(s_i)}$ and some variables $\vartheta^{(s_i)*}_k\geq0$ for all $s_i$ and $k$ with $\sum_k\vartheta^{(s_i)*}_k=1$ for all $s_i$, such that:
\begin{eqnarray}
\hspace{-.45in}&&\qquad\qquad\qquad\qquad\,\,\,  \sum_{s_i}\pi_{s_i}\sum_k\vartheta^{(s_i)*}_kf(s_i, x^{(s_i)*}_k)  = f_{\text{av}}^*,  \\
\hspace{-.45in}&&\sum_{s_i}\pi_{s_i}\big\{\sum_k\vartheta^{(s_i)*}_k[A_{j}(s_i, x^{(s_i)*}_k)-\mu_{j}(s_i, x^{(s_i)*}_k)]\big\} = 0. \,\,\Diamond\nonumber
\end{eqnarray}
\end{assumption}
\begin{assumption}\label{assumption:unique}
$\bv{\gamma}_0^*$ is the  unique optimal solution of $g_0(\bv{\gamma})$ in $\mathbb{R}^r$. $\Diamond$
\vspace{-.06in}
\end{assumption}

Some remarks are in order. Notice that by having $\eta_0>0$ in Assumption \ref{assumption:bdd-LM}, we assume that there exists at least one control policy under which  
the resulting system arrival rate vector is strictly smaller than the resulting service rate vector (or the arrival rate vector is strictly inside the capacity region if it is exogenous). This is known as the ``slack'' condition, and is commonly made in the literature with $\epsilon_s=0$, e.g., \cite{ying_wmshortest_infocom09},  \cite{buisrikant_infocom09} (it is always necessary to have $\eta_0\geq0$ for system stability \cite{neelynowbook}). %, in which case, it is known as the ``slack'' condition and it is necessary to have $\eta_0\geq0$ for system stability \cite{neelynowbook}. 
%Under this assumption, one can show that there exists a stationary randomized policy, which can stabilize all the queues in the network (where $\vartheta^{(s_i)}_k$ represents the probability of choosing action $x^{(s_i)}_k$ when $S(t)=s_i$) \cite{neelynowbook}.
%
Here with $\epsilon_s>0$, we assume in addition that when two systems are relatively ``similar'' to each other, they can both be stabilized by some randomized control policy (may be different) that results in the same slack. 
 Assumption \ref{assumption:equal} is also commonly satisfied by most network optimization problems, especially when the cost $f(s_i, x^{(s_i)})$ increases with the increment of the services rate $\mu_j(s_i, x^{(s_i)})$. 
Finally, Assumption \ref{assumption:unique} holds for many network utility optimization problems, especially when the corresponding cost functions are strictly convex, e.g., \cite{eryilmaz_qbsc_ton07} and \cite{huangneely_dr_tac}.

%Notice that Assumption \ref{assumption:bdd-LM} is mostly assumed with $\epsilon_s=0$ in the literature, e.g., \cite{ying_wmshortest_infocom09},  \cite{buisrikant_infocom09}, and is known as the ``slack'' condition. Under this assumption, one can show that there exists a stationary randomized policy that stabilizes all the queues in the network (where $\vartheta^{(s_i)}_k$ represents the probability of choosing action $x^{(s_i)}_k$ when $S(t)=s_i$) \cite{neelynowbook}.
%%
%Here with $\epsilon_s>0$, we assume that when two systems are relatively ``similar'' to each other, they can both be stabilized by some randomized control policy (may be different) that results in the same slack.
%%
% Assumption \ref{assumption:equal} is also commonly satisfied by most network optimization problems, especially when the cost $f(s_i, x^{(s_i)})$ increases with the increment of the services rate $\mu_j(s_i, x^{(s_i)})$.
%Finally, Assumption \ref{assumption:unique} holds for many network utility optimization problems, e.g., \cite{eryilmaz_qbsc_ton07} and \cite{huangneely_dr_tac}. %Our second assumption concerns the existence of service slack.

%The following 
Under these assumptions, our first lemma shows that the magnitude of $\bv{\beta}(t)$ quickly becomes bounded as time goes on.

\begin{lemma}\label{lemma:beta-conv}
There exists an $O(1)$ time $T_{\epsilon_s}<\infty$, such that with probability $1$, for all $t\geq T_{\epsilon_s}$,
\begin{eqnarray}
\hspace{-.4in}&&\sum_j\beta_j(t) \leq \xi\triangleq \frac{Vf_{\max}}{\eta_0}. \quad\Diamond\nonumber
\end{eqnarray}
%Here  $\delta_{s_i}(t) \triangleq \pi_{s_i} - \pi_{s_i}(t)$. $$
\end{lemma}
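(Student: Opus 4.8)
The plan is to combine a law-of-large-numbers statement for the empirical distribution with the standard boundedness argument for the optimal Lagrange multiplier of a program that satisfies a strict slack condition. The only point requiring care is that the slack must be available under the \emph{time-varying empirical} distribution $\bv{\pi}(t)$ rather than the true $\bv{\pi}$, and this is exactly what the robust form of Assumption~\ref{assumption:bdd-LM} (with $\epsilon_s>0$) supplies.

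First I would pin down the random time $T_{\epsilon_s}$. Since $S(t)$ is i.i.d.\ with distribution $\bv{\pi}$ and there are only $M$ states, the strong law of large numbers gives $\pi_{s_i}(t)=N_{s_i}(t)/t\to\pi_{s_i}$ w.p.$1$ for each $i$, hence $\|\bv{\pi}(t)-\bv{\pi}\|\to 0$ w.p.$1$. Therefore, on almost every sample path there is a finite time $T_{\epsilon_s}$ after which $\|\bv{\pi}(t)-\bv{\pi}\|\leq\epsilon_s$ for all $t\geq T_{\epsilon_s}$. Because $\epsilon_s=\Theta(1)$ and the convergence of $\bv{\pi}(t)$ depends only on the state statistics and not on $V$, this $T_{\epsilon_s}$ does not scale with $V$, which is the sense in which it is an $O(1)$ time.

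Next, I would fix any $t\geq T_{\epsilon_s}$ and invoke Assumption~\ref{assumption:bdd-LM} with $\bv{\pi}'=\bv{\pi}(t)$ (legitimate since $\|\bv{\pi}(t)-\bv{\pi}\|\leq\epsilon_s$), obtaining a randomized policy $\{x^{(s_i)}_k,\vartheta^{(s_i)}_k\}$ depending on $\bv{\pi}(t)$ that achieves a per-queue slack of at least $\eta_0$ under the empirical distribution, i.e.\ $\sum_{s_i}\pi_{s_i}(t)\sum_k\vartheta^{(s_i)}_k[A_j-\mu_j]\leq-\eta_0$ for every $j$. I would then sandwich the empirical dual optimum $g(\bv{\beta}(t),t)$. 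For the upper bound, each $g_{s_i}(\bv{\beta}(t))$ is an infimum over $\script{X}^{(s_i)}$, so evaluating at the randomized policy and averaging over $k$ can only raise the value above the infimum; summing with weights $\pi_{s_i}(t)$ and using $f\leq f_{\max}$, the weights summing to one, $\bv{\beta}(t)\succeq\bv{0}$, and the slack inequality yields
\[
g(\bv{\beta}(t),t)\;\leq\; Vf_{\max}-\eta_0\sum_j\beta_j(t).
\]
For the lower bound, since $\bv{\beta}(t)$ maximizes $g(\cdot,t)$ and $f\geq0$,
\[
g(\bv{\beta}(t),t)\;\geq\; g(\bv{0},t)=\sum_{s_i}\pi_{s_i}(t)\inf_{x^{(s_i)}\in\script{X}^{(s_i)}}Vf(s_i,x^{(s_i)})\;\geq\;0.
\]
Chaining the two inequalities gives $\eta_0\sum_j\beta_j(t)\leq Vf_{\max}$, i.e.\ $\sum_j\beta_j(t)\leq Vf_{\max}/\eta_0=\xi$, which holds for all $t\geq T_{\epsilon_s}$ on the almost-sure event above.

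The main obstacle is conceptual rather than computational: recognizing that the slack guarantee has to be transported from the true distribution to the empirical one, and isolating the $O(1)$ convergence time of $\bv{\pi}(t)$ into the ball of radius $\epsilon_s$ as the correct object. Once the robust slack condition is applied at $\bv{\pi}'=\bv{\pi}(t)$, the two-sided estimate on $g(\bv{\beta}(t),t)$ is the routine duality-plus-boundedness argument and involves only elementary bounds ($0\leq f\leq f_{\max}$ and $\bv{\beta}(t)\succeq\bv{0}$).
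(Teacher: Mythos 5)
Your proposal is correct and follows the same overall architecture as the paper's proof: use the strong law of large numbers to get $\|\bv{\pi}(t)-\bv{\pi}\|\leq\epsilon_s$ after a finite, $V$-independent time, invoke the robust slack of Assumption~\ref{assumption:bdd-LM} at $\bv{\pi}'=\bv{\pi}(t)$, and plug the resulting randomized policy into the infimum defining $g(\bv{\beta}(t),t)$ to get the upper bound $Vf_{\max}-\eta_0\sum_j\beta_j(t)$. The one place you genuinely diverge is the lower bound on $g(\bv{\beta}(t),t)$: the paper identifies $g(\bv{\beta}(t),t)$ with the optimal cost $f^*_{\text{av}}(\bv{\pi}(t))$ of a fictitious system via a strong-duality theorem cited from earlier work, and then uses nonnegativity of the cost; you instead observe that $\bv{\beta}(t)$ maximizes $g(\cdot,t)$ over $\bv{\beta}\succeq\bv{0}$, so $g(\bv{\beta}(t),t)\geq g(\bv{0},t)\geq 0$ directly from $f\geq 0$. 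Your route is more elementary and self-contained -- it needs only weak duality-type reasoning and avoids importing the external strong-duality result -- while the paper's route additionally tells you what $g(\bv{\beta}(t),t)$ equals, which it does not actually need for this lemma. Both are valid; your handling of the transported slack (explicitly stating it under the empirical distribution $\bv{\pi}(t)$) is in fact cleaner than the paper's statement of (\ref{eq:slackness}), which is written with the weights $\pi_{s_i}$ but is clearly intended, and used in the appendix, with the weights $\pi_{s_i}(t)$.
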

\begin{proof}
See Appendix A.
\end{proof}

With the above bound, we have the following corollary, which shows  the convergence of $g(\bv{\beta}, t)$ to $g(\bv{\beta})$.
\begin{coro}\label{corollary:uniform-conv}
With probability $1$, for all $t\geq T_{\epsilon_s}$ (here $T_{\epsilon_s}$ is defined in Lemma \ref{lemma:beta-conv}), the function $g(\bv{\beta}, t)$ satisfies:
\begin{eqnarray}
\hspace{-.4in}&&|g(\bv{\beta}(t), t) -  g(\bv{\beta}(t))| \leq  \max_{s_i}|\delta_{s_i}(t)| M(Vf_{\max}+r\xi B). \nonumber
\end{eqnarray}
Here $\xi =   \frac{Vf_{\max}}{\eta_0}$ is defined in Lemma \ref{lemma:beta-conv},  $\delta_{s_i}(t) \triangleq \pi_{s_i} - \pi_{s_i}(t)$ is the estimation error of the empirical distribution for state $s_i$, and $M$ is the number of system states. $\Diamond$
\end{coro}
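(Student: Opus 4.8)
The plan is to exploit the fact that $g(\bv{\beta}(t),t)$ and $g(\bv{\beta}(t))$ are two weighted sums of the \emph{same} per-state dual functions $g_{s_i}(\bv{\beta}(t))$, differing only in the weights $\pi_{s_i}(t)$ versus $\pi_{s_i}$. Using the separability relation (\ref{eq:sum-dual}) and the definition (\ref{eq:primal-approx}), I would first collapse the difference into a single sum,
\begin{eqnarray}
g(\bv{\beta}(t),t) - g(\bv{\beta}(t)) = \sum_{s_i}\big(\pi_{s_i}(t)-\pi_{s_i}\big)g_{s_i}(\bv{\beta}(t)) = -\sum_{s_i}\delta_{s_i}(t)\,g_{s_i}(\bv{\beta}(t)),\nonumber
\end{eqnarray}
so that, after the triangle inequality and factoring out $\max_{s_i}|\delta_{s_i}(t)|$, the whole estimate reduces to a uniform bound on $|g_{s_i}(\bv{\beta}(t))|$.

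The crux is therefore a bound on $|g_{s_i}(\bv{\beta}(t))|$ that holds simultaneously for every state $s_i$ and for all $t\geq T_{\epsilon_s}$. For this I would invoke Lemma~\ref{lemma:beta-conv}, which guarantees $\sum_j\beta_j(t)\leq\xi$ (and hence $\beta_j(t)\leq\xi$ for each $j$, since $\bv{\beta}(t)\succeq\bv{0}$) with probability $1$ once $t\geq T_{\epsilon_s}$. Substituting this into the definition (\ref{eq:dual_single}) and using the standing bounds $0\leq f(s_i,x)\leq f_{\max}$ and $|A_j(s_i,x)-\mu_j(s_i,x)|\leq B$, every term inside the infimum obeys
\begin{eqnarray}
-r\xi B \;\leq\; Vf(s_i,x)+\sum_j\beta_j(t)\big[A_j(s_i,x)-\mu_j(s_i,x)\big] \;\leq\; Vf_{\max}+r\xi B\nonumber
\end{eqnarray}
for every feasible $x\in\script{X}^{(s_i)}$. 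Since $g_{s_i}(\bv{\beta}(t))$ is the infimum of this quantity over the compact feasible set, it lies in the same interval, yielding $|g_{s_i}(\bv{\beta}(t))|\leq Vf_{\max}+r\xi B$ uniformly in $s_i$ and $t$.

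Combining the two steps gives $|g(\bv{\beta}(t),t)-g(\bv{\beta}(t))|\leq \max_{s_i}|\delta_{s_i}(t)|\sum_{s_i}|g_{s_i}(\bv{\beta}(t))|\leq \max_{s_i}|\delta_{s_i}(t)|\,M\,(Vf_{\max}+r\xi B)$, since there are $M$ states, which is exactly the claimed inequality. The only genuinely non-trivial ingredient is the uniform control of the per-state dual values, and that is already delivered by Lemma~\ref{lemma:beta-conv}; everything else is routine bookkeeping. I expect the main subtlety to be purely notational: tracking that the bound on $\bv{\beta}(t)$ is a probability-$1$, eventually-holding statement, so that the restriction to $t\geq T_{\epsilon_s}$ and the ``$w.p.1$'' qualifier in the conclusion are inherited directly from the lemma rather than re-established here.
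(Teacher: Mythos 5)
Your proposal is correct and follows essentially the same route as the paper's own proof: decompose the difference via the separability $g(\bv{\beta},t)-g(\bv{\beta})=\sum_{s_i}(\pi_{s_i}(t)-\pi_{s_i})g_{s_i}(\bv{\beta}(t))$, bound each $|g_{s_i}(\bv{\beta}(t))|$ by $Vf_{\max}+r\xi B$ using the $\bv{\beta}(t)$ bound from Lemma \ref{lemma:beta-conv}, and finish with the triangle inequality over the $M$ states. If anything, you are slightly more careful than the paper in justifying the two-sided bound on $g_{s_i}(\bv{\beta}(t))$ (the paper only states the upper bound before taking absolute values), but the argument is the same.
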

\begin{proof}
See Appendix B.
\end{proof}

With Lemma \ref{lemma:beta-conv} and Corollary \ref{corollary:uniform-conv}, we next show that  $\bv{\beta}(t)$ converges to $\bv{\gamma}^*$ with probability $1$. %Here we assume the system operator has knowledge about the magnitude of the optimal Lagrange multiplier $\bv{\gamma}^*$. Thus,
%Thus, in the case when $\bv{\beta}(t)=\infty$, we set $\bv{\beta}(t)=\bv{0}$. %, where $M$ is a large number that guarantees $M\bv{1}\succeq \bv{\gamma}^*$.
%
%Now we have the following intuitive lemma. %, which shows that $\bv{\beta}(t)$ converges to $\bv{\gamma}^*$ with probability $1$.
\begin{lemma}\label{lemma:conv}
$\lim_{t\rightarrow\infty} \bv{\beta}(t) = \bv{\gamma}^*$ $w.p.1$. $\Diamond$
\end{lemma}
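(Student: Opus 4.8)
The plan is to use a standard argmax-consistency argument built on a subsequence extraction. The starting point is that by Lemma~\ref{lemma:beta-conv}, for all $t \geq T_{\epsilon_s}$ the iterates $\bv{\beta}(t)$ lie in the compact set $K \triangleq \{\bv{\gamma} \succeq \bv{0} : \sum_j \gamma_j \leq \xi\}$, so the sequence $\{\bv{\beta}(t)\}$ is bounded $w.p.1$. To prove convergence of the whole sequence, I would show that every convergent subsequence has the same limit $\bv{\gamma}^*$; a bounded sequence in $\mathbb{R}^r$ all of whose subsequential limits coincide must itself converge to that common value. So fix a subsequence along which $\bv{\beta}(t_k) \to \bv{\beta}^\infty$ for some $\bv{\beta}^\infty \in K$ (such a subsequence exists by Bolzano--Weierstrass, and $\bv{\beta}^\infty \succeq \bv{0}$ since $K$ is closed).

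Next I would verify that $\bv{\beta}^\infty$ maximizes the true dual function $g$ over $\bv{\gamma} \succeq \bv{0}$. For any fixed feasible $\bv{\gamma}$, optimality of $\bv{\beta}(t_k)$ for the empirical problem (\ref{eq:primal-approx}) gives $g(\bv{\beta}(t_k), t_k) \geq g(\bv{\gamma}, t_k)$. On the right-hand side, since $g(\bv{\gamma}, t_k) = \sum_{s_i} \pi_{s_i}(t_k) g_{s_i}(\bv{\gamma})$ with each $g_{s_i}(\bv{\gamma})$ finite (compact action sets, bounded functions) and $\pi_{s_i}(t_k) \to \pi_{s_i}$ $w.p.1$, we get $g(\bv{\gamma}, t_k) \to g(\bv{\gamma})$. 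On the left-hand side, I would split
\begin{eqnarray}
|g(\bv{\beta}(t_k), t_k) - g(\bv{\beta}^\infty)| &\leq& |g(\bv{\beta}(t_k), t_k) - g(\bv{\beta}(t_k))| \nonumber\\
&& + \, |g(\bv{\beta}(t_k)) - g(\bv{\beta}^\infty)|. \nonumber
\end{eqnarray}
The first term vanishes by Corollary~\ref{corollary:uniform-conv} (its bound is proportional to $\max_{s_i}|\delta_{s_i}(t_k)| \to 0$), and the second vanishes by continuity of the concave function $g$ together with $\bv{\beta}(t_k) \to \bv{\beta}^\infty$. Passing to the limit in the optimality inequality yields $g(\bv{\beta}^\infty) \geq g(\bv{\gamma})$ for every $\bv{\gamma} \succeq \bv{0}$, so $\bv{\beta}^\infty$ is dual-optimal.

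Finally I would invoke uniqueness. Assumption~\ref{assumption:unique} states that $\bv{\gamma}_0^*$ is the unique maximizer of $g_0$, and by the scaling identity (\ref{eq:dual-relationship}) this makes $\bv{\gamma}^* = V\bv{\gamma}_0^*$ the unique maximizer of $g$. Hence $\bv{\beta}^\infty = \bv{\gamma}^*$. Since the subsequence was arbitrary, every convergent subsequence of the bounded sequence $\{\bv{\beta}(t)\}$ has limit $\bv{\gamma}^*$, which gives $\lim_{t\to\infty}\bv{\beta}(t) = \bv{\gamma}^*$ $w.p.1$.

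The step I expect to be the main obstacle is the left-hand-side limit $g(\bv{\beta}(t_k), t_k) \to g(\bv{\beta}^\infty)$: this is a two-layer limit in which both the function $g(\cdot, t_k)$ and its evaluation point $\bv{\beta}(t_k)$ move simultaneously, so mere pointwise convergence of the empirical dual function is insufficient. The argument closes precisely because Corollary~\ref{corollary:uniform-conv} controls $g(\cdot, t_k) - g(\cdot)$ evaluated at the moving point $\bv{\beta}(t_k)$, and because Lemma~\ref{lemma:beta-conv} confines the iterates to the fixed compact set $K$; without this uniform-type control and the uniqueness in Assumption~\ref{assumption:unique}, the empirical maximizers could a priori oscillate rather than converge.
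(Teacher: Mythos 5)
Your proposal is correct and is essentially the paper's own argument: both rest on the boundedness of $\bv{\beta}(t)$ from Lemma~\ref{lemma:beta-conv}, the vanishing empirical-dual error from Corollary~\ref{corollary:uniform-conv}, continuity of the concave dual $g$, and the uniqueness in Assumption~\ref{assumption:unique}. The only difference is packaging --- you phrase it as a direct subsequential-limit (argmax-consistency) argument, while the paper runs the equivalent argument by contradiction on the dual-value gap $g(\bv{\gamma}^*)-g(\bv{\beta}(t_k))\geq\epsilon_\delta$, itself established via the same Bolzano--Weierstrass extraction on the compact set $\{\bv{\beta}:\|\bv{\beta}\|\leq\xi\}$.
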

\begin{proof}
See Appendix C.
\end{proof}
Notice that while Assumption \ref{assumption:unique} assumes that there is a unique optimal for (\ref{eq:primal}), it does not guarantee that $g(\bv{\beta}, t)$ will also only have a unique solution. Lemma \ref{lemma:conv} thus shows that although such a condition can appear, it is simply a transient phenomenon and $\bv{\beta}(t)$ will eventually converge to $\bv{\gamma}^*$.
According to Assumption \ref{assumption:unique}, (\ref{eq:g-tilde-eq}) implies that the unique maximizer of $\tilde{g}(\bv{\gamma})$, denoted by $\tilde{\bv{\gamma}}^*(t)$,  is given by $\tilde{\bv{\gamma}}^*(t)=\bv{\gamma}^* - \bv{\beta}(t) +\bv{\theta}$. Using Lemma \ref{lemma:conv}, we also see that:
\begin{eqnarray}
\lim_{t\rightarrow\infty}\tilde{\bv{\gamma}}^*(t)\rightarrow\bv{\theta}, \quad w.p.1. \label{eq:gamma-conv-theta}
\end{eqnarray}

In the following, we will carry out our analysis about  the performance of $\mathtt{OLAC}$ and $\mathtt{OLAC2}$ under a general system structure that commonly appears in practice. %This structure  has its own unique features, and will be mathematically characterized using  the dual function $g(\bv{\gamma})$
For our  analysis, it is also useful to define $B\triangleq\frac{r}{2}\delta_{\max}^2$. It can be seen that $\|\bv{\mu}(t) - \bv{A}(t)\|\leq B$ at all time.

%under a general system structures that commonly appear in practice, polyhedral and locally-smooth \cite{huangneely_dr_tac}. Each structure  has its own unique features, and will be mathematically characterized using  the dual function $g(\bv{\gamma})$. 

\subsection{Performance of OLAC and OLAC2}
%We first consider is the polyhedral structure. 
We now carry out our analysis for $\mathtt{OLAC}$ and $\mathtt{OLAC2}$ under a \emph{polyhedral} system structure. 
Intuitively speaking, this structure  appears in systems where the feasible control action sets are finite, in which case once an action becomes the minimizer of the dual function at $\bv{\gamma}$, it remains so at  all $\bv{\gamma}'$ in some neighborhood of $\bv{\gamma}$, resulting in a constant service and arrival rate difference (which determines the slope of $g(\bv{\gamma})$). 
The formal definition of the polyhedral structure is as follows \cite{huangneely_dr_tac}.
%We first consider the case when the system has a \emph{polyhedral} structure defined as follows.
\vspace{-.06in}
\begin{definition}
A system is polyhedral with parameter $\rho>0$ if the dual function $g_0(\bv{\gamma})$ satisfies:
\begin{eqnarray}
g_0(\bv{\gamma}^*_0)\geq g_0(\bv{\gamma})+\rho\|\bv{\gamma}^*_0-\bv{\gamma}\|.\,\,\,\Diamond\label{eq:polyhedral}
\end{eqnarray}
\end{definition}
%Fig. \ref{fig:polyhedral} gives a pictorial representation of the polyhedral structure.
%%Such a condition is often satisfied in network optimization problems, especially when .  In this case,
%%
%\begin{figure}[ht]
%\begin{center}
%\vspace{-.1in}
%\includegraphics[width=2in, height=1.4in]{polyhedral}
%\vspace{-.1in}
%\caption{A polyhedral dual function. Such a structure typically appears when the system feasible control action set is discrete and finite, in which case the $g(\bv{\gamma})$ function assumes a constant minimizer that determines it slope. }
%\label{fig:polyhedral}
%\end{center}
%\vspace{-.1in}
%\end{figure}

Using (\ref{eq:polyhedral}), we have:
\begin{eqnarray*}
Vg_0(\bv{\gamma}^*_0) \geq Vg_0(\bv{\gamma}/V)+V\rho\|\bv{\gamma}^*_0-\bv{\gamma}/V\|.
\end{eqnarray*}
Together with (\ref{eq:dual-relationship}) and the fact that $\bv{\gamma}^*=V\bv{\gamma}_0^*$, we see that the above implies:
\begin{eqnarray}
g(\bv{\gamma}^*) \geq g(\bv{\gamma})+\rho\|\bv{\gamma}^*-\bv{\gamma}\|,  \label{eq:polyhedral-v}
\end{eqnarray}
i.e., the function $g(\bv{\gamma})$ also satisfies the polyhedral condition (\ref{eq:polyhedral}) with the same parameter $\rho$. Moreover, we have:
%Under this condition, we will show that the queue vector under $\mathtt{OLAC}$ is exponentially attracted to the vector $\bv{\theta}$.
%To get started, we first have:
\begin{eqnarray}
\tilde{g}(\tilde{\bv{\gamma}}^*(t)) &=& g(\tilde{\bv{\gamma}}^*(t)+\bv{\beta}(t)-\bv{\theta}) \nonumber\\
&\stackrel{(a)}{\geq}& g(\bv{\gamma}+\bv{\beta}(t)-\bv{\theta})+ \rho\|\tilde{\bv{\gamma}}^*(t)-\bv{\gamma}\|\nonumber\\
&=&\tilde{g}(\bv{\gamma})+\rho\|\tilde{\bv{\gamma}}^*(t)-\bv{\gamma}\|.\label{eq:exp-q-tilde}
\end{eqnarray}
Here (a) is because $\bv{\gamma}^* = \tilde{\bv{\gamma}}^*(t) + \bv{\beta}-\bv{\theta}$.
This shows that the function $\tilde{g}(\bv{\gamma})$ is also polyhedral.
It turns out that this polyhedral condition has a special attraction property, under which  the queue vector under $\mathtt{OLAC}$ and $\mathtt{OLAC2}$ will be exponentially attracted towards $\tilde{\bv{\gamma}}^*(t)$. This is shown in the  following important lemma:
\begin{lemma}\label{lemma:drift-g-tilde}
Suppose (\ref{eq:polyhedral}) holds. Then, there exist a constant $D_p\triangleq\frac{B-\eta^2}{2(\rho -\eta)} = \Theta(1)$ with $\eta<\rho$, and a finite time $T_0<\infty$, such that, with probability $1$,  for all $t\geq T_0$, if $\|\bv{q}(t)  -\tilde{\bv{\gamma}}^*(t)\|\geq D$,
\begin{eqnarray}
\hspace{-.0in}\expect{\|\bv{q}(t+1)  -\tilde{\bv{\gamma}}^*(t)\|\left|\right.\bv{q}(t)} \leq \|\bv{q}(t) - \tilde{\bv{\gamma}}^*(t)\|-\eta. \,\,\, \Diamond \label{eq:lac_exp_drift}
\end{eqnarray}
\end{lemma}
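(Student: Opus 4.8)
The plan is to establish a negative expected drift for the Lyapunov-type quantity $y(t)\triangleq\norm{\bv{q}(t)-\tilde{\bv{\gamma}}^*(t)}$ by first bounding the drift of its \emph{square} and then converting back to the norm via Jensen's inequality. Throughout I would condition on the history $\script{H}(t)$, which fixes $\bv{q}(t)$, the empirical multiplier $\bv{\beta}(t)$, and hence $\tilde{\bv{\gamma}}^*(t)=\bv{\gamma}^*-\bv{\beta}(t)+\bv{\theta}$, so the only fresh randomness is the state $S(t)\sim\bv{\pi}$. The first step is to pin down $T_0$: since $\tilde{\bv{\gamma}}^*(t)\to\bv{\theta}\succ\bv{0}$ with probability $1$ by (\ref{eq:gamma-conv-theta}) (a consequence of Lemma \ref{lemma:conv}), there is a w.p.1-finite time $T_0$ after which $\tilde{\bv{\gamma}}^*(t)\succeq\bv{0}$ componentwise. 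This nonnegativity is precisely what the drift machinery requires.

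Second, I would derive the squared one-slot drift. From the dynamics (\ref{eq:queuedynamic}) and the fact that $x\mapsto\max[x,0]$ is a nonexpansive projection onto $\mathbb{R}_+$, for every $j$ and every $\tilde{\gamma}^*_j(t)\geq0$ one has $(q_j(t+1)-\tilde{\gamma}^*_j(t))^2\leq(q_j(t)-\mu_j(t)+A_j(t)-\tilde{\gamma}^*_j(t))^2$. Summing over $j$ and expanding gives
\[
\norm{\bv{q}(t+1)-\tilde{\bv{\gamma}}^*(t)}^2\leq y(t)^2+\norm{\bv{A}(t)-\bv{\mu}(t)}^2+2(\bv{q}(t)-\tilde{\bv{\gamma}}^*(t))^T(\bv{A}(t)-\bv{\mu}(t)).
\]
The middle term $\norm{\bv{A}(t)-\bv{\mu}(t)}^2$ is bounded by the constant $B$, so the remaining work is to show the cross term is at most $-\rho\,y(t)$ in conditional expectation.

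Third, the engine of the proof, I would identify the cross term with a supergradient of the augmented dual. Since $\bv{q}(t)$ and $\tilde{\bv{\gamma}}^*(t)$ are $\script{H}(t)$-measurable, the conditional expectation of the cross term equals $(\bv{q}(t)-\tilde{\bv{\gamma}}^*(t))^T\bv{G}(t)$, where $\bv{G}(t)\triangleq\expect{\bv{A}(t)-\bv{\mu}(t)\,|\,\script{H}(t)}$. Observe that the $\mathtt{OLAC}$ action (\ref{eq:lac-eq}) minimizes, for the realized state, the per-state objective with weight $Q_j(t)=q_j(t)+\beta_j(t)-\theta_j$ from (\ref{eq:effective-LM}); comparing with (\ref{eq:dual_separable-aug}), this is exactly the per-state infimum defining $\tilde{g}$ evaluated at $\bv{\gamma}=\bv{q}(t)$. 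Averaging over $S(t)\sim\bv{\pi}$ then makes $\bv{G}(t)$ a supergradient of the concave function $\tilde{g}$ at the point $\bv{q}(t)$. Combining the supergradient inequality $\tilde{g}(\tilde{\bv{\gamma}}^*(t))\leq\tilde{g}(\bv{q}(t))+\bv{G}(t)^T(\tilde{\bv{\gamma}}^*(t)-\bv{q}(t))$ with the polyhedral bound (\ref{eq:exp-q-tilde}), $\tilde{g}(\tilde{\bv{\gamma}}^*(t))\geq\tilde{g}(\bv{q}(t))+\rho\,y(t)$, cancels $\tilde{g}(\bv{q}(t))$ and yields $(\bv{q}(t)-\tilde{\bv{\gamma}}^*(t))^T\bv{G}(t)\leq-\rho\,y(t)$.

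Finally, I would assemble and pass back to the norm. The three bounds combine to
\[
\expect{\norm{\bv{q}(t+1)-\tilde{\bv{\gamma}}^*(t)}^2\,|\,\script{H}(t)}\leq y(t)^2-2\rho\,y(t)+B,
\]
and fixing any $\eta\in(0,\rho)$ small enough that $D_p\geq\eta$, a direct comparison shows that whenever $y(t)\geq D_p=\frac{B-\eta^2}{2(\rho-\eta)}$ the right-hand side is at most $(y(t)-\eta)^2$; Jensen's inequality then delivers $\expect{\norm{\bv{q}(t+1)-\tilde{\bv{\gamma}}^*(t)}\,|\,\script{H}(t)}\leq y(t)-\eta$, which is the claim. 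The hard part will not be the drift algebra but the interplay with the time-inhomogeneous reference $\tilde{\bv{\gamma}}^*(t)$: one must (i) invoke Lemma \ref{lemma:conv} to guarantee $\tilde{\bv{\gamma}}^*(t)\succeq\bv{0}$ for $t\geq T_0$ so the nonexpansiveness step is legitimate, and (ii) correctly match the empirically-weighted $\mathtt{OLAC}$ action to a supergradient of the \emph{true-distribution} augmented dual $\tilde{g}$, reconciling the fact that the action uses the empirical $\bv{\beta}(t)$ while the expectation is taken under the true $\bv{\pi}$. It is exactly the augmented problem (\ref{eq:primal-aug})--(\ref{eq:g-tilde-eq}) that bridges these two and lets the polyhedral attraction be inherited by $\tilde{g}$; for $\mathtt{OLAC2}$ the same template applies with the simpler time-invariant reference $\tilde{\bv{\gamma}}^*(t)=\bv{\gamma}^*$, since its action (\ref{eq:lac2-eq}) uses the bare weight $\bv{q}(t)$.
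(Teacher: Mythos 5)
Your proposal is correct and follows essentially the same route as the paper's Appendix D: the squared-norm Lyapunov drift with the non-expansive projection, the identification of the conditional expected arrival-minus-service vector as a supergradient of the augmented dual $\tilde{g}$ at $\bv{q}(t)$, the polyhedral bound (\ref{eq:exp-q-tilde}), and the completion of the square to $(y(t)-\eta)^2$ followed by Jensen. Your explicit handling of the $\tilde{\bv{\gamma}}^*(t)\succeq\bv{0}$ requirement via Lemma \ref{lemma:conv} and of the condition $D_p\geq\eta$ needed for the square-root step are both consistent with (and slightly more careful than) the paper's argument.
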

\begin{proof}
See Appendix D.
\end{proof}

Lemma \ref{lemma:drift-g-tilde} states that, after some finite time $T_0$, if the  queue vector $\bv{q}(t)$ is of distance  $D=\Theta(1)$ away  from $\tilde{\bv{\gamma}}^*(t)$, there will be an $\Theta(1)$ drift ``pushing'' the queue size towards $\tilde{\bv{\gamma}}^*(t)$. This intuitively explains why $\bv{q}(t)$ mostly stays close to a fixed point. Note that proof of Lemma \ref{lemma:drift-g-tilde} is based on the augmented problem (\ref{eq:primal-aug}). 
% and  pushes it towards $\tilde{\bv{\gamma}}^*(t)$.
%
Below, we present the detailed performance results. Recall that all the results are obtained under Assumptions \ref{assumption:bdd-LM}, \ref{assumption:equal}, and \ref{assumption:unique}.  

\subsubsection{Performance of $\mathtt{OLAC}$}
We now analyze the performance of the $\mathtt{OLAC}$ algorithm. It is important to note that $\tilde{\bv{\gamma}}^*(t)$ is a function of $\bv{\beta}(t)$. Under $\mathtt{OLAC}$, however, the value of $\bv{\beta}(t)$ changes every time slot. Hence, analyzing the performance of $\mathtt{OLAC}$ is non-trivial.
Fortunately, using (\ref{eq:gamma-conv-theta}), we know that $\tilde{\bv{\gamma}}^*(t)$ eventually  converges to $\bv{\theta}$ with probability $1$. This allows us to prove the following theorem, which states that the average queue size under $\mathtt{OLAC}$ is mainly determined by the vector $\bv{\theta}$.
%if the value $\bv{\beta}(t)$ remains unchanged, then the optimal
\begin{theorem}\label{theorem:q-poly}
Suppose (i) $g_0(\bv{\gamma})$ is polyhedral with $\rho=\Theta(1)>0$, and (ii)  $\bv{Q}(t)$ has a countable state space under $\mathtt{OLAC}$. Then, under $\mathtt{OLAC}$, we have $w.p.1$ that:
\begin{eqnarray}
\overline{q}_{\text{av}} = \sum_j\theta_j + O(1). \label{eq:avg-q}
\end{eqnarray}
\end{theorem}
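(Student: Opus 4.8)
The plan is to translate the exponential attraction of Lemma \ref{lemma:drift-g-tilde} into an $O(1)$ bound on the expected deviation of the effective backlog $\bv{Q}(t)$ from $\bv{\gamma}^*$, and then recover the claimed queue bound from the identity $q_j(t)=Q_j(t)-\beta_j(t)+\theta_j$ coming from (\ref{eq:effective-LM}). The first observation is that, since $Q_j(t)=q_j(t)+\beta_j(t)-\theta_j$ and $\tilde{\bv{\gamma}}^*(t)=\bv{\gamma}^*-\bv{\beta}(t)+\bv{\theta}$, one has $\bv{q}(t)-\tilde{\bv{\gamma}}^*(t)=\bv{Q}(t)-\bv{\gamma}^*$. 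Writing $W(t)\triangleq\|\bv{Q}(t)-\bv{\gamma}^*\|=\|\bv{q}(t)-\tilde{\bv{\gamma}}^*(t)\|$, Lemma \ref{lemma:drift-g-tilde} states that for $t\geq T_0$ and $W(t)\geq D$ the quantity $\|\bv{q}(t+1)-\tilde{\bv{\gamma}}^*(t)\|$ has conditional drift at most $-\eta$ relative to the fixed target $\tilde{\bv{\gamma}}^*(t)$.

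The subtlety is that the target moves between slots, so $W(t+1)$ is measured against $\tilde{\bv{\gamma}}^*(t+1)$ rather than $\tilde{\bv{\gamma}}^*(t)$. By the triangle inequality $W(t+1)\leq\|\bv{q}(t+1)-\tilde{\bv{\gamma}}^*(t)\|+\|\tilde{\bv{\gamma}}^*(t+1)-\tilde{\bv{\gamma}}^*(t)\|$, and since $\tilde{\bv{\gamma}}^*(t)=\bv{\gamma}^*-\bv{\beta}(t)+\bv{\theta}$ the last term equals $\|\bv{\beta}(t+1)-\bv{\beta}(t)\|$. I would next argue that this increment vanishes: the empirical distribution $\bv{\pi}(t)$ changes by $O(1/t)$ per slot, and the polyhedral sharpness (\ref{eq:polyhedral-v}) makes the maximizer $\bv{\beta}(t)$ of $g(\bv{\beta},t)$ a stable function of $\bv{\pi}(t)$, so $\|\bv{\beta}(t+1)-\bv{\beta}(t)\|\to 0$ $w.p.1$ (consistent with Lemma \ref{lemma:conv} and (\ref{eq:gamma-conv-theta})). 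Hence there is a finite $T_1\geq T_0$ after which this increment is at most $\eta/2$ $w.p.1$, giving the moving-target drift $\expect{W(t+1)\,|\,\bv{q}(t)}\leq W(t)-\eta/2$ for $t\geq T_1$ and $W(t)\geq D$, while $|W(t+1)-W(t)|\leq B+\eta/2=O(1)$ always.

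With a uniform negative drift outside a bounded region and bounded increments, I would invoke a standard drift/attraction lemma of Foster--Lyapunov type, for which assumption (ii) that $\bv{Q}(t)$ has a countable state space guarantees the process is well-defined and the steady-state bound applies, to conclude $\frac{1}{t}\sum_{\tau=0}^{t-1}\expect{W(\tau)}=O(1)$ uniformly in $t$ (the finitely many slots before $T_1$ add only $O(1)$ on average). Finally, summing $\sum_j q_j(\tau)=\sum_j\theta_j+\big(\sum_j Q_j(\tau)-\sum_j\gamma^*_j\big)-\big(\sum_j\beta_j(\tau)-\sum_j\gamma^*_j\big)$ and dividing by $t$: the middle term is bounded by $\sqrt{r}\,W(\tau)$ whose time-average expectation is $O(1)$, and the last term has time-average expectation tending to $0$ because $\bv{\beta}(\tau)\to\bv{\gamma}^*$ $w.p.1$ and is bounded by Lemma \ref{lemma:beta-conv}, so dominated convergence plus Ces\`aro averaging kill it. Taking $\limsup_{t\to\infty}$ then yields $\overline{q}_{\text{av}}=\sum_j\theta_j+O(1)$.

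The main obstacle is the second paragraph: controlling the time-varying target $\tilde{\bv{\gamma}}^*(t)$ so that the single-slot drift of Lemma \ref{lemma:drift-g-tilde} can be iterated into a genuine long-run bound. This requires quantifying the per-slot movement $\|\bv{\beta}(t+1)-\bv{\beta}(t)\|$ through stability of the empirical dual maximizer under the polyhedral condition, and then verifying that this movement is dominated by the $\eta$ attraction so a time-inhomogeneous Lyapunov argument still goes through; assumption (ii) is precisely what lets us pass from the per-slot drift to the clean time-average bound.
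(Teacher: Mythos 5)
Your proposal is correct and follows essentially the same route as the paper's Appendix E: both rest on the attraction of Lemma \ref{lemma:drift-g-tilde} plus the convergence $\bv{\beta}(t)\rightarrow\bv{\gamma}^*$ of Lemma \ref{lemma:conv} to turn the drift toward the moving target $\tilde{\bv{\gamma}}^*(t)$ into a drift toward a fixed point after a finite time, and then read off the queue bound. The only differences are cosmetic: the paper re-centers the drift on $\bv{\theta}$ using $\|\bv{\beta}(t)-\bv{\gamma}^*\|\leq\epsilon$ and derives an exponential tail on $\|\bv{q}(\tau)-\bv{\theta}\|$, whereas you stay in the $\bv{Q}$-coordinates, control the per-slot target movement $\|\bv{\beta}(t+1)-\bv{\beta}(t)\|$ (which follows from the same Lemma \ref{lemma:conv}), and settle for a first-moment bound plus a final bookkeeping step -- both suffice for (\ref{eq:avg-q}).
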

\begin{proof}
See Appendix E.
\end{proof}

Theorem \ref{theorem:q-poly} shows that by using the decision making rule (\ref{eq:lac-eq}) with the effective backlog $\bv{Q}(t)$, one can  guarantee that the average queue size is roughly $\sum_j\theta_j$. Hence, by choosing $\theta_j=\Theta([\log(V)]^2)$, we recover the delay performance achieved in \cite{huangneely_dr_tac} and \cite{huang-lifo-ton}.

It is tempting to choose $\theta_j=0$, in which case one can indeed make the average queue size $\Theta(1)$. However,  as discussed before, the choice of $\bv{\theta}$ also affects the utility performance of $\mathtt{OLAC}$. Indeed, choosing a small $\bv{\theta}$  forces the system to run in an  \emph{over-provision} mode by always having an effectively backlog very close (or larger) to $\bv{\gamma}^*$. However,  in order to achieve a near-optimal utility performance, the control algorithm must be able to timeshare the over-provision mode and under-provision actions \cite{neelyenergydelay}. Therefore, it is necessary to use a larger $\bv{\theta}$ value.

In the following theorem, we show that an $O(1/V)$ close-to-optimal utility can be achieved as long as we choose $\theta_j=\Theta([\log(V)]^2)$ for all $j=1, ..., r$. Our analysis  is  different from the previous  Lyapunov analysis and will be useful for analyzing similar problems (Recall that $\epsilon=1/V$).
\begin{theorem}\label{theorem:cost}
Suppose the conditions in Theorem \ref{theorem:q-poly} hold.
Then,  with a sufficiently large $V$ and $\theta_j=[\log(V)]^2$ for all $j$, we have $w.p.1$ that:
\begin{eqnarray}
f^{\mathtt{OLAC}}_{\text{av}} = f_{\text{av}}^* + O(\frac{1}{V}). \label{eq:avg-u}
\end{eqnarray}
\end{theorem}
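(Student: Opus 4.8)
The plan is to run a drift-plus-penalty argument on the quadratic Lyapunov function $L(t)=\frac12\|\bv{q}(t)\|^2$, but to charge the control rule (\ref{eq:lac-eq}) against the \emph{effective} backlog $\bv{Q}(t)$ rather than $\bv{q}(t)$, and then to absorb the mismatch $\bv{\beta}(t)-\bv{\theta}$ into an error term that the exponential attraction of Lemma \ref{lemma:drift-g-tilde} will render negligible. Let $\mathcal{F}_t$ be the history through slot $t$, so that $\bv{q}(t)$ and $\bv{\beta}(t)$ (hence $\bv{Q}(t)$ via (\ref{eq:effective-LM})) are $\mathcal{F}_t$-measurable while $S(t)$ is independent of $\mathcal{F}_t$. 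Squaring (\ref{eq:queuedynamic}) and conditioning gives the standard bound $\expect{L(t+1)-L(t)\,|\,\mathcal{F}_t}\le B+\sum_j q_j(t)\expect{A_j(t)-\mu_j(t)\,|\,\mathcal{F}_t}$. Adding $\expect{Vf(t)|\mathcal{F}_t}$ and substituting $q_j(t)=Q_j(t)-(\beta_j(t)-\theta_j)$, the right-hand side splits into the Backpressure objective of (\ref{eq:lac-eq}) plus a correction term. Since \olac{} picks, for the realized state, the action minimizing $Vf+\sum_j Q_j[A_j-\mu_j]$, I would compare it against the optimal state-only randomized policy of Assumption \ref{assumption:equal}; that policy attains cost $f^*_{\text{av}}$ and zero expected drift, and $\bv{Q}(t)$ is $\mathcal{F}_t$-measurable, so the Backpressure expectation is at most $Vf^*_{\text{av}}$. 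This yields, for all $t$,
\begin{align*}
&\expect{L(t+1)-L(t)|\mathcal{F}_t}+\expect{Vf(t)|\mathcal{F}_t}\\
&\quad\le B+Vf^*_{\text{av}}-\sum_j(\beta_j(t)-\theta_j)\expect{A_j(t)-\mu_j(t)\,|\,\mathcal{F}_t}.
\end{align*}

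Next I would telescope: summing over $\tau=0,\dots,t-1$, taking expectations, dividing by $Vt$, and letting $t\to\infty$. Using $\expect{L(t)}/t\to0$ — which holds because Lemma \ref{lemma:drift-g-tilde} forces $\expect{\|\bv{q}(t)\|^2}$ to be uniformly $O(\mathrm{poly}(V))$ — this gives
\begin{align*}
&f^{\mathtt{OLAC}}_{\text{av}}\le f^*_{\text{av}}+\frac{B}{V}-\frac{1}{V}\mathcal{E},\\
&\mathcal{E}\triangleq\limsup_{t\to\infty}\frac1t\sum_{\tau=0}^{t-1}\sum_j\expect{(\beta_j(\tau)-\theta_j)(A_j(\tau)-\mu_j(\tau))}.
\end{align*}
As $B/V=O(1/V)$, it remains only to show $\mathcal{E}\ge-O(1)$, so that $-\mathcal{E}/V=O(1/V)$.

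Bounding $\mathcal{E}$ is the main obstacle, and this is where the attraction property and the choice $\theta_j=[\log V]^2$ enter. I would write, from (\ref{eq:queuedynamic}), $A_j(\tau)-\mu_j(\tau)=(q_j(\tau+1)-q_j(\tau))-\iota_j(\tau)$, where $\iota_j(\tau)\ge0$ is the unused service, strictly positive only when $q_j(\tau)<\delta_{\max}$. The telescoping contribution $\sum_j(\beta_j(\tau)-\theta_j)(q_j(\tau+1)-q_j(\tau))$ vanishes under time averaging: Abel summation turns it into boundary terms scaled by $1/t$ plus $\sum_\tau q_j(\tau)(\beta_j(\tau)-\beta_j(\tau-1))$, and since $\bv{\beta}(\tau)\to\bv{\gamma}^*$ (Lemma \ref{lemma:conv}) its increments tend to $0$ while $\expect{q_j(\tau)^2}$ stays uniformly bounded, so a Cauchy--Schwarz/Ces\`aro argument kills the average. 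The idling contribution is the crux: using $0\le\beta_j(\tau)-\theta_j\le\xi+\theta_j=O(V)$ (Lemma \ref{lemma:beta-conv}) and $\iota_j(\tau)\le\delta_{\max}$, it is at most $O(V)\cdot\prob{q_j(\tau)<\delta_{\max}}$. Because $\tilde{\bv{\gamma}}^*(t)\to\bv{\theta}=[\log V]^2\bv{1}$ by (\ref{eq:gamma-conv-theta}) and Lemma \ref{lemma:drift-g-tilde} gives a geometric tail of $\bv{q}(\tau)$ around $\tilde{\bv{\gamma}}^*(\tau)$, the event $\{q_j(\tau)<\delta_{\max}\}$ demands a deviation of order $[\log V]^2$ and hence has probability $e^{-\Theta([\log V]^2)}=V^{-\Theta(\log V)}$. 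Thus each idling term is $O(V)\cdot V^{-\Theta(\log V)}=o(1)$, so $\mathcal{E}=o(1)\ge-O(1)$.

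Combining the pieces, $f^{\mathtt{OLAC}}_{\text{av}}\le f^*_{\text{av}}+O(1/V)$, while the reverse inequality $f^{\mathtt{OLAC}}_{\text{av}}\ge f^*_{\text{av}}$ is immediate from the stability of \olac{} (Theorem \ref{theorem:q-poly}) together with the definition of $f^*_{\text{av}}$, establishing (\ref{eq:avg-u}). I expect the delicate point to be making the geometric-tail bound \emph{time-uniform} despite the moving target $\tilde{\bv{\gamma}}^*(\tau)$ in (\ref{eq:lac_exp_drift}): one must convert the per-slot drift of Lemma \ref{lemma:drift-g-tilde} into a uniform-in-$\tau$ exponential concentration, which is exactly what forces $\theta_j$ to be super-logarithmic (so that $Ve^{-\Theta(\theta_j)}\to0$), and $[\log V]^2$ is the choice that simultaneously secures this error bound and the $O([\log(1/\epsilon)]^2)$ delay of Theorem \ref{theorem:q-poly}.
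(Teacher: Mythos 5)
Your proposal follows essentially the same route as the paper's Appendix F: the same quadratic Lyapunov drift-plus-penalty bound, the same substitution $q_j(t)=Q_j(t)-(\beta_j(t)-\theta_j)$ to exploit that \olac{} minimizes against the effective backlog, the same comparison with the stationary randomized policy of Assumption \ref{assumption:equal}, and the same key mechanism for the residual term --- your decomposition of $A_j-\mu_j$ into a queue increment plus unused service, with the idling probability bounded by $e^{-\Theta([\log V]^2)}$ via the exponential attraction to $\bv{\theta}$, is exactly the content of the paper's Lemma \ref{lemma:avg-rate} bounding $\overline{\mu}_j^{\mathtt{OLAC}}-\overline{A}_j^{\mathtt{OLAC}}$. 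The only cosmetic differences (your Abel-summation handling of the time-varying $\bv{\beta}(\tau)$ versus the paper's freezing of $\beta_j(t)-\theta_j$ near $\gamma_j^*-\theta_j$ after a finite time, and your sign slip in asserting $\beta_j(\tau)\geq\theta_j$, which is harmless since only the magnitude bound is used) do not change the argument.
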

\begin{proof}
See Appendix F.
\end{proof}
Theorems \ref{theorem:q-poly} and \ref{theorem:cost} together show that $\mathtt{OLAC}$ achieves the $[O(1/V), O([\log(V)]^2)]$ utility-delay tradeoff for general stochastic optimization problems. Compared to the algorithms developed in \cite{huangneely_dr_tac} \cite{huang-lifo-ton}, which also achieve the same tradeoff, $\mathtt{OLAC}$ preserves using the First-In-First-Out (FIFO) queueing discipline and does not require any pre-learning phase. Thus, it is more suitable for practical implementations.

\subsubsection{Performance of $\mathtt{OLAC2}$}
We now present the performance results of $\mathtt{OLAC2}$. Since $\mathtt{OLAC2}$ is equivalent to LIFO-backpressure \cite{huang-lifo-ton} except for the learning step and a finite backlog adjustment at time $t=T_l$, its performance is the same as LIFO-Backpressure. \footnote{Note that $\mathtt{OLAC2}$ may need to discard packets in the backlog dropping step. However,  such an event happens with very small probability, as can be seen in the proof of Theorem \ref{theorem:lac2-convergence}.}

The following theorem from \cite{huang-lifo-ton} summarizes the results.
\begin{theorem}\label{theorem:lac2}
Suppose (i) $g_0(\bv{\gamma})$ is polyhedral with $\rho=\Theta(1)>0$, and (ii)  $\bv{q}(t)$ has a countable state space under $\mathtt{OLAC2}$. Then,  with a sufficiently large $V$,
\begin{enumerate}
\item[1)] The utility under $\mathtt{OLAC2}$ satisfies
$f^{\mathtt{OLAC2}}_{\text{av}} = f_{\text{av}}^* + O(\frac{1}{V})$. 
%\end{eqnarray}

\item[2)] For any queue $j$ with a time average input rate $\lambda_j>0$,  there are a subset of packets from the arrivals that eventually depart the queue and have an average rate $\tilde{\lambda}_j$ that satisfies:
\begin{eqnarray}
\lambda_j \geq \tilde{\lambda}_j \geq \left[\lambda_j - O(\frac{1}{V^{\log(V)}}) \right]^+. \label{eq:rate-major}
\end{eqnarray}
Moreover, the average delay of these packets is $O(\frac{[\log(V)]^2}{\tilde{\lambda}_j})$.
%, where:
%\[ W_{\text{bound}}\, \defequiv\, [2D + 2K[\log(V)]^2+\delta_{\max}]/\tilde{\lambda}_j. \]
\end{enumerate}
\end{theorem}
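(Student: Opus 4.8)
The plan is to reduce both parts of Theorem \ref{theorem:lac2} to the established performance guarantees of the LIFO-Backpressure algorithm in \cite{huang-lifo-ton}. The first observation I would make is that the action-selection rule (\ref{eq:lac2-eq}) of $\mathtt{OLAC2}$ is \emph{identical} to that of plain Backpressure (\ref{eq:QLAeq}): it weights the rate-arrival difference by the raw backlog $q_j(t)$, with no $\bv{\beta}(t)$ or $\bv{\theta}$ term. Consequently $\mathtt{OLAC2}$ coincides exactly with LIFO-Backpressure except for a single backlog-resetting event at the finite time $t=T_l=V^c$, at which $\bv{q}(T_l)$ is overwritten by the learned multiplier $\tilde{\bv{\beta}}$. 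The whole argument therefore amounts to showing that this one-time perturbation is asymptotically harmless, after which the two claims follow from the LIFO-Backpressure analysis applied to the trajectory on $t\geq T_l$.

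For part 1, I would argue that the time-average utility is insensitive to the adjustment at $T_l$. Since $f^{\mathtt{OLAC2}}_{\text{av}}$ is a $\limsup$ of empirical averages and all costs are bounded by $\delta_{\max}$, the contribution of the first $T_l=V^c$ slots to the long-run average vanishes. For $t\geq T_l$ the dynamics are exactly LIFO-Backpressure started from the finite initial backlog $\bv{q}(T_l)=\tilde{\bv{\beta}}$, and the utility bound $f_{\text{av}}=f_{\text{av}}^*+O(1/V)$ of \cite{huang-lifo-ton} holds for any finite starting backlog because the underlying Lyapunov-drift argument is initial-condition-free. The packets discarded in the adjustment step number at most $\sum_j\tilde{\beta}_j$, which is $O(V)$ with probability $1$ for $T_l\geq T_{\epsilon_s}$ by Lemma \ref{lemma:beta-conv}; being a one-time finite loss, they do not affect the average cost. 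This gives claim 1).

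For part 2, I would invoke the LIFO delay analysis of \cite{huang-lifo-ton} almost verbatim. Its engine is the exponential attraction of $\bv{q}(t)$ toward the fixed point $\bv{\gamma}^*=\Theta(V)$ under the polyhedral assumption --- the $\mathtt{OLAC2}$ analog of Lemma \ref{lemma:drift-g-tilde} specialized to $\bv{\beta}=\bv{\theta}=\bv{0}$, equivalently the attraction result of \cite{huangneely_dr_tac}. Because the backlog concentrates around $\bv{\gamma}^*$ with a tail of width $O([\log(V)]^2)$, under LIFO any packet that arrives while the queue sits in the top $O([\log(V)]^2)$ band is served quickly and incurs delay $O([\log(V)]^2/\tilde{\lambda}_j)$; the complementary ``trapped'' packets, corresponding to the queue dipping far below its typical level, have arrival rate bounded by the exponential tail probability, namely $O(1/V^{\log(V)})$, yielding the rate sandwich (\ref{eq:rate-major}). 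I would supply this by checking that the hypotheses of the relevant theorem of \cite{huang-lifo-ton} (polyhedral dual, countable state space) hold for the post-$T_l$ process.

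The main obstacle, and the step I would treat with most care, is making precise that the reset at $T_l$ is harmless not merely for the utility but for the delay guarantee. Overwriting the LIFO stack by $\tilde{\bv{\beta}}$ could in principle leave a macroscopic mass of old or dropped packets that never depart, which would corrupt the departure-rate bound (\ref{eq:rate-major}). To rule this out I would use the fact that at $t=T_l=V^c$ the empirical multiplier $\tilde{\bv{\beta}}$ is already close to $\bv{\gamma}^*$ with high probability --- this is exactly the concentration of $\bv{\pi}(T_l-1)$ around $\bv{\pi}$ together with the convergence statements of Corollary \ref{corollary:uniform-conv} and Lemma \ref{lemma:conv} --- so the system enters its near-steady configuration rather than starting from an empty queue. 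The quantitative form of this ``$\tilde{\bv{\beta}}$ is near $\bv{\gamma}^*$ at a polynomially small time'' estimate is precisely the content deferred to Theorem \ref{theorem:lac2-convergence}; the cleanest route is thus to isolate all the ``adjustment is negligible'' reasoning into that high-probability finite-time estimate and to quote the steady-state LIFO-Backpressure guarantees of \cite{huang-lifo-ton} for everything that happens afterward.
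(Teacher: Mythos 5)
Your proposal takes essentially the same route as the paper: the paper's entire proof is a citation to \cite{huang-lifo-ton}, justified by the preceding observation that $\mathtt{OLAC2}$ coincides with LIFO-Backpressure except for the dual-learning step and a one-time finite backlog adjustment at $t=T_l$, with a footnote deferring the "packet dropping is rare" point to the proof of Theorem \ref{theorem:lac2-convergence} --- exactly the reduction and the deferral you describe. Your write-up simply makes explicit the bookkeeping (boundedness of the one-time perturbation, initial-condition independence of the drift argument) that the paper leaves implicit.
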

\begin{proof}
See \cite{huang-lifo-ton}.
\end{proof}
Theorem \ref{theorem:lac2} shows that if a queue $q_j(t)$ has an input rate  $\lambda_j=\Theta(1)$, then, under $\mathtt{OLAC2}$, almost all the packets going through $q_j(t)$ will experience only $O([\log(V)]^2)$ delay. Applying this argument to every queue in the network, we see that the average network delay is roughly $O(r[\log(V)]^2)$.

\section{Convergence Time Analysis}\label{section:convergence}
In this section, we study another important performance metric of our techniques, the convergence time. Convergence time measures how fast an algorithm reaches its steady-state. Hence, it is an important indicator of the robustness of the technique.
However, it turns out that due to the complex nature of the $\bv{\beta}(t)$ process   and the structure of the systems, the convergence properties of the algorithms are hard to analyze. Therefore, we mainly focus on analyzing the convergence time of  $\mathtt{OLAC2}$. %\footnote{Its convergence time under the locally-smooth case can be analyzed with similar arguments but is more involved. }
We now give the formal definition of the convergence time of an algorithm. In the definition, we use $\bv{\gamma}(t)$ to denote the estimated Lagrange multiplier under the control schemes, e.g., $\bv{q}(t)$ under Backpressure and $\mathtt{OLAC2}$  and $\bv{\beta}(t)-\bv{\theta}+\bv{q}(t)$ under $\mathtt{OLAC}$.
\begin{definition}
Let $\zeta>0$ be a given constant.  The $\zeta$-convergence time of the control algorithm, denoted by $T_{\zeta}$, is the time it takes for the estimated Lagrange multiplier $\bv{\gamma}(t)$ to get to within $\zeta$ distance of $\bv{\gamma}^*$, i.e.,
\begin{eqnarray}
T_{\zeta}\triangleq\inf\{t\,|\, ||\bv{\gamma}(t)-\bv{\gamma}^*||\leq\zeta\}.\,\,\,\Diamond\label{eq:con-time}
\end{eqnarray}
\end{definition}
%Here Under the Backpressure algorithm, we have $\bv{\gamma}(k)=\bv{q}(k)$, whereas under $\mathtt{OLAC}$, we have $\bv{\gamma}(k)=\bv{\beta}(k)+\bv{\theta}+\bv{q}(k)$.
Note that our definition is different from the definition in \cite{li-convergence-13}, where  convergence time relates to how fast the time-average rates converge to the optimal values. In our case, the convergence time definition  (\ref{eq:con-time}) is motivated by the fact that Backpressure, $\mathtt{OLAC}$, and $\mathtt{OLAC2}$ all use functions of the queue vector to estimate $\bv{\gamma}^*$, which is the key for determining the optimal control actions. Hence, the faster the algorithm learns $\bv{\gamma}^*$, the faster the system enters the optimal operating zone.

In the following, we present the convergence results of $\mathtt{OLAC2}$.
As a benchmark comparison, we first study the convergence time of the Backpressure algorithm. Since both Backpressure and $\mathtt{OLAC2}$ use the actual queue size as the estimate of $\bv{\gamma}^*$, we will analyze the convergence time with $\bv{\gamma}(t)=\bv{q}(t)$.
%Different from  we define the convergence time of an algorithm as the time it takes for it to be making near-optimal control actions. Specifically, denote $\bv{\gamma}(t)$  the estimated Lagrange multiplier under the control scheme, i.e., $\bv{q}(t)$ under Backpressure and $\mathtt{OLAC2}$  and $\bv{\beta}(t)+\bv{\theta}+\bv{q}(t)$ under $\mathtt{OLAC}$. %Then, we define convergence time formally as follows.

%\subsection{Convergence time}

\subsection{Convergence time of Backpressure}
We start by analyzing the convergence time of Backpressure. Our result is summarized in the following theorem.
\begin{theorem}\label{theorem:convergence-bp}
Suppose that $g_0(\bv{\gamma})$ is polyhedral with parameter $\rho=\Theta(1)>0$. Then,
\begin{eqnarray}
\big(\|\bv{q}(0)-\bv{\gamma}^*\| - D_p  \big)^+/B \leq \expect{T_{D_p}} \leq \|\bv{q}(0)-\bv{\gamma}^*\|/\eta.
\end{eqnarray}
Here $\eta\in[0, \rho)=\Theta(1)$ and  $D_p=\frac{B-\eta^2}{2(\rho -\eta)}=\Theta(1)$. $\Diamond$
\end{theorem}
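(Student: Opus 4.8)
The plan is to bound $T_{D_p}$ from both sides by constructing appropriate supermartingale-type arguments around the quantity $\|\bv{q}(t) - \bv{\gamma}^*\|$. The key observation is that under Backpressure we have $\bv{\gamma}(t) = \bv{q}(t)$, and since $\bv{\beta}(t) \equiv \bv{0}$ and $\bv{\theta} = \bv{0}$ in the pure Backpressure case, the effective backlog coincides with the true backlog and $\tilde{\bv{\gamma}}^*(t) = \bv{\gamma}^*$. This means Lemma \ref{lemma:drift-g-tilde} specializes directly: whenever $\|\bv{q}(t) - \bv{\gamma}^*\| \geq D_p$, we get a negative drift $\expect{\|\bv{q}(t+1) - \bv{\gamma}^*\| \,|\, \bv{q}(t)} \leq \|\bv{q}(t) - \bv{\gamma}^*\| - \eta$. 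The upper bound on $\expect{T_{D_p}}$ should follow from a standard optional-stopping / Dynkin-type argument applied to this negative drift.

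First I would establish the upper bound. Consider the stopped process $Y(t) = \|\bv{q}(t \wedge T_{D_p}) - \bv{\gamma}^*\|$. By Lemma \ref{lemma:drift-g-tilde}, before stopping, each step decreases the conditional expectation of the distance by at least $\eta$. Summing the drift inequality over $t = 0, \ldots, T_{D_p}-1$ and taking expectations (justified by the uniform boundedness of the one-step queue change, $\|\bv{\mu}(t) - \bv{A}(t)\| \leq B$), one obtains $\expect{\|\bv{q}(T_{D_p}) - \bv{\gamma}^*\|} \leq \|\bv{q}(0) - \bv{\gamma}^*\| - \eta\, \expect{T_{D_p}}$. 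Since the left-hand side is nonnegative, rearranging yields $\expect{T_{D_p}} \leq \|\bv{q}(0) - \bv{\gamma}^*\|/\eta$, which is the claimed upper bound.

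Next I would establish the lower bound using the fact that the per-slot displacement is bounded by $B$. Because $|\,\|\bv{q}(t+1) - \bv{\gamma}^*\| - \|\bv{q}(t) - \bv{\gamma}^*\|\,| \leq \|\bv{q}(t+1) - \bv{q}(t)\| \leq B$ by the triangle inequality and the bound on $\|\bv{\mu}(t) - \bv{A}(t)\|$, the distance to $\bv{\gamma}^*$ can shrink by at most $B$ in a single slot. Therefore, to travel from initial distance $\|\bv{q}(0) - \bv{\gamma}^*\|$ down to within $D_p$, the process needs at least $(\|\bv{q}(0) - \bv{\gamma}^*\| - D_p)^+ / B$ slots \emph{deterministically}, and hence the same lower bound holds for $\expect{T_{D_p}}$. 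This half requires no probabilistic subtlety; it is purely a Lipschitz-in-time argument.

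The main obstacle is making the optional-stopping step in the upper bound fully rigorous: one must verify that $\expect{T_{D_p}} < \infty$ before the telescoping/rearrangement is valid, and that the interchange of summation and expectation is legitimate. I would handle this by first invoking the negative drift to argue finiteness of $\expect{T_{D_p}}$ (a standard consequence of a uniform negative drift bounded away from zero together with bounded increments, e.g., via a Foster--Lyapunov or Hajek-type estimate), and only then carrying out the telescoping. A secondary technical point is that Lemma \ref{lemma:drift-g-tilde} is stated to hold only for $t \geq T_0$; in the pure Backpressure setting $\bv{\beta}(t)$ plays no role, so I expect $T_0$ can be taken to be $0$ (or the finite-time contribution absorbed into the $O(1)$ terms), but this reduction should be stated explicitly so that the drift applies from the start.
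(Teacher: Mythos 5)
Your proposal is correct and follows essentially the same route as the paper: the paper also specializes Lemma \ref{lemma:drift-g-tilde} to Backpressure (where $\tilde{\bv{\gamma}}^*(t)=\bv{\gamma}^*$ and the drift holds from $t=0$), obtains the upper bound from the negative drift via a hitting-time lemma (Lemma \ref{lemma:exp-time}, cited from \cite{bertsekasoptbook}, which encapsulates exactly the optional-stopping/telescoping argument you re-derive inline), and gets the lower bound from the deterministic per-slot displacement bound $\|\bv{q}(t+1)-\bv{q}(t)\|\leq B$. Your explicit attention to the finiteness of $\expect{T_{D_p}}$ and to the $T_0$ issue is sound and slightly more careful than the paper's presentation, but it is not a different proof.
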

\begin{proof}
See Appendix G.
\end{proof}
Since $\bv{\gamma}^*=\Theta(V)$, if  $\bv{q}(0)=\Theta(1)$, e.g., $\bv{q}(0)=\bv{0}$, Theorem \ref{theorem:convergence-bp} states that the expected convergence time of Backpressure is $\Theta(V)$.
This is consistent with the results that have been observed in previous works, e.g., \cite{neelynowbook} and \cite{huangneely_dr_tac}.

%\subsubsection{Convergence of $\mathtt{OLAC2}$ in the polyhedral case}

%

\subsection{Convergence time of OLAC2} %  $\large{\mathtt{OLAC2}}$}
We now study the convergence time of the  $\mathtt{OLAC2}$ scheme. %In this case, the convergence time definition becomes:
%\begin{eqnarray}
%T_{\zeta}\triangleq\inf\{t\,|\, || \bv{q}(t)-\bv{\gamma}^*||\leq\zeta\}.\label{eq:con-time-lac}
%\end{eqnarray}
Notice that under $\mathtt{OLAC2}$ the convergence of $\bv{q}(t)$ to $\bv{\gamma}^*$ depends on both $\tilde{\bv{\beta}}$ and the dynamics of  the queue vector $\bv{q}(t)$. %Hence, it is much more challenging to analyze.
The following theorem characterizes the convergence time of $\mathtt{OLAC2}$.
\begin{theorem}\label{theorem:lac2-convergence}
Suppose (i) $g_0(\bv{\gamma})$ is polyhedral with $\rho=\Theta(1)>0$, and (ii) $\bv{q}(t)$ has a countable state space under $\mathtt{OLAC2}$. Then,  with a sufficiently large $V$, we have  with probability of at least $1- \frac{M}{V^{4\log(V)}}$ that, under $\mathtt{OLAC2}$,
\begin{eqnarray}
\expect{T_{D_p}} =  O(V^{1-c/2}\log(V) + V^c). \quad\Diamond\label{eq:lac2-convergence}
\end{eqnarray}
\end{theorem}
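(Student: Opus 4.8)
The plan is to split the convergence time of $\mathtt{OLAC2}$ at the learning time $T_l=V^c$ into two phases and bound each separately. Phase one, $t\in[0,T_l]$, is the pure LIFO-Backpressure warm-up, whose contribution to $T_{D_p}$ is the deterministic $V^c$ — this already accounts for the second term in (\ref{eq:lac2-convergence}). Phase two, $t\geq T_l$, is the drift-driven approach to $\bv{\gamma}^*$ starting from the one-shot adjusted backlog $\bv{q}(T_l)=\tilde{\bv{\beta}}$. So $T_{D_p}\leq T_l + (\text{phase-two time})$, and the whole argument reduces to controlling the starting distance $\|\bv{q}(T_l)-\bv{\gamma}^*\|$ and then running a drift bound.

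First I would establish the high-probability event. Since $S(0),\ldots,S(T_l-1)$ are i.i.d.\ and $T_l=V^c$ of them form $\bv{\pi}(T_l-1)$, Hoeffding's inequality gives $\prob{|\delta_{s_i}(T_l)|\geq\varepsilon}\leq 2\exp(-2T_l\varepsilon^2)$ for each state. Choosing $\varepsilon=\Theta(\log(V)/V^{c/2})$ and taking a union bound over the $M$ states defines an event $\mathcal{E}$ on which $\max_{s_i}|\delta_{s_i}(T_l)|=O(\log(V)/V^{c/2})$, with $\prob{\mathcal{E}}\geq 1-M/V^{4\log(V)}$; the constant in $\varepsilon$ is dictated by forcing $2T_l\varepsilon^2\geq 4[\log(V)]^2$, which is exactly where the $[\log(V)]^2$ scaling enters and produces the claimed failure probability.

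Next, on $\mathcal{E}$, I would translate this $\ell_\infty$ distribution error into an $\ell_2$ error on the empirical maximizer $\tilde{\bv{\beta}}$. Because $\tilde{\bv{\beta}}$ maximizes the empirical dual $g(\cdot,T_l)$, I would sandwich the value gap $g(\bv{\gamma}^*)-g(\tilde{\bv{\beta}})$ by inserting $\pm g(\bv{\gamma}^*,T_l)$ and $\pm g(\tilde{\bv{\beta}},T_l)$; empirical optimality kills the middle term, and Corollary \ref{corollary:uniform-conv} bounds the two surviving terms by $2\max_{s_i}|\delta_{s_i}(T_l)|M(Vf_{\max}+r\xi B)$. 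Since $\xi=Vf_{\max}/\eta_0$, this prefactor is $\Theta(V)$. Combining with the polyhedral lower bound (\ref{eq:polyhedral-v}), $g(\bv{\gamma}^*)-g(\tilde{\bv{\beta}})\geq\rho\|\bv{\gamma}^*-\tilde{\bv{\beta}}\|$, yields $\|\bv{\gamma}^*-\tilde{\bv{\beta}}\|=O(V)\cdot O(\log(V)/V^{c/2})=O(V^{1-c/2}\log(V))$. Since the adjustment sets $\bv{q}(T_l)=\tilde{\bv{\beta}}$, we get $\|\bv{q}(T_l)-\bv{\gamma}^*\|=O(V^{1-c/2}\log(V))$ on $\mathcal{E}$; and because $\bv{\gamma}^*=\Theta(V)$ while the error is $o(V)$, we have $\tilde{\bv{\beta}}=\Theta(V)$, confirming the packet-dropping remark.

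Finally, for phase two I would treat $T_l$ as a fresh start and reuse the drift machinery. Since $\mathtt{OLAC2}$ runs plain (LIFO-)Backpressure for $t>T_l$ via (\ref{eq:lac2-eq}) with the bare queue $\bv{q}(t)$, and queue contents are scheduling-discipline independent, the augmented attractor degenerates to $\tilde{\bv{\gamma}}^*(t)=\bv{\gamma}^*$, so Lemma \ref{lemma:drift-g-tilde} with $\bv{\beta}=\bv{\theta}=\bv{0}$ (equivalently the drift used in Theorem \ref{theorem:convergence-bp}) gives, for all $t\geq\max(T_0,T_l)$ with $\|\bv{q}(t)-\bv{\gamma}^*\|\geq D_p$, the uniform drift $\expect{\|\bv{q}(t+1)-\bv{\gamma}^*\|\,|\,\bv{q}(t)}\leq\|\bv{q}(t)-\bv{\gamma}^*\|-\eta$. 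The same optional-stopping argument as in Theorem \ref{theorem:convergence-bp}, applied from the restart point, then gives $\expect{T_{D_p}-T_l\,|\,\mathcal{E}}\leq\|\bv{q}(T_l)-\bv{\gamma}^*\|/\eta=O(V^{1-c/2}\log(V))$, and adding the deterministic $T_l=V^c$ yields $\expect{T_{D_p}\,|\,\mathcal{E}}=O(V^c+V^{1-c/2}\log(V))$, which is (\ref{eq:lac2-convergence}). The main obstacle is the third step: converting an $\ell_\infty$ empirical-distribution error into an $\ell_2$ error on the dual maximizer. This is possible only because the polyhedral condition supplies linear curvature of $g$ around $\bv{\gamma}^*$ (a mere value gap gives no control on $\|\tilde{\bv{\beta}}-\bv{\gamma}^*\|$ without it), and it is precisely here that the $\Theta(V)$ amplification through $Vf_{\max}$ and $\xi$ produces the $V^{1-c/2}$ factor.
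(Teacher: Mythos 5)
Your proposal is correct and follows essentially the same route as the paper's proof: a Chernoff/Hoeffding-type concentration bound with a union bound over the $M$ states to get the $1-M/V^{4\log(V)}$ event, the sandwich argument through the empirical dual plus the polyhedral condition to convert the $O(\log(V)/V^{c/2})$ distribution error into $\|\tilde{\bv{\beta}}-\bv{\gamma}^*\|=O(V^{1-c/2}\log(V))$ (this is exactly Lemma \ref{lemma:beta-rate}), and then the drift-plus-stopping-time argument of Lemma \ref{lemma:drift-g-tilde} and Lemma \ref{lemma:exp-time} restarted at $T_l$. The only cosmetic difference is your use of Hoeffding in place of the paper's cited concentration theorem, which changes nothing beyond constants.
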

 \begin{proof}
 See Appendix H.
 \end{proof}
%Note here that the probability can indeed be very close to $1$ even for small $V$ values. For instance, suppose there are $M=10^4$ states. Then, one only needs to choose
Choosing $c= \frac{2}{3}$, we notice that with very high probability, the system under $\mathtt{OLAC2}$  will enter the near-optimal state in only $O(V^{2/3}\log(V))$ time! This is in  contrast to the Backpressure algorithm, whose convergence time is $\Theta(V)$ as shown in Theorem \ref{theorem:convergence-bp}.    %.  , and demonstrates   the power of  learning in stochastic network optimization.

\section{Simulation}\label{section:simulation}
In this section, we provide simulation results for $\mathtt{OLAC}$ and $\mathtt{OLAC2}$ to demonstrate both the near-optimal performance and the the superior convergence rate. We consider a $2$-queue system depicted in Fig. \ref{fig:2q}. 
\begin{figure}[cht]
\begin{center}
\vspace{-.06in}
\includegraphics[width=2.0in, height=0.8in]{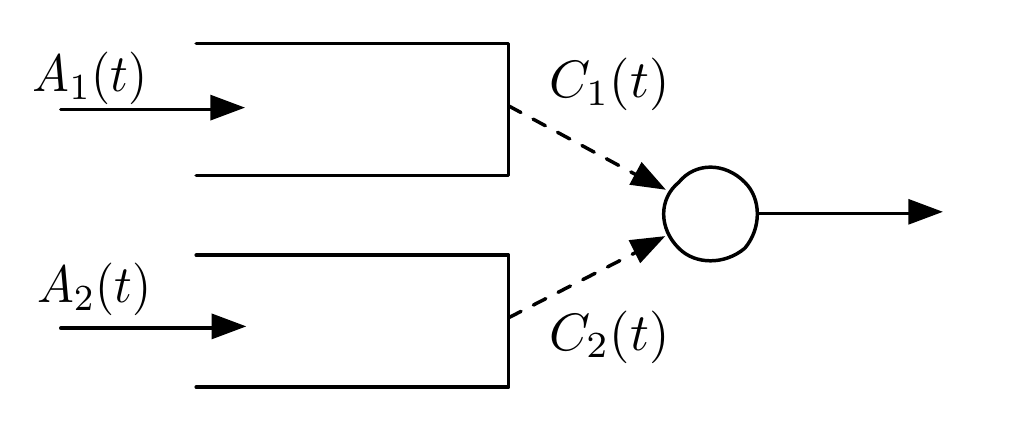}
\vspace{-.1in}
\caption{A $2$-queue system. In this system, each queue receives random arrivals. The server can only serve one queue at a time. }
\label{fig:2q}
\end{center}
\vspace{-.2in}
\end{figure}

Here $A_j(t)$ denotes the number of arriving packets to queue $j$ at time $t$. We assume that $A_j(t)$ is i.i.d. with either $2$ or $0$ with probabilities $p_j$ and $1-p_j$, where $p_1=0.3$ and $p_2=0.4$. In this case, we see that $\lambda_1=0.6$ and $\lambda_2=0.8$. We assume that each queue has a time-varying channel and denote its state by $C_j(t)$.  $C_j(t)$ takes value in the set $\script{C} = \{0, 2, 4, 6\}$. 
% $\script{C} = \{\frac{nc_{\max}}{N_s}, n=0, ..., N_s\}$ with $N_s=4$ and $c_{\max}=6$.$\script{P}=\{ \frac{nP_{\max}}{N_p}, n=0, ..., N_p\}$
% 
At each time, the server decides how much power to allocate to each queue. We denote $P_j(t)$ the power allocated to queue $j$ at time $t$. Then, the instantaneous service rate a queue obtains is given by:
\begin{eqnarray}
\mu_j(t) = \log(1+C_j(t) P_j(t)). 
\end{eqnarray} 
%We consider two different cases. 
%feasible power allocation set. In the first case, 
The feasible power allocation set is given by $\script{P}=\{ 0, 0.75$, $1.5$, $2.25$, $3\}$. 
The objective is to stabilize the system with minimum average power. 
It is important to note that, even though this is a simple setting, it is actually quite representative and models many problems in different contexts, e.g.,  a downlink system in wireless network, workload scheduling in a power system, inventory control system, and traffic light control. Moreover, it can be verified that Assumptions \ref{assumption:bdd-LM}, \ref{assumption:equal} and \ref{assumption:unique} all hold in this example.

We compare three algorithms, Backpressure, $\mathtt{OLAC}$ and $\mathtt{OLAC2}$. We also consider two different channel distributions. In the first distribution, $C_j(t)$ takes each value in $\script{C}$ with probability $0.25$, 
whereas in the second distribution, $C_j(t)$ takes values $0$ and $6$ with probability $0.1$ and takes values $2$ and $4$ with probability $0.4$. This is to mimic the Gaussian distribution. Note that in each case, we have a total of $16$ different channel state combinations. 

Fig. \ref{fig:per-uniform} and \ref{fig:per-unbalanced} show that $\mathtt{OLAC}$ and $\mathtt{OLAC2}$ significantly outperform the Backpressure in terms of delay. For example, in the uniform channel case, we can see from Fig. \ref{fig:per-uniform} that when $V=100$, the average power performance is indistinguishable under the three algorithms. Backpressure results in an average delay of $210$ slots while $\mathtt{OLAC}$ and $\mathtt{OLAC2}$ only incur an average delay about $20$ slots, which is \emph{$10$ times} smaller! Fig.  \ref{fig:per-unbalanced}  shows similar behavior of the algorithms under the unbalanced channel distribution. 
\begin{figure}[cht]
\begin{center}
\vspace{-.06in}
\includegraphics[width=1.6in, height=1.6in]{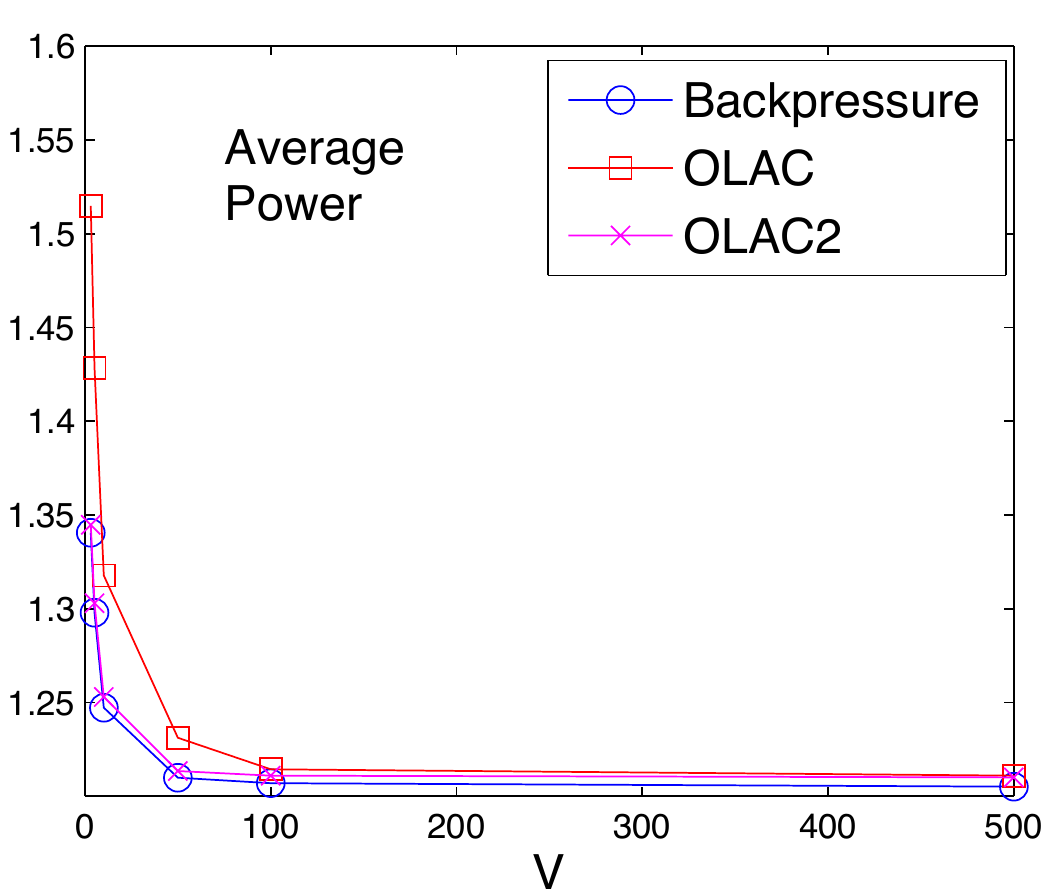}
\includegraphics[width=1.6in, height=1.6in]{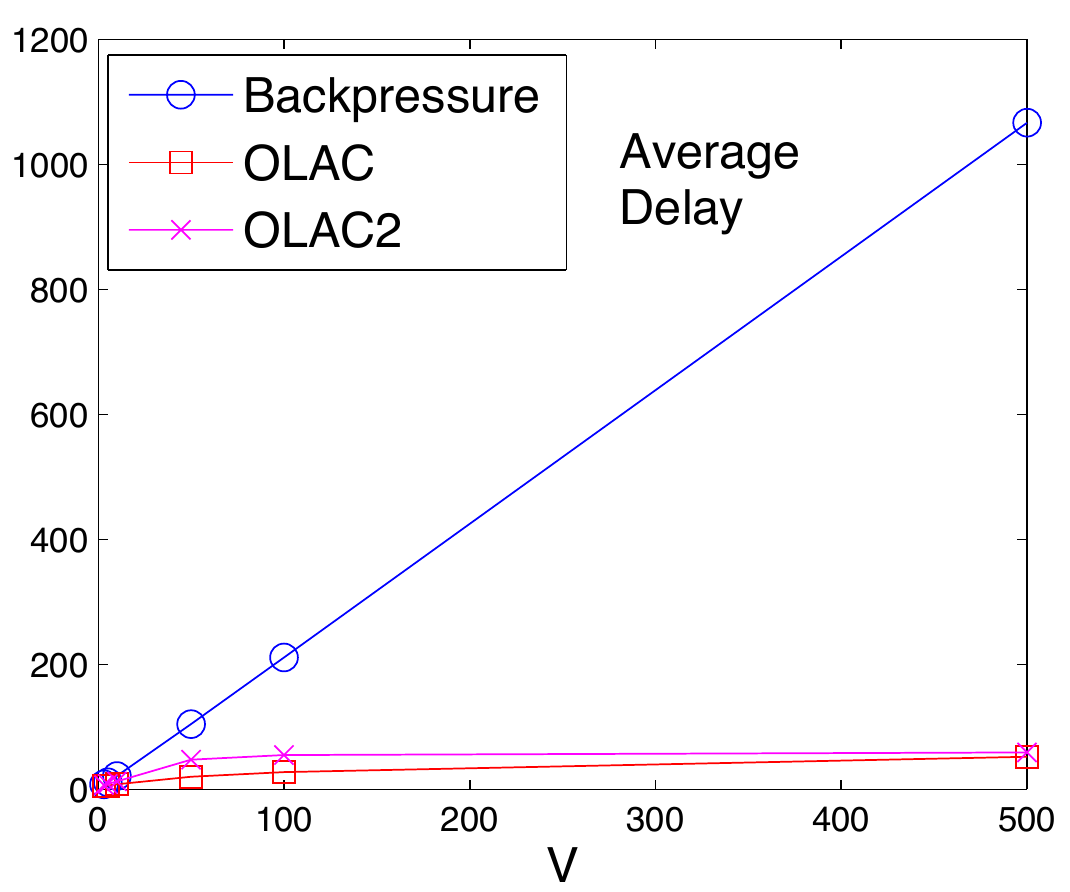}
\vspace{-.1in}
\caption{The power-delay performance of the algorithms under uniform channel distribution $[0.25, 0.25, 0.25, 0.25]$. We see that although all algorithms achieve similar near-minimum average power,  $\mathtt{OLAC}$ and $\mathtt{OLAC2}$ achieve this performance with much smaller delay.} 
\label{fig:per-uniform}
\end{center}
\vspace{-.2in}
\end{figure}

\begin{figure}[cht]
\begin{center}
\vspace{-.06in}
\includegraphics[width=1.6in, height=1.6in]{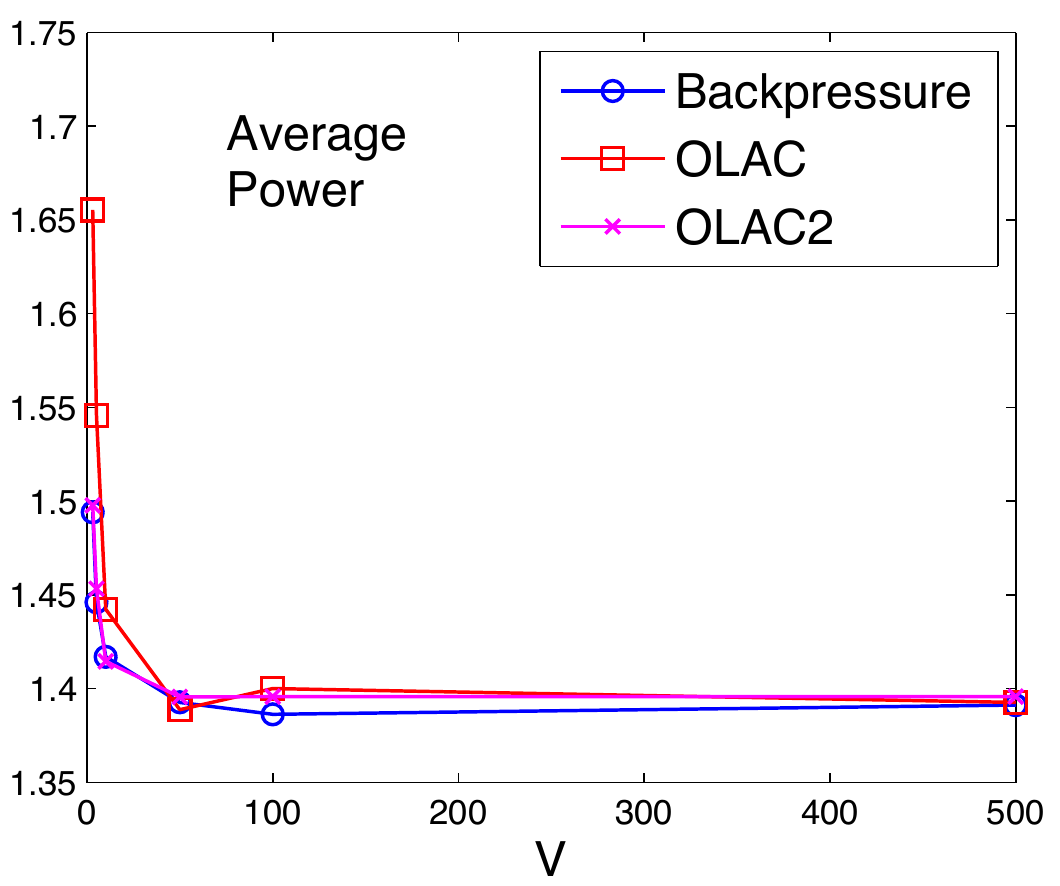}
\includegraphics[width=1.6in, height=1.6in]{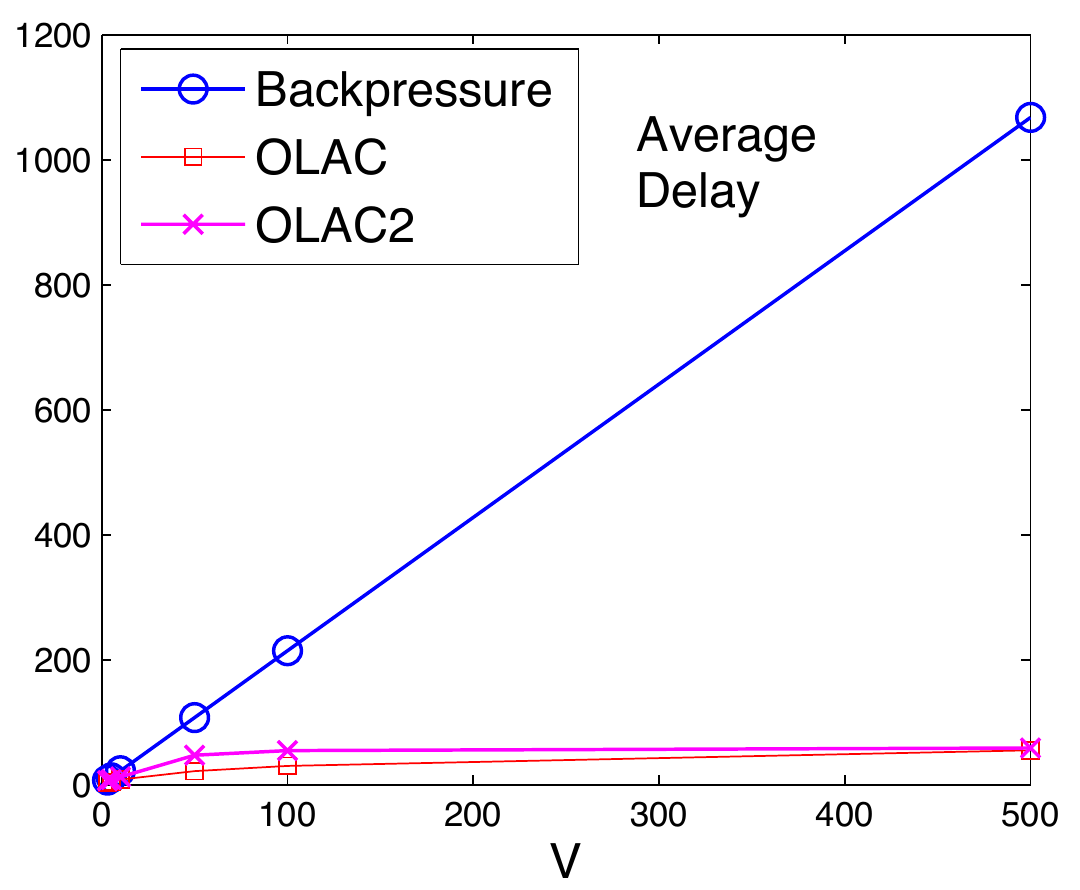}
\vspace{-.1in}
\caption{The power-delay performance of the algorithms under the unbalanced channel distribution $[0.1, 0.4, 0.4, 0.1]$. We see again that $\mathtt{OLAC}$ and $\mathtt{OLAC2}$ significantly outperform Backpressure in delay.} 
\label{fig:per-unbalanced}
\end{center}
\vspace{-.2in}
\end{figure}

Having seen the utility-delay tradeoff of the algorithms, we now look at the convergence rate of the algorithms. Fig. \ref{fig:intuition} shows the convergence behavior of the first queue under the three different algorithms under the uniform channel distribution with $V=500$. As we can see, the virtual queue size under $\mathtt{OLAC}$ and the actual queue size under $\mathtt{OLAC2}$ converge very quickly. Compared to Backpressure, the convergence time is reduced by about $2500$ timeslots. The behavior of the other queue is similar and hence we do not present the results. It is important to notice here that since $\mathtt{OLAC}$ and $\mathtt{OLAC2}$ converge very quickly, the early arrivals into the system will depart from the queue without much waiting time (see the queue process under $\mathtt{OLAC}$), whereas in Backpressure, early arrivals must wait until the algorithm converge to get serve and suffer from a long delay. 
Fig. \ref{fig:convergence-lac2} also shows the ``jump'' behavior of the sizes of the queues under $\mathtt{OLAC2}$ under the uniform channel distribution and $V=500$. %While the learning time is only about $80$ slots, we already see a 
It can be seen that the convergence time is significantly reduced via such dual learning (by about $2500$ timeslots). %of the queue $1$ value to the optimal 
\begin{figure}[cht]
\begin{center}
\vspace{-.06in}
\includegraphics[width=3in, height=1.6in]{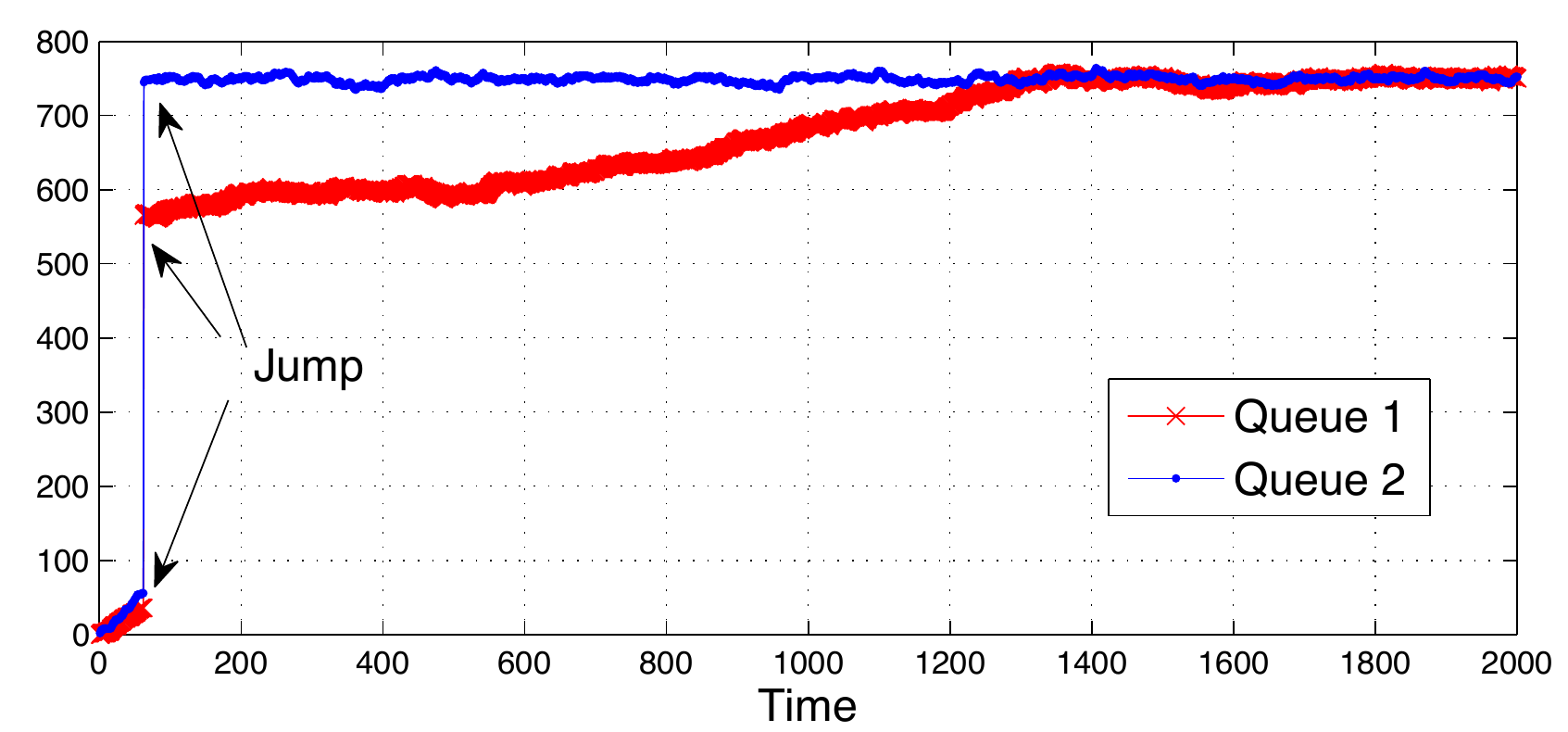}
\vspace{-.1in}
\caption{Convergence of the queues to the optimal Lagrange multiplier under $\mathtt{OLAC2}$ with uniform channel distribution and $V=500$. It can be seen that after only around $80$ slots, dual learning already generates a very good estimation of $\bv{\gamma}^*$. This significantly reduces the time for convergence. } 
\label{fig:convergence-lac2}
\end{center}
\vspace{-.2in}
\end{figure}

%Our simulation results clearly demonstrate the superior performance of the two proposed algorithms $\mathtt{OLAC}$ and $\mathtt{OLAC2}$. They also verify the power of dual learning. % in stochastic network control. 

%We first consider the

% this is the conclusion

\section{Conclusion} \label{section:conclusion}
In this paper, we take a first step to investigate the power of online learning
in stochastic network optimization with unknown system
statistics {\it a priori}. We propose two learning-aided control techniques  $\mathtt{OLAC}$ and $\mathtt{OLAC2}$ for controlling stochastic queueing systems. The design of $\mathtt{OLAC}$ and $\mathtt{OLAC2}$ incorporates statistical learning into system control,  and provides  new ways for designing algorithms for achieving  near-optimal system performance  for general system utility maximization problems. Moreover,  incorporating the learning step significantly improves  the convergence speed of the control algorithms. Such insights are not only proven via novel analytical techniques, but also validated through numerical simulations. Our study demonstrates the promising power of online learning in stochastic network optimization. Much further investigation is desired. In particular, we would like to provide theoretical guarantees on the convergence speed on \olac, and more importantly, understand the fundamental limit of convergence in the online-learning-aided stochastic network optimization. We would like to further investigate practical application scenarios of the proposed schemes and address the corresponding challenges in real world applications.

\section{Acknowledgement}
This work was supported in part by the National Basic Research Program of China Grant 2011CBA00300,2011CBA00301, the National Natural Science Foundation of China Grant 61033001, 61361136003, 041302007, the China youth 1000-talent grant 610502003, and the MSRA-Tsinghua joint lab grant 041902004.
\bibliographystyle{unsrt}
\bibliography{mybib}

\section*{Appendix A -- Proof of Lemma \ref{lemma:beta-conv}}
We prove Lemma \ref{lemma:beta-conv} here.
\begin{proof} (Lemma \ref{lemma:beta-conv})
First we see that $\pi_{s_i}(t)$ converges to $\pi_{s_i}$ as $t$ goes to infinity with probability $1$ \cite{durrett_prob}. Thus, there exists a time $T_{\epsilon_s}<\infty$, such that $\|\bv{\pi}(t) - \bv{\pi} \|\leq \epsilon_{s}$ for all $t\geq T_{\epsilon_s}$ with probability $1$.
This implies that $\bv{\beta}(t)<\infty$ for all $t\geq T_{\epsilon_s}$ \cite{bertsekasoptbook}. % Hence, there exists a constant $M$ which does not depend on $t$, such that $\|\bv{\beta}(t)\|\leq M$ for all $t\geq T_{\epsilon_s}$.
Indeed, under Assumption \ref{assumption:bdd-LM},  we see that there exists an $\eta_0>0$ such that (\ref{eq:slackness}) holds. 

We now construct a \emph{fictitious} system, which is exactly the same as our system, except that we replace the network state distribution $\bv{\pi}$ by $\bv{\pi}(t)$. From  Assumption \ref{assumption:bdd-LM}, we know that for any $t\geq T_{\epsilon_s}$, $w.p.1$,  this system admits an optimal control policy that achieves the optimal cost (with the state distribution being $\bv{\pi}(t)$) and ensures network stability \cite{neelynowbook}. We denote  $f_{\text{av}}^*(\bv{\pi}(t))$ the optimal average cost of the fictitious system subject to stability. We see then $f_{\text{av}}^*(\bv{\pi}(t))\geq0$. 

Now we apply Theorem 1 in \cite{huangneely_qlamarkovian} to get that $g(\bv{\beta}(t), t)=f_{\text{av}}^*(\bv{\pi}(t))$. 
%Since $-f(\cdot, \cdot)$, $A_j(\cdot, \cdot)$ and $\mu_j(\cdot, \cdot)$ are all concave, the slater condition is satisfied and strong duality holds \cite{bertsekasoptbook}.
%
Therefore, for any $t\geq T_{\epsilon_s}$, one can plug the variables that result in  the slack in (\ref{eq:slackness}) into $g(\bv{\beta}, t)$ and obtain:
\begin{eqnarray*}
0&\leq& f_{\text{av}}^*(\bv{\pi}(t))\\
&\stackrel{(a)}{=}& g(\bv{\beta}(t), t) \\
&\stackrel{(b)}{\leq}& \sum_{s_i} \pi_{s_i}(t) [Vf_{\max} - \eta_0\sum_j\beta_{j}(t)].
\end{eqnarray*}
Here (a) follows from Theorem 1 in \cite{huangneely_qlamarkovian}, and (b) follows from the definition of $g(\bv{\beta}, t)$.
This shows that $w.p.1$,
\begin{eqnarray}
\sum_j\beta_{j}(t) \leq \xi\triangleq \frac{Vf_{\max}}{\eta_0}, \,\,\forall\,\, t\geq T_{\epsilon_s}.
\end{eqnarray}
This proves Lemma \ref{lemma:beta-conv}.
\end{proof}

\section*{Appendix B -- Proof of Corollary \ref{corollary:uniform-conv}}
We carry out our proof for Corollary \ref{corollary:uniform-conv} here.
\begin{proof} (Corollary \ref{corollary:uniform-conv})
From lemma \ref{lemma:beta-conv}, we see that $w.p.1$ after some $T_{\epsilon_s}$ time, $\|\bv{\beta}(t)\|\leq \xi$.
This implies that, for all $t\geq T_{\epsilon_s}$ and all $s_i$, we have $w.p.1$ that:
\begin{eqnarray}
g_{s_i}(\bv{\beta}(t))\leq Vf_{\max}+r\xi B, \quad \forall\,s_i.
\end{eqnarray}
Now note that $g(\bv{\beta}(t), t)$ can be expressed as:
\begin{eqnarray}
\hspace{-.3in}&&g(\bv{\beta}(t), t)= \sum_{s_i}[\pi_{s_i} +\delta_{s_i}(t)] g_{s_i}(\bv{\beta}(t)), \label{eq:dual-separate}
\end{eqnarray}
where $\delta_{s_i}(t) = \pi_{s_i}(t) - \pi_{s_i}$ denotes the error of the empirical  distribution.
Therefore,
\begin{eqnarray}
\hspace{-.4in}&&|g(\bv{\beta}(t), t) -  g(\bv{\beta}(t))| \leq \max_{s_i}|\delta_{s_i}(t)| \sum_{s_i}|g_{s_i}(\bv{\beta}(t))| \label{eq:dual-empirical}\\
\hspace{-.4in}&&\qquad\qquad\qquad\qquad \quad\stackrel{(a)}{\leq} \max_{s_i}|\delta_{s_i}(t)| M(Vf_{\max}+r\xi B). \nonumber
\end{eqnarray}
The  inequality (a) uses the fact that $g_{s_i}(\bv{\beta}(t))\leq Vf_{\max}+r\xi B$ for all $t\geq T_{\epsilon_s}$. This implies that with probability $1$, after some finite time $T_{\epsilon_s}$, (\ref{eq:dual-empirical}) holds for all points $\bv{\beta}(t)$  generated  by solving (\ref{eq:primal-approx}).
\end{proof}

\section*{Appendix C -- Proof of Lemma \ref{lemma:conv}}
Here we prove Lemma \ref{lemma:conv}. Since the type of convergence we consider here happens with probability $1$, in the following, we will sometimes  say ``converge'' for convenience when it is not confusing.
\begin{proof} (Lemma \ref{lemma:conv})
Note that for all $t$, the empirical dual function $g(\bv{\beta}, t)$ is always concave \cite{bertsekasoptbook} and hence continuous. %Now we show that $\bv{\beta}(t)$ converges to $\bv{\gamma}^*$ by contradiction.
Suppose $\bv{\beta}(t)$ does not converge to $\bv{\gamma}^*$. Then, there exists a $\delta>0$ such that we can  find an infinite sequence $\{\bv{\beta}(t_k)\}_{k=1}^{\infty}$ with $t_k\rightarrow\infty$ such that $\|\bv{\beta}(t_k) - \bv{\gamma}^*\|\geq \delta$ for all $k$.
Since $g(\bv{\gamma})$ has a unique optimal, this implies that there exists a constant $\epsilon_\delta>0$ such that for all $k$,
\begin{eqnarray}
g(\bv{\gamma}^*)-g(\bv{\beta}(t_k))\geq\epsilon_{\delta}. \label{eq:g-value-ineq}
\end{eqnarray}
To see why (\ref{eq:g-value-ineq}) holds, suppose this is not the case. Then, for any small $\epsilon>0$, we can find some $\bv{\beta}(t_k)$ such that (\ref{eq:g-value-ineq}) is violated. This implies that there is a sequence of $\{g(\bv{\beta}(t_m))\}_{m=1}^{\infty}$ which converges to $g(\bv{\gamma}^*)$. Denote $\script{M} = \{ \bv{\beta}: \|\bv{\beta}\|\leq \xi\}$. We see that $\script{M}$ is compact. Also, by Lemma  \ref{lemma:beta-conv},  we see that there exists a finite $m^*$, such that  $\{\bv{\beta}(t_m)\}_{m=m^*}^{\infty}$ is an infinite sequence in the compact set $\script{M}$ $w.p.1$. Hence, it has a converging subsequence \cite{math-rudin}. Denote the limit point of this subsequence by $\bv{\beta}'$. The above thus means that $g(\bv{\beta}') = g(\bv{\gamma}^*)$.  However, in this case  $\|\bv{\beta}' - \bv{\gamma}^*\|\geq\delta$, which contradicts the fact that $\bv{\gamma}^*$ is the unique optimal of $g(\bv{\gamma})$.
%Note that this also implies that for any subsequence of

Now let us choose a time $T$ such that $w.p.1$, for all $t\geq T$,
\begin{eqnarray}
|g(\bv{\beta}, t) -  g(\bv{\beta})|\leq \epsilon_\delta/3,  \label{eq:g-value-time}
\end{eqnarray}
for all $\bv{\beta}\in\script{M}$.
This is always possible using Corollary \ref{corollary:uniform-conv}  and the fact that $\max_{s_i}|\delta_{s_i}(t)| \rightarrow0$ as $t\rightarrow\infty$. Then, choose a point $\bv{\beta}(t_k)$ from $\{\bv{\beta}(t_k)\}_{k=1}^{\infty}$ with $t_k\geq T$. We see that the optimal solution $\bv{\beta}(t_k)$ at time $t_k$ satisfies:
\begin{eqnarray}
g(\bv{\beta}(t_k), t_k)\geq  g(\bv{\gamma}^*, t_k). \label{eq:g-value-time-1}
\end{eqnarray}
Using (\ref{eq:g-value-time}),  (\ref{eq:g-value-time-1}) implies that:
\begin{eqnarray}
g(\bv{\beta}(t_k)) + \frac{2}{3}\epsilon_\delta\geq g(\bv{\gamma}^*),
\end{eqnarray}
which contradicts (\ref{eq:g-value-ineq}). This shows that $\bv{\beta}(t)$  converge to $\bv{\gamma}^*(t)$ $w.p.1$.
\end{proof}

\section*{Appendix D -- Proof of Lemma \ref{lemma:drift-g-tilde}}
%\textcolor{red}{We need to check that the $\bv{\beta}(t)$ changes every time and the result still holds. }
Our proof is similar to the one in \cite{huangneely_dr_tac}, except that in this case, we prove the convergence result for a shifted target $\bv{\tilde{\gamma}^*}(t)$.
%using the effective backlog $\bv{Q}(t) = \bv{q}(t) +\bv{\beta}(t) - \bv{\theta}$.
% we do not directly work on $\bv{q}(t)$. Instead, we work on the
\begin{proof} (Lemma \ref{lemma:drift-g-tilde})
First, we  define a Lyapunov function:
\begin{eqnarray}
L(t) = \|\bv{q}(t) -\tilde{\bv{\gamma}}^*(t) \|^2.
\end{eqnarray}
Since with probability $1$, $\bv{\beta}(t)$ converges to $\bv{\gamma}^*$, we see that  for any $\epsilon>0$, with probability $1$, there exists a time $T_0<\infty$ such that $|\beta_j(t) - \gamma_{j}^*|\leq \sqrt{\epsilon/r}$ for all $j=1, ..., r$. Therefore, we can choose $\epsilon$ with $ \sqrt{\epsilon/r}\leq \theta_j$ for all $j$, so that $w.p.1$, for all $t\geq T_{0}$, we have $\tilde{\bv{\gamma}}^*(t)\succeq\bv{0}$.
%Hence, $\|\bv{\beta}(t) - \bv{\gamma}^*\|\leq \epsilon$.

Then, from the queueing dynamic equation (\ref{eq:queuedynamic}), we know that $\bv{q}(t+1)$ is obtained by projecting $\bv{q}(t)-\bv{\mu}(t)+\bv{A}(t)$ onto $\mathbb{R}_+^r$. Hence, we have:
\begin{eqnarray}
&&\|\bv{q}(t+1) -\tilde{\bv{\gamma}}^*(t) \|^2 \label{eq:single-state} \\
&\stackrel{(a)}{\leq}& \|  \bv{q}(t)-\bv{\mu}(t)+\bv{A}(t) - \tilde{\bv{\gamma}}^*(t)   \|^2\nonumber\\
&\leq& \|  \bv{q}(t) - \tilde{\bv{\gamma}}^*(t)   \|^2 + \|    \bv{\mu}(t) - \bv{A}(t)\|^2\nonumber\\
&&  -2(\tilde{\bv{\gamma}}^*(t)- \bv{q}(t))^T( \bv{A}(t))-\bv{\mu}(t)). \nonumber
\end{eqnarray}
Here (a) uses the non-expansion property of projection \cite{bertsekasoptbook}.
Since $\bv{\mu}(t)$ and $\bv{A}(t)$ are chosen using $\bv{q}(t)+\bv{\beta}(t)-\bv{\theta}$ as the weights in (\ref{eq:lac-eq}), comparing (\ref{eq:lac-eq}) and (\ref{eq:dual_separable-aug}), and using the augmented problem (\ref{eq:primal-aug}), we obtain:
\begin{eqnarray}
&&(\tilde{\bv{\gamma}}^*(t)- \bv{q}(t))^T( \bv{A}(t))-\bv{\mu}(t))\nonumber\\
&\stackrel{(a)}{\geq}&  \tilde{g}_{S(t)}(\tilde{\bv{\gamma}}^*(t))- \tilde{g}_{S(t)}(\bv{q}(t))\nonumber\\
%(\bv{\gamma}^* - \bv{\beta}(t) + \bv{\theta}- \bv{q}(t))^T(\bv{A}(t))-\bv{\mu}(t)) \nonumber \\
&=&  g_{S(t)}(\bv{\gamma}^*)- g_{S(t)}(\bv{q}(t)+\bv{\beta}(t)-\bv{\theta}).\label{eq:single-state-2} %\\
%&\leq&\rho ||\bv{q}(t)+\bv{\beta}(t)-\bv{\theta} - \bv{\gamma}^*||\nonumber \\
\end{eqnarray}
Here in (a) we have used the fact that $\bv{A}(t)-\bv{\mu}(t)$ is a subgradient of $\tilde{g}_{S(t)}(\bv{\gamma})$ at $\bv{\gamma}=\bv{q}(t)$  \cite{bertsekasoptbook}.
Plugging (\ref{eq:single-state-2}) into (\ref{eq:single-state}) and taking expectations over $S(t)$ conditioning on $\bv{q}(t)$, we get:
\begin{eqnarray}
\hspace{-.3in}&& \expect{\|\bv{q}(t+1) -\tilde{\bv{\gamma}}^*(t) \|^2 \left.|\right. \bv{q}(t)}\label{eq:exp-0} \\
\hspace{-.3in}&&\quad\leq  \|  \bv{q}(t) - \tilde{\bv{\gamma}}^*(t)   \|^2 +B - 2(g(\bv{\gamma}^*)- g(\bv{q}(t)+\bv{\beta}(t)-\bv{\theta})). \nonumber
\end{eqnarray}
The constant $B$ is due to $ \|    \bv{\mu}(t) - \bv{A}(t)\|\leq B$.
Now for a given $\eta>0$, if:
\begin{eqnarray}
&&\eta^2-2\eta \| \bv{q}(t) - \tilde{\bv{\gamma}}^*(t) \| \label{eq:cond}\\
&&\qquad\geq B - 2(g(\bv{\gamma}^*)- g(\bv{q}(t)+\bv{\beta}(t)-\bv{\theta})), \nonumber
\end{eqnarray}
then we can plug this into (\ref{eq:exp-0}) and obtain:
\begin{eqnarray}
&& \expect{\|\bv{q}(t+1) -\tilde{\bv{\gamma}}^*(t) \|^2 \left.|\right. \bv{q}(t)}\label{eq:exp-1} \\
&\leq&  \|  \bv{q}(t) - \tilde{\bv{\gamma}}^*(t)   \|^2  - 2\eta \|  \bv{q}(t) - \tilde{\bv{\gamma}}^*(t)   \| +\eta^2 \nonumber\\
&=& (  \|  \bv{q}(t) - \tilde{\bv{\gamma}}^*(t)   \| - \eta )^2. \nonumber
\end{eqnarray}
This implies that:
\begin{eqnarray}
\hspace{-.4in}&& \expect{\|\bv{q}(t+1) -\tilde{\bv{\gamma}}^*(t) \| \left.|\right. \bv{q}(t)}
\leq    \|  \bv{q}(t) - \tilde{\bv{\gamma}}^*(t)   \| - \eta. \label{eq:drift-in-proof}
\end{eqnarray}
For (\ref{eq:drift-in-proof}) to hold,  we only need (\ref{eq:cond}) to hold. Rearranging the terms in (\ref{eq:cond}) and using the fact that $\tilde{\bv{\gamma}}^*(t)=\bv{\gamma}^*-\bv{\beta}(t)+\bv{\theta}$,  it becomes:
%For this to hold, we need:
\begin{eqnarray*}
&&2(g(\bv{\gamma}^*)- g(\bv{q}(t)+\bv{\beta}(t)-\bv{\theta})) \\
&&\qquad \geq 2\eta \| \bv{q}(t)+\bv{\beta}(t)-\bv{\theta} -  \bv{\gamma}^*\| +B-\eta^2.
\end{eqnarray*}
This holds whenever:
\begin{eqnarray}
&&\rho||\bv{\gamma}^*- \bv{q}(t)- \bv{\beta}(t)+\bv{\theta}|| \label{eq:polyhedral-cond-ineq-0}\\
&&\qquad \geq \eta \| \bv{q}(t)+\bv{\beta}(t)-\bv{\theta} -  \bv{\gamma}^*\| +\frac{B-\eta^2}{2}.  \nonumber
\end{eqnarray}
By choosing $0<\eta<\rho$  and using (\ref{eq:polyhedral-cond-ineq-0}), we see that (\ref{eq:drift-in-proof}) holds whenever:
\begin{eqnarray*}
\|\bv{q}(t)  -\tilde{\bv{\gamma}}^*(t)\| = ||\bv{\gamma}^*- \bv{q}(t)- \bv{\beta}(t)+\bv{\theta}|| \geq D_{p}\triangleq\frac{B-\eta^2}{2(\rho -\eta)}.
\end{eqnarray*}
This proves the lemma.
\end{proof}

\section*{Appendix E -- Proof of Theorem \ref{theorem:q-poly}}
Here we prove Theorem \ref{theorem:q-poly}. Different from the proof in \cite{huangneely_dr_tac}, where $\bv{q}(t)$ is shown to be attracted to a fixed point,  here $\bv{q}(t)$ can be viewed to be chasing a moving target (by Lemma \ref{lemma:drift-g-tilde}).  
Fortunately, we see that $\tilde{\bv{\gamma}}^*(t)\rightarrow\bv{\theta}$ $w.p.1$. Hence, $\bv{q}(t)$ will eventually be attracted to $\bv{\theta}$.
%In this case, we need to consider the dynamics of both $\tilde{\bv{\gamma}}^*(t)$ and $\bv{q}(t)$.
%One thing to notice here is that the value of $\bv{\beta}(t)$ changes every time slot. Hence, using ,
\begin{proof} (Theorem \ref{theorem:q-poly}) First of all, we have the following inequalities:
\begin{eqnarray*}
\|\bv{q}(t) -  \tilde{\bv{\gamma}}^*(t)   \|\leq\|\bv{q}(t) -  \bv{\theta}  \| + \|\bv{\beta}(t) - \bv{\gamma}^*\|, \\
\|\bv{q}(t) -  \tilde{\bv{\gamma}}^*(t)   \|\geq\|\bv{q}(t) -  \bv{\theta}  \| -  \|\bv{\beta}(t) - \bv{\gamma}^*\|.
\end{eqnarray*}
Since with probability $1$, $\bv{\beta}(t)$ converges to $\bv{\gamma}^*$ (Lemma \ref{lemma:conv}), we see that  for any $\epsilon$, with probability $1$, there exists a time $T_{\epsilon}<\infty$ such that $\|\bv{\beta}(t) - \bv{\gamma}^*\|\leq \epsilon$ for all $t\geq T_{\epsilon}$. Using this fact in (\ref{eq:lac_exp_drift}),  %and
%, we obtain that with probability $1$, for all  $t\geq T_{\epsilon}$,
%Plugging these into (\ref{eq:lac_exp_drift}),
we see that $w.p.1$, when $t\geq T_{\epsilon}$,
\begin{eqnarray}
\hspace{-.0in}\expect{\|\bv{q}(t+1)  -\bv{\theta}\|\left|\right.\bv{q}(t)} \leq \|\bv{q}(t) - \bv{\theta}\|-\eta +2\epsilon.  \label{eq:lac_exp_drift-2}
\end{eqnarray}
Choosing $2\epsilon<\eta$ and defining $\eta_1=\eta-2\epsilon>0$, we see that for all time $t\geq T_\epsilon$, one has:
\begin{eqnarray}
\hspace{-.0in}\expect{\|\bv{q}(t+1)  -\bv{\theta}\|\left|\right.\bv{q}(t)} \leq \|\bv{q}(t) - \bv{\theta}\|-\eta_1. \label{eq:lac_exp_drift-3}
\end{eqnarray}
Note that this holds $w.p.1$ whenever $t\geq T_{\epsilon}$ and $\|\bv{q}(t)  -\tilde{\bv{\gamma}}^*(t)\|\geq D_p\triangleq \frac{B-\eta^2}{2(\rho-\eta)}$, which is satisfied whenever $t\geq T_{\epsilon}$  and $\|\bv{q}(t)  -\bv{\theta}\| \geq \tilde{D}_p\triangleq D_p + \epsilon$.

%==filling in all the details==
Having established (\ref{eq:lac_exp_drift-3}), we can now use an argument as in \cite{huangneely_dr_tac} and show that there exist constants $c_p=\Theta(1)$, $K_p=\Theta(1)$ such that $w.p.1$, \footnote{Note that this result holds for any $\bv{q}(0)<\infty$. Therefore, even though the $\Theta(1)$ drift in (\ref{eq:lac_exp_drift-3}) becomes effective only after some time, it does not affect the result. }
\begin{eqnarray}
\script{P}_p(\tilde{D}_p, m)\leq c_pe^{-K_pm},\label{eq:pm_ineq}
\end{eqnarray}
where $\script{P}_p(\tilde{D}_p, m)$ is defined as:
\begin{eqnarray}
\script{P}_p(\tilde{D}_p, m)\triangleq\limsup_{t\rightarrow\infty}\frac{1}{t}\sum_{\tau=0}^{t-1}\prob{\|\bv{q}(\tau)-\bv{\theta}\|>\tilde{D}_p+m}. \label{eq:pm_def}
\end{eqnarray}
This implies that for any $q_j(t)$, the probability that $q_j(t)>\theta_j+\tilde{D}_p+m$ is exponentially decreasing in $K_pm$ with $K_p=\Theta(1)$. Hence, if we define $d_j(t) = \max[q_i(t) - \theta_j, 0]$, it can be shown that $\overline{d}_j(t) = \Theta(1)$. Thus, we have:
\begin{eqnarray}
\overline{q}_{\text{av}}^{\mathtt{OLAC}} = \sum_j\overline{q}_j = \sum_j\theta_j +\Theta(1).
\end{eqnarray}
This completes the proof of Theorem \ref{theorem:q-poly}.
\end{proof}

\section*{Appendix F -- Proof of Theorem \ref{theorem:cost}}
To prove  Theorem \ref{theorem:cost}, we first have the following simple lemma, which will be used in the proof.
\begin{lemma}\label{lemma:avg-rate}
Suppose $g_0(\bv{\gamma})$ is polyhedral.
Let $\overline{\mu}_j^{\mathtt{OLAC}}$ and $\overline{A}_j^{\mathtt{OLAC}}$ be the average service rate and average arrival rate to queue $j$ under $\mathtt{OLAC}$. Then, if $\theta_j>\tilde{D}_p+\delta_{\max}$, we have:
\begin{eqnarray}
\overline{\mu}^{\mathtt{OLAC}}_j - \overline{A}^{\mathtt{OLAC}}_j\leq \delta_{\max}c_pe^{- K_p(\theta_j-\tilde{D}_p-\delta_{\max})}, \,\, w.p.1. \,\,\Diamond
\end{eqnarray}
\end{lemma}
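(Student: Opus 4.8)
The plan is to show that the long-run gap $\overline{\mu}^{\mathtt{OLAC}}_j-\overline{A}^{\mathtt{OLAC}}_j$ equals the average rate of \emph{wasted} (null-packet) service at queue $j$, and that such waste can only occur when $q_j(t)$ is small, an event whose time-average frequency is exponentially rare by the concentration bound (\ref{eq:pm_ineq}) established inside the proof of Theorem \ref{theorem:q-poly}. First I would rewrite the queueing dynamic (\ref{eq:queuedynamic}) as the exact per-slot identity
\begin{eqnarray*}
\mu_j(t)-A_j(t)=q_j(t)-q_j(t+1)+w_j(t),\quad w_j(t)\triangleq[\mu_j(t)-A_j(t)-q_j(t)]^+\geq0,
\end{eqnarray*}
verified by splitting on whether $q_j(t)-\mu_j(t)+A_j(t)\geq0$; the slack term $w_j(t)$ is precisely the amount of service applied to an (almost) empty queue.

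Next I would telescope this identity over $\tau=0,\dots,t-1$, using $\bv{q}(0)=\bv{0}$, to obtain $\sum_{\tau=0}^{t-1}(\mu_j(\tau)-A_j(\tau))=\sum_{\tau=0}^{t-1}w_j(\tau)-q_j(t)$. Taking expectations, dividing by $t$, and sending $t\to\infty$, the boundary term drops out because Theorem \ref{theorem:q-poly} gives $\overline{q}_j<\infty$, hence $\expect{q_j(t)}/t\to0$; this yields $\overline{\mu}^{\mathtt{OLAC}}_j-\overline{A}^{\mathtt{OLAC}}_j=\overline{w}_j$. I would then bound the waste pointwise by $w_j(t)\leq\delta_{\max}\mathbf{1}\{q_j(t)<\delta_{\max}\}$, since $w_j(t)>0$ forces $q_j(t)<\mu_j(t)-A_j(t)\leq\delta_{\max}$ and in that case $w_j(t)\leq\mu_j(t)\leq\delta_{\max}$; taking expectations gives $\expect{w_j(\tau)}\leq\delta_{\max}\prob{q_j(\tau)<\delta_{\max}}$.

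The final step links the scalar event $\{q_j(t)<\delta_{\max}\}$ to the vector deviation appearing in $\script{P}_p$. Since $\theta_j>\tilde{D}_p+\delta_{\max}$, whenever $q_j(t)<\delta_{\max}$ we get $\|\bv{q}(t)-\bv{\theta}\|\geq\theta_j-q_j(t)>\theta_j-\delta_{\max}=\tilde{D}_p+m$ with $m\triangleq\theta_j-\delta_{\max}-\tilde{D}_p>0$. Hence $\prob{q_j(\tau)<\delta_{\max}}\leq\prob{\|\bv{q}(\tau)-\bv{\theta}\|>\tilde{D}_p+m}$, and averaging over $\tau$ together with (\ref{eq:pm_ineq}) gives $\overline{w}_j\leq\delta_{\max}\script{P}_p(\tilde{D}_p,m)\leq\delta_{\max}c_pe^{-K_pm}=\delta_{\max}c_pe^{-K_p(\theta_j-\tilde{D}_p-\delta_{\max})}$, which is the claim; the ``$w.p.1$'' qualifier is inherited from (\ref{eq:pm_ineq}).

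The main obstacle I expect is isolating the idling term $w_j(t)$ and proving it alone accounts for the rate gap --- that is, the telescoping step together with the vanishing of $\expect{q_j(t)}/t$, which is exactly where queue stability (Theorem \ref{theorem:q-poly}) is essential. The only other delicate point is the coordinate-to-vector reduction, which is where the hypothesis $\theta_j>\tilde{D}_p+\delta_{\max}$ is consumed to guarantee a strictly positive slack $m$ in the exponent.
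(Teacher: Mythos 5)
Your proof is correct and follows essentially the same route as the paper: both reduce the rate gap to the time-average frequency of the event $\{q_j(\tau)<\delta_{\max}\}$, bound that frequency via the exponential attraction estimate (\ref{eq:pm_ineq}) applied with $m=\theta_j-\tilde{D}_p-\delta_{\max}$, and multiply by the per-slot service cap $\delta_{\max}$. The only difference is that you make explicit, via the idling decomposition $w_j(t)=[\mu_j(t)-A_j(t)-q_j(t)]^+$ and the telescoping/rate-stability step, the accounting that the paper compresses into the single remark that ``the lemma then follows since every queue $j$ can only serve $\delta_{\max}$ packets in a timeslot.''
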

\begin{proof} (Lemma \ref{lemma:avg-rate})
From (\ref{eq:pm_ineq}), we know that $w.p.1.$, if $\theta_j>\tilde{D}_p+\delta_{\max}$, then: \begin{eqnarray}
\limsup_{t\rightarrow\infty}\frac{1}{t}\sum_{\tau=0}^{t-1}\prob{q_j(\tau)<\delta_{\max}}\leq c_pe^{- K_p(\theta_j-\tilde{D}_p-\delta_{\max})}. \label{eq:pm_def_j}
\end{eqnarray}
This shows that the fraction of  time that $q_j(t)$ is smaller than $\delta_{\max}$ is at most $c_pe^{-K_p(\theta_j-\tilde{D}_p-\delta_{\max})}$. The lemma then follows since every  queue  $j$ can only serve $\delta_{\max}$ packets in a timeslot.
\end{proof}

We are now ready to prove Theorem \ref{theorem:cost}.
\begin{proof} (Theorem \ref{theorem:cost})
We define a Lyapunov function $L(t)=\frac{1}{2}\sum_jq^2_j(t)$ and the one-slot conditional drift $\Delta(t)\triangleq \expect{L(t+1) - L(t) \left.|\right. \bv{q}(t)}$. Using the queueing dynamic equations (\ref{eq:queuedynamic}), we have:
\begin{eqnarray}
\Delta(t) \leq B - \sum_{j}q_j(t) \expect{\mu_j(t) - A_j(t)\left.|\right. \bv{q}(t)}.
\end{eqnarray}
By adding to both sides the term $V\expect{f(t)\left.|\right. \bv{q}(t)} - \sum_{j}\expect{(\beta_j(t)-\theta_j) [\mu_j(t) - A_j(t)]\left.|\right. \bv{q}(t)}$, we obtain:
\begin{eqnarray}
\hspace{-.3in}&& \Delta(t) + V\expect{f(t)\left.|\right. \bv{q}(t)} \label{eq:drift-aug}\\
\hspace{-.3in}&& \qquad\,- \sum_{j}\expect{(\beta_j(t)-\theta_j) [\mu_j(t) - A_j(t)]\left.|\right. \bv{q}(t)} \nonumber\\
\hspace{-.3in}&&\qquad \leq B + V\expect{f(t)\left.|\right. \bv{q}(t)} \nonumber\\
\hspace{-.3in}&&\qquad\quad - \sum_{j}\expect{(q_j(t) +\beta_j(t) -\theta_j) [\mu_j(t) - A_j(t)]\left.|\right. \bv{q}(t)}. \nonumber
\end{eqnarray}
Using Assumption \ref{assumption:equal} and plugging in (\ref{eq:drift-aug}) the optimal stationary and randomized policy $\{x^{(s_i)*}_k, \vartheta^{(s_i)*}_k\}_{s_i, k}$, we  obtain:
 \begin{eqnarray}
&& \Delta(t) + V\expect{f^{\mathtt{OLAC}}(t)\left.|\right. \bv{q}(t)} \label{eq:drift-aug-1}\\
&& \qquad- \sum_{j} \expect{(\beta_j(t)-\theta_j) [\mu^{\mathtt{OLAC}}_j(t) - A^{\mathtt{OLAC}}_j(t)]\left.|\right. \bv{q}(t)} \nonumber\\
&&\qquad \leq B + Vf_{\text{av}}^*. \nonumber
\end{eqnarray}
Here $f^{\mathtt{OLAC}}(t)$, $\mu^{\mathtt{OLAC}}_j(t)$ and $A^{\mathtt{OLAC}}_j(t)$ denote the cost, service rate and arrival rate under the $\mathtt{OLAC}$ algorithm.
Rearranging the terms, we have:
 \begin{eqnarray}
&& \Delta(t) + V\expect{f^{\mathtt{OLAC}}(t)\left.|\right. \bv{q}(t)}  \leq B + Vf_{\text{av}}^*\label{eq:drift-aug-2}\\
&&\qquad\, + \sum_{j}\expect{(\beta_j(t)-\theta_j) [\mu^{\mathtt{OLAC}}_j(t) - A^{\mathtt{OLAC}}_j(t) ]\left.|\right. \bv{q}(t)}. \nonumber
\end{eqnarray}
Taking expectations on both sides of  (\ref{eq:drift-aug-2}) over $\bv{q}(t)$,
 taking a telescoping sum over $t=0, ..., T-1$, rearranging the terms, dividing both sides by $TV$, and taking a $\limsup$ as $T\rightarrow\infty$, we have:
 \begin{eqnarray}
\hspace{-.3in}&&\limsup_{T\rightarrow\infty}\frac{1}{T} \sum_{t=0}^{T-1}\expect{f^{\mathtt{OLAC}}(t)}  \leq \frac{B}{V} + f_{\text{av}}^*\label{eq:drift-aug-3}\\
\hspace{-.3in}&&\quad +\limsup_{T\rightarrow\infty} \frac{1}{T} \sum_{t=0}^{T-1}\expect{\sum_{j}\frac{(\beta_j(t)-\theta_j)}{V} [\mu^{\mathtt{OLAC}}_j(t) - A^{\mathtt{OLAC}}_j(t)]}. \nonumber
\end{eqnarray}
It remains to show that the last term is $O(1/V)$. From Lemma \ref{lemma:conv} we know that $\beta_j(t)-\theta_j$ converges to $\gamma_{j}^*-\theta_j=\Theta(V)$ with probability $1$.  Thus, $w.p.1$, there exists a finite time $T_{\epsilon}$, such that for all $t\geq T_{\epsilon}$,
\begin{eqnarray}
|(\beta_j(t)-\theta_j)  - (\gamma_{j}^*-\theta_j) |\leq \epsilon, \,\,\forall\, j. \end{eqnarray}
Hence, we have:
\begin{eqnarray}
\hspace{-.3in}&&\limsup_{T\rightarrow\infty}\frac{1}{T} \sum_{t=0}^{T-1}\expect{\frac{(\beta_j(t)-\theta_j)}{V}  [\mu^{\mathtt{OLAC}}_j(t) - A^{\mathtt{OLAC}}_j(t)]} \nonumber\\
\hspace{-.3in}&&\stackrel{(a)}{\leq}  (\frac{\gamma_{j}^*-\theta_j  +\epsilon}{V})\limsup_{T\rightarrow\infty}\frac{1}{T}\sum_{t=0}^{T-1}  \expect{\mu^{\mathtt{OLAC}}_j(t)} \nonumber\\
\hspace{-.3in}&&\qquad -   (\frac{\gamma_{j}^*-\theta_j  -\epsilon }{V})\limsup_{T\rightarrow\infty}\frac{1}{T}\sum_{t=0}^{T-1}  \expect{A^{\mathtt{OLAC}}_j(t)} \nonumber\\
\hspace{-.3in}&& \leq \frac{\gamma_{j}^*-\theta_j}{V}(\overline{\mu}^{\mathtt{OLAC}}_j - \overline{A}^{\mathtt{OLAC}}_j) +\frac{2\epsilon\delta_{\max}}{V}. \label{eq:last-term}
\end{eqnarray}
Here (a) follows from the fact that $T_{\epsilon}<\infty$.
Using Lemma \ref{lemma:avg-rate} with $\theta_j=[\log(V)]^2$ and a sufficiently large $V$ such that $K_p([\log(V)]^2 - \tilde{D}_p - \delta_{\max})\geq2 \log(V)$, we have:
\begin{eqnarray}
\overline{\mu}^{\mathtt{OLAC}}_j - \overline{A}^{\mathtt{OLAC}}_j\leq \frac{\delta_{\max} c_pe^{K_p(\tilde{D}_p+\delta_{\max}})}{ e^{K_p[\log(V)]^2}} = O(\frac{1}{V^2}). \label{eq:last-term-2}
\end{eqnarray}
We first consider when $\gamma_{0j}^*>0$. In this case, $\gamma_j^* = V\gamma_{0j}^*=\Theta(V)$. Thus, (\ref{eq:last-term-2}) implies that:
 \begin{eqnarray}
 (\gamma_{j}^*-\theta_j)(\overline{\mu}^{\mathtt{OLAC}}_j - \overline{A}^{\mathtt{OLAC}}_j)  = O(\frac{1}{V}). \label{eq:last-term-3}
\end{eqnarray}
Using (\ref{eq:last-term}) and  (\ref{eq:last-term-2}), we conclude that, with probability $1$,
 \begin{eqnarray}
\hspace{-.5in}&&\limsup_{T\rightarrow\infty}\frac{1}{T} \sum_{t=0}^{T-1}\expect{f^{\mathtt{OLAC}}(t)}  \leq  f_{\text{av}}^*+\frac{B}{V} +O(\frac{1}{V}). \label{eq:drift-aug-4}
\end{eqnarray}
In the case when $\gamma_j^*=0$, we see that (\ref{eq:last-term-3}) still holds. Hence, (\ref{eq:drift-aug-4}) again follows from (\ref{eq:last-term-3}).
% follows using a similar argument as above.
This completes the proof of Theorem \ref{theorem:cost}.
\end{proof}

\section*{Appendix G -- Proof of Theorem \ref{theorem:convergence-bp}}
To prove the theorem, we make use of the following technical lemma from \cite{bertsekasoptbook}.
\begin{lemma}\label{lemma:exp-time}
Let $\script{F}_n$ be filtration, i.e., a sequence of increasing $\sigma$-algebras with $\script{F}_n\subset\script{F}_{n+1}$. Suppose the sequence of random variables $\{y_n\}_{n\geq0}$ satisfy:
\begin{eqnarray}
\expect{||y_{n+1}-y^*||\left.|\right. \script{F}_n} \leq \expect{||y_{n}-y^*|| \left.|\right. \script{F}_n} -u_n, \label{eq:exp-dec}
\end{eqnarray}
where $u_n$ takes the following values:
\begin{eqnarray}
u_n=\left\{\begin{array}{ll} u & \textrm{if  $||y_{n}-y^*||\geq D$}, \\0 &\textrm{else}.\end{array}\right.\label{eq:eta-n}
\end{eqnarray}
Here $u>0$ is a given constant.
 Then, by defining $N_{D}\triangleq\inf\{k \left.|\right. \|y_{n}-y^*\| \leq D\}$, we have:
\begin{eqnarray}
\expect{N_D} \leq ||y_{0}-y^*|| /u. \,\, \Diamond
\end{eqnarray}
\end{lemma}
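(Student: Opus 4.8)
The plan is to read Lemma \ref{lemma:exp-time} as a standard first-passage estimate for a process with a uniform negative drift, and to prove it by a stopped Lyapunov (compensation) argument. First I would observe that $\{y_n\}$ is adapted to $\{\script{F}_n\}$, so $\expect{\|y_n-y^*\|\mid\script{F}_n}=\|y_n-y^*\|$ and the hypothesis (\ref{eq:exp-dec}) simplifies to $\expect{\|y_{n+1}-y^*\|\mid\script{F}_n}\leq\|y_n-y^*\|-u_n$. The only feature of $u_n$ I will use is the following: since $N_D=\inf\{k : \|y_k-y^*\|\leq D\}$, on the event $\{n<N_D\}$ we have $\|y_n-y^*\|>D$, hence $u_n=u$ there by (\ref{eq:eta-n}).

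The core step is to introduce the stopped and drift-compensated process
\[
M_n \triangleq \|y_{n\wedge N_D}-y^*\| + u\,(n\wedge N_D),
\]
and to show that it is a nonnegative supermartingale with respect to $\{\script{F}_n\}$. The verification splits into two cases. On $\{N_D\leq n\}$ the process is frozen, so $M_{n+1}=M_n$ is $\script{F}_n$-measurable and the supermartingale inequality holds with equality. On $\{N_D>n\}$ one has $n\wedge N_D=n$ and $(n+1)\wedge N_D=n+1$, so $M_{n+1}=\|y_{n+1}-y^*\|+u(n+1)$; taking $\expect{\cdot\mid\script{F}_n}$ and using $u_n=u$ on this event gives $\expect{M_{n+1}\mid\script{F}_n}\leq(\|y_n-y^*\|-u)+u(n+1)=\|y_n-y^*\|+un=M_n$. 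The compensation term $u\,(n\wedge N_D)$ is chosen precisely so that the $+u$ it adds each step cancels the $-u$ drift, leaving $\expect{M_{n+1}\mid\script{F}_n}\leq M_n$ in both cases.

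Finally I would conclude by iterating the supermartingale inequality and letting $n\to\infty$. Since $M_n\geq u\,(n\wedge N_D)\geq 0$, we obtain $u\,\expect{n\wedge N_D}\leq\expect{M_n}\leq\expect{M_0}=\|y_0-y^*\|$, so $\expect{n\wedge N_D}\leq\|y_0-y^*\|/u$ for every $n$. Because $n\wedge N_D\uparrow N_D$ as $n\to\infty$, the monotone convergence theorem yields $\expect{N_D}\leq\|y_0-y^*\|/u$, which is the claim.

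I expect the only real subtlety to be bookkeeping rather than mathematics: checking the supermartingale inequality cleanly at the stopping boundary (the sub-case $N_D=n+1$ inside $\{N_D>n\}$, where the frozen and unfrozen expressions for $M_{n+1}$ must agree), and justifying the final passage $n\to\infty$, which is legitimate precisely because $n\wedge N_D$ is nondecreasing and bounded below by $0$. A minor point worth stating explicitly is the adaptedness of $\{y_n\}$, which is implicit in the form of the hypothesis and is exactly what lets me drop the conditioning on the right-hand side of (\ref{eq:exp-dec}).
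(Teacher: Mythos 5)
Your proof is correct. Note, however, that the paper does not actually prove Lemma \ref{lemma:exp-time} at all: its ``proof'' is a bare citation to a reference, so there is nothing in the paper to compare against step by step. Your stopped, drift-compensated supermartingale $M_n=\|y_{n\wedge N_D}-y^*\|+u\,(n\wedge N_D)$ is the standard way to convert a uniform negative drift outside a ball into a first-passage bound, and the argument is complete: adaptedness collapses the right-hand side of (\ref{eq:exp-dec}) to $\|y_n-y^*\|-u_n$; the event $\{N_D>n\}$ is $\script{F}_n$-measurable, so the case split is legitimate; on that event $(n+1)\wedge N_D=n+1$ whether or not $N_D=n+1$, so the boundary sub-case you flag causes no trouble; nonnegativity of $M_{n+1}$ lets you pass from the conditional inequality to $\expect{M_{n+1}}\leq\expect{M_n}$ without any separate integrability hypothesis; and monotone convergence finishes the job (and, as a byproduct, shows $N_D<\infty$ almost surely). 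The only stylistic remark is that you could equally phrase the middle step as optional stopping applied to the supermartingale $\|y_n-y^*\|+\sum_{k<n}u_k$, but that is the same computation. Your write-up supplies a self-contained proof that the paper delegates to an external source, which is a strict improvement in completeness.
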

\begin{proof}
See \cite{bertsekasoptbook}.
\end{proof}

\begin{proof} (Theorem \ref{theorem:convergence-bp})
From Lemma \ref{lemma:drift-g-tilde}, we see that (\ref{eq:lac_exp_drift})  holds for all $t$ under Backpressure with $\eta<\rho$ and $D_p=\frac{B-\eta^2}{2(\rho -\eta)}$. Hence, using Lemma \ref{lemma:exp-time}, we have:
\begin{eqnarray*}
\expect{T_{D_p}}\leq ||\bv{q}(0)-\bv{\gamma}^*||/\eta.
\end{eqnarray*}
On the other hand, since $\|\bv{q}(t+1) -\bv{q}(t)\|\leq B$, i.e., in every time the queue vector can change by at most $B$ distance, we must have
$\expect{T_{D_p}}\geq (||\bv{q}(0)-\bv{\gamma}^*|| -D_p)^+ /B$.
\end{proof}

\section*{Appendix H -- Proof of Theorem \ref{theorem:lac2-convergence}}
We prove Theorem \ref{theorem:lac2-convergence} in this section. We first have the following lemma regarding the distance between $\bv{\beta}(t)$ and the distribution estimation accuracy.
\begin{lemma}\label{lemma:beta-rate}
With probability $1$, there exists an $\Theta(1)$ time $T_{\epsilon_s}$ (defined in Lemma \ref{lemma:beta-conv}) such that, for all $t\geq T_{\epsilon_s}$,
\begin{eqnarray}
\| \bv{\beta}(t) - \bv{\gamma}^*\|\leq 2 \max_{s_i}|\delta_{s_i}(t)|M (Vf_{\max}+r\xi B)/\rho. \label{eq:beta-rate}
\end{eqnarray}
\end{lemma}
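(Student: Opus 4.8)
The plan is to combine the polyhedral lower bound on the dual function (\ref{eq:polyhedral-v}) with the uniform convergence estimate of Corollary \ref{corollary:uniform-conv}, exploiting the optimality of $\bv{\beta}(t)$ for the empirical dual problem (\ref{eq:primal-approx}). The target inequality is really a ``stability of the maximizer'' statement: since $g(\bv{\gamma})$ is polyhedral, a small error in the dual \emph{value} forces a proportionally small error in the dual \emph{argmax}, and the value error is controlled by how well $\bv{\pi}(t)$ approximates $\bv{\pi}$.

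First I would establish that both $\bv{\beta}(t)$ and $\bv{\gamma}^*$ lie in the compact set $\script{M}=\{\bv{\gamma}:\|\bv{\gamma}\|\leq\xi\}$. For $\bv{\beta}(t)$ this is exactly Lemma \ref{lemma:beta-conv}, valid for all $t\geq T_{\epsilon_s}$ $w.p.1$. For $\bv{\gamma}^*$, the identical slackness/Theorem-1 argument of Appendix A applied to the true distribution $\bv{\pi}$ (in place of $\bv{\pi}(t)$) gives $\sum_j\gamma_j^*\leq\xi$, hence $\|\bv{\gamma}^*\|\leq\xi$ because $\bv{\gamma}^*\succeq\bv{0}$. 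Consequently the bound $g_{s_i}(\cdot)\leq Vf_{\max}+r\xi B$ underlying Appendix B holds at both points, so the Corollary \ref{corollary:uniform-conv} estimate $|g(\bv{\gamma},t)-g(\bv{\gamma})|\leq\max_{s_i}|\delta_{s_i}(t)|M(Vf_{\max}+r\xi B)$ is available simultaneously at $\bv{\gamma}=\bv{\beta}(t)$ and at $\bv{\gamma}=\bv{\gamma}^*$.

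Next I would apply the polyhedral condition (\ref{eq:polyhedral-v}) at $\bv{\gamma}=\bv{\beta}(t)$, giving
\begin{eqnarray}
\rho\|\bv{\gamma}^*-\bv{\beta}(t)\|\leq g(\bv{\gamma}^*)-g(\bv{\beta}(t)), \nonumber
\end{eqnarray}
and then bound the right-hand side by telescoping through the empirical dual value:
\begin{eqnarray}
g(\bv{\gamma}^*)-g(\bv{\beta}(t)) &=& [g(\bv{\gamma}^*)-g(\bv{\gamma}^*,t)] + [g(\bv{\gamma}^*,t)-g(\bv{\beta}(t),t)] \nonumber\\
&&{}+[g(\bv{\beta}(t),t)-g(\bv{\beta}(t))]. \nonumber
\end{eqnarray}
The middle bracket is nonpositive, since $\bv{\beta}(t)$ maximizes $g(\cdot,t)$ over $\bv{\gamma}\succeq\bv{0}$ and hence $g(\bv{\gamma}^*,t)\leq g(\bv{\beta}(t),t)$; the first and third brackets are each bounded in absolute value by $\max_{s_i}|\delta_{s_i}(t)|M(Vf_{\max}+r\xi B)$ by the two instances of Corollary \ref{corollary:uniform-conv}. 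Summing and dividing by $\rho$ yields (\ref{eq:beta-rate}) exactly.

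The main obstacle I anticipate is purely a domain-of-validity issue: Corollary \ref{corollary:uniform-conv} is stated for the learned iterate $\bv{\beta}(t)$, whereas the telescoping argument requires the same estimate at the fixed point $\bv{\gamma}^*$. This is resolved by the a priori norm bound $\|\bv{\gamma}^*\|\leq\xi$ obtained by re-running the Appendix A argument on the true distribution, after which $\bv{\gamma}^*$ enters the same compact set $\script{M}$ on which the uniform estimate was proved. Once that is in place, the remainder is a clean one-line combination of the polyhedral inequality, the empirical-optimality of $\bv{\beta}(t)$, and the two uniform-convergence bounds.
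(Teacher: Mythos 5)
Your proof is correct and follows essentially the same route as the paper's Appendix I: the paper bounds $g(\bv{\gamma}^*)-g(\bv{\beta}(t))$ by $2\max_{s_i}|\delta_{s_i}(t)|M(Vf_{\max}+r\xi B)$ via a contradiction argument that is exactly the contrapositive of your three-term telescoping through $g(\cdot,t)$, and then divides by $\rho$ using the polyhedral inequality. Your explicit verification that $\|\bv{\gamma}^*\|\leq\xi$ (so that the Corollary~\ref{corollary:uniform-conv} estimate applies at $\bv{\gamma}^*$ as well as at $\bv{\beta}(t)$) is a point the paper uses implicitly without stating, so your write-up is if anything slightly more careful.
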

\begin{proof}
See Appendix I.
\end{proof}
 Lemma \ref{lemma:beta-rate} shows that under the polyhedral condition, the distance between the current estimate of the Lagrange multiplier value and the true value diminishes as the empirical distribution converges. This is an important result, as it allows us to focus mainly on the convergence of the distribution when studying the algorithm's convergence.
To show our results, we also  make use of the following theorem regarding distribution convergence \cite{chung_concentration}.
\begin{theorem}\label{theorem:concentration}
Let $X_1$, ..., $X_n$ be independent random variables with $\prob{X_i=1}=p_i$, and $\prob{X_i=0}=1-p_i$. Consider $X=\sum_{i=1}^nX_i$ with expectation $\expect{X}=\sum_{i=1}^np_i$. Then, we have:
\begin{eqnarray}
\prob{X\leq \expect{X}-m} &\leq& e^{\frac{-m^2}{2\expectm{X}}}, \label{eq:low-tail}\\
\prob{X\geq \expect{X}+m} &\leq& e^{\frac{-m^2}{2(\expectm{X}  +m/3 )}}. \quad\Diamond\label{eq:high-tail}
\end{eqnarray}
\end{theorem}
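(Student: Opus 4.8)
The plan is to establish both tail inequalities by the classical Chernoff (exponential-moment) method and then to convert the resulting tight multiplicative bounds into the additive closed forms stated in (\ref{eq:low-tail}) and (\ref{eq:high-tail}). Throughout I would write $\mu=\expectm{X}=\sum_{i=1}^n p_i$ and, for a given deviation $m>0$, set $\delta=m/\mu$, so that $\mu+m=(1+\delta)\mu$ and $\mu-m=(1-\delta)\mu$. The whole argument rests on two ingredients: the factorization of the moment generating function across the independent $X_i$, and a pair of elementary logarithmic inequalities at the end.

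For the upper tail I would fix $\lambda>0$ and apply Markov's inequality to $e^{\lambda X}$, giving
\begin{eqnarray}
\prob{X\geq\mu+m}\leq e^{-\lambda(\mu+m)}\,\expectm{e^{\lambda X}}.\nonumber
\end{eqnarray}
By independence, $\expectm{e^{\lambda X}}=\prod_{i=1}^n\expectm{e^{\lambda X_i}}=\prod_{i=1}^n\big(1+p_i(e^{\lambda}-1)\big)$, and applying $1+x\leq e^{x}$ to each factor yields $\expectm{e^{\lambda X}}\leq e^{\mu(e^{\lambda}-1)}$. Minimizing the exponent $-\lambda(\mu+m)+\mu(e^{\lambda}-1)$ over $\lambda$ gives the optimizer $\lambda=\ln(1+\delta)$ and the tight multiplicative bound $\big(e^{\delta}/(1+\delta)^{1+\delta}\big)^{\mu}$. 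The lower tail is handled symmetrically: I would instead bound $e^{-\lambda X}$ to obtain $\prob{X\leq\mu-m}\leq e^{\lambda(\mu-m)}e^{\mu(e^{-\lambda}-1)}$, and optimize to $\lambda=-\ln(1-\delta)$, producing $\big(e^{-\delta}/(1-\delta)^{1-\delta}\big)^{\mu}$.

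The remaining step, and the only delicate one, is to pass from these two multiplicative forms to the stated exponents. For the lower tail I would use the elementary inequality $(1-\delta)\ln(1-\delta)\geq-\delta+\delta^2/2$, valid for $0\leq\delta<1$ and immediate from the Taylor expansion of $\ln(1-\delta)$; this gives $e^{-\delta}/(1-\delta)^{1-\delta}\leq e^{-\delta^2/2}$, and raising to the power $\mu$ while using $\mu\delta^2/2=m^2/(2\mu)$ recovers (\ref{eq:low-tail}). The hard part will be the upper-tail analogue, namely showing $e^{\delta}/(1+\delta)^{1+\delta}\leq\exp\!\big(-\delta^2/(2+2\delta/3)\big)$, equivalently $(1+\delta)\ln(1+\delta)-\delta\geq\delta^2/(2+2\delta/3)$ for all $\delta\geq0$. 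I would prove this calculus lemma by setting $h(\delta)$ equal to the difference of the two sides, checking $h(0)=h'(0)=0$, and then verifying nonnegativity of the second derivative on $[0,\infty)$ (examining successive derivatives at $\delta=0$ if needed) so that $h$ stays nonnegative; the factor $2+2\delta/3$ in the denominator is precisely what keeps the bound valid for large $\delta$, which is why this direction is more subtle than the lower tail. Raising the resulting bound to the power $\mu$ and using the identity $\mu\delta^2/(2+2\delta/3)=m^2/\big(2(\mu+m/3)\big)$ then yields (\ref{eq:high-tail}), completing the proof.
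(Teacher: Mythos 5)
Your proposal is correct, but note that the paper itself offers no proof of this theorem at all: it is quoted verbatim from the cited concentration survey (\cite{chung_concentration}) and used as a black box in the proof of Theorem \ref{theorem:lac2-convergence}. What you have written is essentially the canonical derivation found in that reference: the exponential-moment (Chernoff) method with the MGF factorization $\prod_i\big(1+p_i(e^{\lambda}-1)\big)\leq e^{\mu(e^{\lambda}-1)}$, the optimizers $\lambda=\ln(1+\delta)$ and $\lambda=-\ln(1-\delta)$, and the two logarithmic comparison lemmas. Both of your calculus steps check out: the lower-tail inequality follows from the series identity $(1-\delta)\ln(1-\delta)=-\delta+\sum_{k\geq2}\frac{\delta^k}{k(k-1)}$, and for the upper tail your second-derivative plan succeeds cleanly, since with $h(\delta)=(1+\delta)\ln(1+\delta)-\delta-\frac{3\delta^2}{2(3+\delta)}$ one computes $h''(\delta)=\frac{1}{1+\delta}-\frac{27}{(3+\delta)^3}\geq0$ (because $(3+\delta)^3\geq27(1+\delta)$ for $\delta\geq0$), with $h(0)=h'(0)=0$. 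Two degenerate cases deserve one line each in a polished write-up: if $\mu=0$ then $X=0$ almost surely and both bounds are vacuous or trivial, and in the lower tail if $m\geq\mu$ (i.e., $\delta\geq1$, where your substitution $\lambda=-\ln(1-\delta)$ is undefined) the left-hand probability is at most $\prob{X\leq0}$ and the bound holds by a direct or limiting argument since $X\geq0$. With those remarks added, your argument is a complete, self-contained proof of exactly the statement the paper imports.
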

We now state the proof of the convergence time for $\mathtt{OLAC2}$.

\begin{proof} (Theorem \ref{theorem:lac2-convergence}) Choosing $n=T_l=V^{c}$ and $m=4V^{c/2}\log(V)$ in Theorem \ref{theorem:concentration}, and let $X_t$ be the indicator of state $s_i$ at time $t$ for $0\leq t\leq n-1$. Then, we have $\expect{X}=V^c\pi_{s_i}$. Thus, using the fact that $\pi_{s_i}\leq1$, we see that at time $t=T_l$,
\begin{eqnarray*}
 \prob{\pi_{s_i}(t)\leq \pi_{s_i} - \frac{4\log(V)}{V^{c/2}}  } &\leq& e^{-8[\log(V)]^2}, \label{eq:low-tail}\\
 \prob{\pi_{s_i}(t)\geq  \pi_{s_i} + \frac{4\log(V)}{V^{c/2}} } &\leq& e^{-\frac{8[\log(V)]^2}{ 1 + \frac{2}{3}\log(V) V^{-c/2} }}. %\Diamond%\label{eq:high-tail}
\end{eqnarray*}
Using  the union bound,  we see that when $V$ is large enough such that $\frac{2}{3}\log(V)V^{-c/2}\leq1$,
\begin{eqnarray}
\prob{\max_{s_i}|\pi_{s_i}(t) - \pi_{s_i}| \geq \frac{4\log(V)}{V^{c/2}} } \leq Me^{-4[\log(V)]^2}.
\end{eqnarray}
Therefore, using Lemma \ref{lemma:beta-rate}, we see that with probability at least $1- \frac{M}{V^{4\log(V)}}$, at time $t=T_l$,
%Corollary \ref{corollary:concentration} says that after $V^c$ time, we can ensure that the empirical distribution is fairly accurate with high probability, i.e., within $O(\frac{\log(V)}{V^{c/2}})$ of the actual value. Then, we also conclude that with , we have at time $t=V^c$ that:
\begin{eqnarray}
\| \tilde{\bv{\beta}} - \bv{\gamma}^*\| &\leq& \frac{ 8\log(V) M (Vf_{\max}+r\xi B) }{\rho V^{c/2}} \nonumber\\
&=& \Theta(V^{1-c/2}\log(V)),
\end{eqnarray}
which implies that at time $t=T_l$,
\begin{eqnarray}
\| \bv{q}(T_l) - \bv{\gamma}^*\| = \Theta(V^{1-c/2}\log(V)).
\end{eqnarray}
Now note that given the same backlog values,  $\mathtt{OLAC2}$ always uses the same actions as LIFO-Backpressure. Using Lemma \ref{lemma:drift-g-tilde} with $T_{\epsilon_s}=0$, we see that at any time $t\geq T_l$, if $\| \bv{q}(t) - \bv{\gamma}^*\|\geq D_p$,  we have:
\begin{eqnarray}
 \expect{\|\bv{q}(t+1) -\bv{\gamma}^* \| \left.|\right. \bv{q}(t)}\leq \|  \bv{q}(t) - \bv{\gamma}^*   \| - \eta. \label{eq:lac2-drift}
\end{eqnarray}
Using Lemma \ref{lemma:exp-time}, we conclude that with probability at least $1- \frac{M}{V^{4\log(V)}}$:
\begin{eqnarray}
\expect{T_{D_p}} &=& O(V^{1-c/2}\log(V)) +T_l\\
&=& O(V^{1-c/2}\log(V) + V^c).
\end{eqnarray}
This completes the proof of Theorem \ref{theorem:lac2-convergence}.
\end{proof}

\section*{Appendix I -- Proof of Lemma \ref{lemma:beta-rate}}
Here we prove Lemma \ref{lemma:beta-rate}.
\begin{proof} (Lemma \ref{lemma:beta-rate})
As in Lemma \ref{lemma:beta-conv}, we have:
\begin{eqnarray}
\hspace{-.3in}&&g(\bv{\beta}, t)= \sum_{s_i}[\pi_{s_i} +\delta_{s_i}(t)] g_{s_i}(\bv{\beta}). \label{eq:dual-separate-0}
\end{eqnarray}
Also, for any $t$,  $g(\bv{\beta}(t), t)\geq g(\bv{\gamma}^*, t)$. By Lemma \ref{lemma:beta-conv}, we see that $w.p.1$,  for all $t\geq T_{\epsilon_s}$,
\begin{eqnarray*}
\hspace{-.3in}&&|g(\bv{\beta}(t), t) -  g(\bv{\beta}(t))| \leq  \max_{s_i}|\delta_{s_i}(t)| M(Vf_{\max}+r\xi B).
\end{eqnarray*}
This implies that:
\begin{eqnarray}
\hspace{-.4in}&&g(\bv{\gamma}^*)-g(\bv{\beta}(t)) \leq 2 \max_{s_i}|\delta_{s_i}(t)| M(Vf_{\max}+r\xi B). \label{eq:distance-rate}
\end{eqnarray}
This is so because if (\ref{eq:distance-rate}) does not hold, then:
\begin{eqnarray*}
\hspace{-.3in}&&g(\bv{\beta}(t), t)- g(\bv{\gamma}^*, t)\\
\hspace{-.3in}&\leq& g(\bv{\beta}(t)) + \max_{s_i}|\delta_{s_i}(t)| M (Vf_{\max}+r\xi B) \\
\hspace{-.3in}&&- \big[g(\bv{\gamma}^*) -  \max_{s_i}|\delta_{s_i}(t)| M (sVf_{\max}+r\xi B)\big]\\
\hspace{-.3in}&<&0, 
\end{eqnarray*}
which contradicts with $g(\bv{\beta}(t), t)\geq g(\bv{\gamma}^*, t)$. 
Now with (\ref{eq:distance-rate}) and (\ref{eq:polyhedral}), i.e., $g(\bv{\gamma}^*) - g(\bv{\beta})\geq \rho \| \bv{\gamma}^* - \bv{\beta}  \|$,  we see that $w.p.1$, for all $t\geq T_{\epsilon_s}=\Theta(1)$,
\begin{eqnarray}
\| \bv{\beta}(t) - \bv{\gamma}^*\|\leq 2 \max_{s_i}|\delta_{s_i}(t)| M(Vf_{\max}+r\xi B)/\rho.
\end{eqnarray}
This completes the proof of  Lemma \ref{lemma:beta-rate}.
\end{proof}

%\section*{Appendix L -- Proof of Theorem \ref{theorem:lac2-convergence-s}}
%The proof of this theorem will be similar to the proof of Theorem \ref{theorem:lac2-convergence}.
%\begin{proof}
%First, as in the we
%\end{proof}

$\vspace{-.2in}$

\end{document}